\def\CC         {{\mathbb C}}
\def\QQ         {{\mathbb Q}}
\def\PP         {{\mathbb P}}
\def\dual	{{\rm \vee}}
\def\dim        {{\rm dim}}
\newtheorem{theorem}{Theorem}[section]
\newtheorem{lemma}[theorem]{Lemma}
\newtheorem{proposition}[theorem]{Proposition}
\newtheorem{corollary}[theorem]{Corollary}
\theoremstyle{definition}
\newtheorem{definition}[theorem]{Definition}
\theoremstyle{remark}
\newtheorem{remark}[theorem]{Remark}
\begin{document}

\title{Stringy $E$-functions of Pfaffian-Grassmannian double mirrors}

\author{Lev Borisov}
\address{Department of Mathematics\\
Rutgers University\\
Piscataway, NJ 08854}
\email{borisov@math.rutgers.edu}

\author{Anatoly Libgober}
\address{Department of Mathematics\\
University of Illinois\\
Chicago, IL 60607}
\email{libgober@math.uic.edu}

\thanks{The first author was partially supported by NSF grant DMS-1201466.
The second author was partially supported by a grant from Simons Foundation.}

\begin{abstract}
We establish the equality of stringy $E$-functions for double mirror Calabi-Yau complete intersections in 
the varieties of skew forms of rank at most $2k$ and at most $n-1-2k$ on a vector space of odd dimension $n$.
\end{abstract}

\maketitle

\section{Introduction}

Mirror symmetry in its classical formulation is the statement that certain quantum field theories defined using different 
Calabi-Yau manifolds differ by a switch between the so-called IIA and IIB twists. This physical (or more precisely string theoretical) phenomenon
implies a vast array of consequences for various invariants of the Calabi-Yau manifolds in question. 

\medskip
In recent years, there has been considerable interest in the so-called double mirror phenomenon, which occurs when two different families of
Calabi-Yau varieties $\{X_\alpha\}$ and $\{Y_\alpha\}$ share the same mirror family. In the majority of known 
cases these Calabi-Yau varieties are simply birational to each other. There are, however, a few instances of non-birational Calabi-Yau double mirror manifolds, of which the oldest and most prominent one is the example of R\o dland, called Pfaffian-Grassmannian correspondence. In this paper we will explore 
the generalization of this example to higher dimensions suggested by Kuznetsov, see  \cite{Kuznetsov2}. We will prove that the stringy $E$-functions
of the expected double mirror varieties coincide.

\medskip
We will now describe the original example of R\o dland.
Let $V$ be a complex vector space of dimension $n=7$ and $W$ be a generic subspace of dimension $7$ of the space $ \Lambda^2 V^\dual$ of skew forms on $V$. To these data one associates a complete intersection Calabi-Yau threefold $X_W\subset
G(2,V)$ and another Calabi-Yau threefold $Y_W\subset \PP W$ which is the locus of degenerate forms. For a generic choice of $W$, these $X_W$ and $Y_W$ are smooth Calabi-Yau threefolds with Hodge numbers $(h^{1,1},h^{1,2}) = (1,50)$. Their double mirror status was first suggested by \cite{Rodland} and then further solidified by \cite{BC,HoriTong,Kuznetsov06}. 

\medskip
Analogous construction works for an arbitrary odd $n\geq 5$. We get two families of  Calabi-Yau varieties $\{X_W\}$ and $\{Y_W\}$
of dimension $(n-4)$ 
and we can try to verify various mathematical consequences of their conjectural double mirror status. The most accessible such 
property is equality of their Hodge numbers. However, for $n\geq 11$, the Pfaffian side $Y_W$ is singular, so its Hodge numbers need to be generalized to stringy Hodge numbers defined in \cite{Batyrev.1}.
The first important result of our paper is the following:

\bigskip
{\bf Theorem \ref{main}.}
For any odd $n\geq 5$ we have the equality of Hodge numbers
$$
h^{p,q}(X_W) = h^{p,q}_{st}(Y_W).
$$

\bigskip
While the result  of Theorem \ref{main} is not particularly surprising, it requires an elaborate calculation which 
involves the log resolution of the Pfaffian variety given in terms of the so-called spaces of complete skew forms \cite{Bertram, Thaddeus}. In the process we end up calculating stringy Hodge numbers of Pfaffian varieties by an inductive argument.

\medskip
There is a way to further generalize the Pfaffian-Grassmannian correspondence which we will now describe.
The Pfaffian-Grassmannian correspondence can be viewed as a particular case of a more general correspondence between Calabi-Yau complete intersections 
$X_W$ and $Y_W$ in dual Pfaffian varieties $Pf(2k,V^\dual)$ and $Pf(n-1-2k,V)$ for a vector space $V$ of odd dimension $n$. Here $Pf(2k,V^\dual)$ is the $k$-th secant variety
of $G(2,V)\subseteq \PP\Lambda^2 V$. We define these varieties $X_W$ and $Y_W$ in Section \ref{sec7} and  eventually prove 
the following, rather more technical result.

\bigskip
{\bf Theorem \ref{main.main}.}
The varieties $X_W$ and $Y_W$ have well-defined stringy Hodge numbers. Moreover, there holds 
$$
h_{st}^{p,q}(X_W)=h_{st}^{p.q}(Y_W).
$$

\bigskip
We chose to discuss the easier case of the Pfaffian-Grassmannian correspondence in more detail, so that the reader can focus on
it first and only then continue to the general case. In Section \ref{sec2} we define the varieties $X_W$ and $Y_W$, prove their basic properties,
and formulate the main result Theorem \ref{main}. We also recall the definition of stringy $E$-functions and discuss the case of 
Zariski locally trivial resolutions. In Section \ref{sec.log} we calculate a log resolution of the Pfaffian variety in terms of the spaces of complete skew forms. It includes a delicate calculation of the discrepancies of the exceptional divisors. Section \ref{sec4} contains an inductive calculation of the stringy  $E$-functions of Pfaffians in odd dimensional spaces. We get a remarkably simple formula 
for it in Theorem \ref{stpf}. Section \ref{proof} finishes the argument by considering projections of the Cayley hypersurface of $X_W$. 
Section \ref{sec.even} describes the analogous construction in the case of even $n$. For even $n$, the varieties $X_W$ and $Y_W$ have different dimensions and can thus be only double mirrors in some generalized sense. Moreover, it appears that the definition of 
stringy $E$-function needs to be adjusted for such generalized double mirrors, since the usual stringy $E$-function does not work.

\medskip
We then proceed with the definitions and arguments for the general case.
In Section \ref{sec7} we define the varieties $X_W$ and $Y_W$, prove their basic properties and formulate the second main result Theorem \ref{main.main}.
Section \ref{sec8} proceeds to prove  Theorem \ref{main.main} modulo some technical results relegated 
to Section \ref{Appendix}. Finally, in Section \ref{sec.last} we make a few concluding remarks with the focus on open questions related to our construction.

\medskip
{\bf Acknowledgements.} L.B. would like to thank Emanuel Diaconescu for stimulating conversations and interest in the project.
We thank Alexander Kuznetsov for multiple helpful comments on the first version of the paper.
We also thank Doron Zeilberger for encouragement and Hjalmar Rosengren for generous help with $q$-hypergeometric identities
of Section \ref{Appendix}. Our interactions with Hjalmar Rosengren were greatly facilitated by the website mathoverflow.net. We used 
the software packages MAPLE and PARI/GP to formulate conjectures, although our proofs do not rely on a computer calculation.

\section{Pfaffian and Grassmannian double mirror Calabi-Yau varieties}\label{sec2}
Let $V$ be an $n$-dimensional complex vector space for an odd $n\geq 5$. 
Let $W\subset \Lambda^2V^\dual$ be a generic $n$-dimensional space of  skew forms on $V$. 
To these data we associate two Calabi-Yau varieties
$X_W$ and $Y_W$ as follows.

\begin{itemize}
\item $X_W$ is a subvariety of the Grassmannian $G(2,V)$ of dimension two subspaces $T_2\subset V$.
It is defined as the locus of  $T_2\in G(2,V)$ with $w\Big\vert_{T_2} = 0$ for all $w\in W$.
\item $Y_W$ is a subvariety of the Pfaffian variety $Pf(V)\subset \PP\Lambda^2 V$ of skew forms on $V$
whose rank is less than $n-1$. It is defined as the intersection of $Pf(V)$ with $\PP W\subset \PP\Lambda^2 V$.
\end{itemize}

\begin{proposition}
For a generic choice of $W$ the variety $X_W$ is a smooth variety of dimension $n-4$ with trivial
canonical class, and the variety $Y_W$ is a variety of dimension $n-4$ with at worst Gorenstein singularities 
and trivial canonical class.
\end{proposition}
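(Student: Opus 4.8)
The plan is to treat $X_W$ and $Y_W$ separately, in each case recognizing the variety as a generic linear section of a well-understood projective variety and then reading off the canonical class from adjunction.

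For $X_W$ I would first reinterpret the definition inside the Pl\"ucker embedding $G(2,V)\subset\PP\Lambda^2V$. Writing $S\subset V\otimes\mathcal O$ for the tautological rank-two subbundle, restriction of forms identifies $W$ with an $n$-dimensional subspace of $H^0(G(2,V),\Lambda^2S^\dual)=H^0(G(2,V),\mathcal O(1))=\Lambda^2V^\dual$, and the condition $w|_{T_2}=0$ is exactly the vanishing of the corresponding section at $T_2$. Hence $X_W=G(2,V)\cap\PP(W^{\perp})$, the linear section cut out by the codimension-$n$ subspace $W^{\perp}\subset\Lambda^2V$ annihilating $W$. Since $\mathcal O(1)$ is very ample, for generic $W$ Bertini's theorem gives that these $n$ sections cut $G(2,V)$ transversally, so $X_W$ is smooth and is a complete intersection of the expected dimension $\dim G(2,V)-n=(2n-4)-n=n-4$ (positive for $n\ge 5$, hence $X_W$ is a connected smooth variety). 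The canonical class then follows from adjunction together with $K_{G(2,V)}=\mathcal O(-n)$:
\[
K_{X_W}=\big(K_{G(2,V)}+n\,\mathcal O(1)\big)\big|_{X_W}=\mathcal O_{X_W}.
\]

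For $Y_W$ the decisive input is the structure of the Pfaffian variety itself. Since $n$ is odd, $Pf(V)\subset\PP\Lambda^2V$ is the locus of corank $\ge 3$ forms, of codimension $\binom{3}{2}=3$. I would use the classical fact that its homogeneous ideal is generated by the $n$ sub-maximal (that is, $(n-1)\times(n-1)$) Pfaffians $p_1,\dots,p_n$ of the generic $n\times n$ skew matrix $M$, each of degree $\tfrac{n-1}{2}$; this ideal is prime of codimension $3$, and by the Buchsbaum--Eisenbud structure theorem for codimension-three Gorenstein ideals, $Pf(V)$ is arithmetically Gorenstein, with $\mathcal O_{Pf(V)}$ resolved by the self-dual complex
\[
0\to \mathcal O(-n)\to \mathcal O(-\tfrac{n+1}{2})^{\oplus n}\xrightarrow{\,M\,}\mathcal O(-\tfrac{n-1}{2})^{\oplus n}\xrightarrow{\,(p_i)\,}\mathcal O\to \mathcal O_{Pf(V)}\to 0 .
\]
Dualizing reads off the dualizing sheaf: $\omega_{Pf(V)}=\mathcal{O}_{Pf(V)}\big(n-\tbinom{n}{2}\big)$. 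Now $Y_W=Pf(V)\cap\PP W$ is treated as a generic linear section. As $\PP W$ has codimension $\binom{n}{2}-n$ and the coordinate ring of $Pf(V)$ is Cohen--Macaulay of dimension $\binom{n}{2}-3$, for generic $W$ the $\binom{n}{2}-n$ defining linear forms form a regular sequence; cutting a Gorenstein ring by a regular sequence of linear forms preserves the Gorenstein property, so $Y_W$ is everywhere locally Gorenstein (in particular it has at worst Gorenstein singularities), of the expected dimension $(n-1)-3=n-4$, and is reduced and irreducible by Bertini for $n\ge 5$. Adjunction for this regular sequence of hyperplanes then gives
\[
\omega_{Y_W}=\Big(\omega_{Pf(V)}\otimes\mathcal O\big(\tbinom{n}{2}-n\big)\Big)\Big|_{Y_W}=\mathcal O_{Y_W},
\]
so $Y_W$ has trivial canonical class.

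I expect the main obstacle to be the precise homological input on the Pfaffian side: identifying the ideal of $Pf(V)$ with the sub-maximal Pfaffian ideal (its primeness and the correct codimension $3$) and using the Buchsbaum--Eisenbud resolution to pin down both the Gorenstein property and the exact twist $n-\binom{n}{2}$ in $\omega_{Pf(V)}$. Once that twist is correct, the Calabi--Yau conclusions for both varieties fall out of adjunction; the genericity of $W$ (a regular sequence of linear forms, transversality for Bertini) is comparatively routine. The one point that genericity cannot circumvent is that for $n\ge 11$ the section necessarily meets ${\rm Sing}\,Pf(V)\cap\PP W$, so $Y_W$ is genuinely singular there and one must rely on the Gorenstein property being preserved under generic sectioning rather than on avoiding the singular strata.
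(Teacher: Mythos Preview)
Your proposal is correct and follows essentially the same route as the paper: for $X_W$ the Pl\"ucker reinterpretation, Bertini, and adjunction with $K_{G(2,V)}=\mathcal O(-n)$; for $Y_W$ the Buchsbaum--Eisenbud resolution of the sub-maximal Pfaffian ideal to establish the Gorenstein property and compute $\omega_{Pf(V)}=\mathcal O(-\tfrac{1}{2}n(n-3))$, followed by adjunction for a generic linear section. The only cosmetic difference is that the paper reads off $\omega_{Pf(V)}$ via $\mathcal Ext^3$ against the canonical of the ambient projective space, whereas you invoke the self-duality of the resolution directly; your twist $n-\binom{n}{2}=-\tfrac{1}{2}n(n-3)$ agrees with theirs.
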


\begin{proof}
To prove the first statement, observe that $X_W$ is the intersection of $n$ generic hyperplanes 
in $\PP\Lambda^2 V$ with the Grassmannian $G(2,V)$ in its Pl\"ucker embedding. This intersection is smooth
by the Bertini theorem and has trivial canonical class by the adjunction formula and the formula for the 
canonical class of $G(2,V)$.

\smallskip
To prove the second statement, we recall the results of  \cite{BE}. The variety $Pf(V)$ is of codimension three
in $\PP \Lambda^2 V^\dual$. The resolution of the pushforward of the  structure sheaf
 $i_*\mathcal O_{Pf(V)}$ in $\PP \Lambda^2 V^\dual$ is  given by the powers of the universal skew form
 as
 \begin{equation}\label{Beres}
0\to {\mathcal O}(-n)
\longrightarrow
{ \mathcal O}(-\frac {n+1}2)^{\oplus n}
\longrightarrow
{\mathcal O}(-\frac {n-1}2)^{\oplus n}
\longrightarrow
{\mathcal O} 
\longrightarrow
i_*{\mathcal O}_{Pf(V)}\to 0.
\end{equation}
Thus, the variety is Gorenstein and by \cite{Hartshorne} we have 
$$i_*K_{Pf(V)}={\mathcal Ext}^3(i_*{\mathcal O}_{Pf(V)},\mathcal O(-\frac 12n(n-1))).$$
The dual of the first map in \eqref{Beres} is up to a twist the third map, so we see
$i_*K_{Pf(V)} = i_*{\mathcal O}_{Pf(V)}(-\frac 12 n(n-3))$
and 
\begin{equation}\label{canPf}
K_{Pf(V)} = i^*{\mathcal O}(-\frac 12 n(n-3)).
\end{equation}
Since $\PP W$ is a generic subspace of codimension $\frac 12 n(n-3)$ in $\PP\Lambda^2V^\dual$, the
Bertini theorem and adjunction for Gorenstein varieties finishes the proof.
\end{proof}

\begin{remark}
Dimension counts show that $Y_W$ is smooth for $n\leq 9$ and is singular thereafter.
\end{remark}

\begin{remark} 
It is easy to show that $X_W$ and $Y_W$ are Calabi-Yau varieties in the strict sense, namely,
that 
$$
H^i(X_W,{\mathcal O})=H^i(Y_W,{\mathcal O})=0
$$
for all $i=1,\ldots, n-5$.
This follows from the Lefschetz hyperplane theorem in $X_W$ case and the exact sequence \eqref{Beres} in the $Y_W$ case.
\end{remark}

\bigskip
Our interest in the varieties $X_W$ and $Y_W$ stems from the observation of R\o dland \cite{Rodland}
that for $n=7$ the corresponding families of  Calabi-Yau threefolds have the same mirror family. 
From the physicists' point of view, this corresponds to the statement that the string theories with 
targets $X_W$ and $Y_W$ can be obtained from one another by analytic continuation in  the K\"ahler
parameter space. We will not pretend to have a full understanding of the physical meaning of this claim
but will instead refer interested readers to \cite{HoriTong} for more details. We refer to pairs of such varieties as
\emph{double mirror} to each other, as indicated in the title of this section.

\medskip
Double mirror Calabi-Yau manifolds are expected to be intimately related to each other. In particular, 
their Hodge numbers are expected to be the same. In addition, one expects that the bounded 
derived categories of coherent sheaves on $X_W$ and $Y_W$ are equivalent. This, indeed, 
has been verified independently in \cite{BC} and \cite{Kuznetsov06}  in $n=7$ case, thus providing a 
rare example of non-birational derived equivalent manifolds.

\medskip
It is reasonable to conjecture that $X_W$ and $Y_W$ are double mirror to each other for arbitrary $n$.
The mathematical consequences of this statement undoubtedly need to be adjusted due to the presence 
of singularities in $Y_W$. The string theory corrections due to the singularities are not fully understood, however,
there is a robust definition of \emph{stringy} Hodge numbers of singular varieties, due to Batyrev \cite{Batyrev.1}.
With this in mind, it becomes natural to conjecture and then prove the following result, which is the main
focus of this paper.  The proof of the theorem is postponed until Section \ref{proof}.

\begin{theorem}\label{main}
For any odd $n\geq 5$ we have the equality of Hodge numbers
$$
h^{p,q}(X_W) = h^{p,q}_{st}(Y_W).
$$
\end{theorem}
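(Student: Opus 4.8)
The plan is to compute both sides of the equality $h^{p,q}(X_W) = h^{p,q}_{st}(Y_W)$ via their generating functions, namely the ordinary $E$-polynomial $E(X_W;u,v)=\sum_{p,q}(-1)^{p+q}h^{p,q}(X_W)u^pv^q$ on the left and the stringy $E$-function $E_{st}(Y_W;u,v)$ on the right, and to show these two polynomials coincide. Since $X_W$ is smooth and projective of dimension $n-4$, its Hodge numbers are directly encoded in $E(X_W)$; since $Y_W$ is Gorenstein with at worst canonical singularities, its stringy Hodge numbers are read off from $E_{st}(Y_W)$ once we know this is a polynomial. The equality of the two $E$-functions is therefore the sharp statement to target, and the Hodge-number equality follows by comparing coefficients.

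First I would set up the computation of the right-hand side using the log resolution of the Pfaffian variety $Pf(V)$ constructed in Section \ref{sec.log} in terms of the spaces of complete skew forms of \cite{Bertram, Thaddeus}, together with the discrepancy calculation carried out there. Pulling back to $Y_W = Pf(V)\cap \PP W$ and using the fact that $\PP W$ is a generic linear section of the expected codimension, one stratifies $Y_W$ according to the rank of the skew form and expresses $E_{st}(Y_W)$ as a sum over the strata, each weighted by the appropriate discrepancy factor $\prod_i \frac{uv-1}{(uv)^{a_i+1}-1}$. The inductive formula for the stringy $E$-functions of Pfaffians in odd-dimensional spaces, obtained in Theorem \ref{stpf}, is the key input that makes these stratum contributions explicit and closed-form, and in particular guarantees that $E_{st}(Y_W)$ is a genuine polynomial, so that stringy Hodge numbers are well-defined.

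Second I would compute the left-hand side. Since $X_W$ is a smooth complete intersection of $n$ Plücker hyperplanes in $G(2,V)$, its $E$-polynomial can in principle be extracted from the cohomology of $G(2,V)$ and the normal bundle data, but the efficient route—and the one the introduction signals—is to relate $X_W$ directly to $Y_W$ by a geometric correspondence. The idea is to consider the incidence variety, the Cayley hypersurface associated to $X_W$, and its two projections: one projection realizes a space fibered over $X_W$ (with controlled, Zariski-locally-trivial fibers, so that the $E$-function multiplies by the class of the fiber as recalled in Section \ref{sec2}), while the other projection maps onto a space built from $Y_W$. Matching the two resulting expressions for the $E$-function of the Cayley hypersurface, and cancelling the common fiber factors, yields $E(X_W) = E_{st}(Y_W)$ directly.

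The hard part will be the discrepancy and stratification bookkeeping on the Pfaffian side: obtaining a clean closed form for $E_{st}(Pf(V))$ via the space of complete skew forms requires correctly identifying the exceptional divisors of the log resolution, computing each discrepancy $a_i$ exactly (the delicate step flagged in the introduction), and then summing the resulting geometric series over all strata in a way that telescopes into the simple formula of Theorem \ref{stpf}. A secondary obstacle is verifying that the generic linear section $\PP W$ meets every stratum transversally and in the expected dimension, so that the Bertini-type arguments legitimately transport the resolution and its discrepancies from $Pf(V)$ to $Y_W$ without altering the combinatorics; once this transversality is secured and the Cayley-hypersurface double projection is in place, matching coefficients of $u^pv^q$ on the two sides completes the proof.
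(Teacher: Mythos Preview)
Your proposal is correct and follows essentially the same route as the paper: compute $E_{st}(Y_W)$ by stratifying $Y_W$ by rank and feeding in the log resolution discrepancies from Section~\ref{sec.log} together with the closed formula of Theorem~\ref{stpf}, compute $E(X_W)$ via the Cayley hypersurface $H\subset G(2,V)\times\PP W$ using its two projections, and match the resulting stratum-by-stratum expressions. The only minor sharpening needed is that the first projection of $H$ is over all of $G(2,V)$ (with fibers $\PP^{n-1}$ over $X_W$ and $\PP^{n-2}$ over its complement), not just over $X_W$, and the second projection requires a frame-bundle trick (Lemma~\ref{fk1}) to justify multiplicativity of $E$-functions over the rank strata $P_k\subset\PP W$, since Zariski local triviality is not asserted there.
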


\begin{remark}
Stringy Hodge numbers  coincide with usual Hodge numbers 
in the smooth case, so $h^{p,q}_{st}(X_W)=h^{p,q}(X_W)$. However, the variety $Y_W$ is singular for $n\geq 11$, and the statement would fail without this correction. 
\end{remark}

\medskip
In the rest of this section we recall the definition of stringy Hodge numbers of singular varieties with log-terminal singularities following
\cite{Batyrev.1} and describe the  case of Zariski locally trivial log resolutions
which will be an important technical tool in our study of stringy Hodge numbers of $Y_W$.

\medskip
Let $Y$ be a singular variety with log-terminal singularities. Let $\pi:\widehat Y \to Y$ be a log resolution of $Y$, i.e. a proper birational morphism from a smooth variety $\widehat Y$ such that the exceptional divisor $\bigcup_{i=1}^kD_i$ has simple normal crossings. It is assumed that $Y$ is $\QQ$-Gorenstein, 
which allows us to compare the canonical classes
$$
K_{\widehat Y}\equiv \pi^*K_Y + \sum_{i=1}^{k}\alpha_iD_{i}
$$
to define the discrepancies $\alpha_i$. The discrepancies satisfy $\alpha_i>-1$ by log-terminality assumption (see \cite{CKM}).
Recall that for any variety  $W$ (not necessarily projective) we can define the Hodge-Deligne polynomial $E(W;u,v)$ which 
measures the alternating sum of dimensions  
of the $(p,q)$ components of the mixed Hodge structure on the cohomology 
of $W$ with compact support \cite{DKh}.
 For a possibly empty subset $J$ of $\{1,\ldots,k\}$ we define by 
$D_J^\circ$ the locally closed subset of $\widehat Y$ which consists of points $z\in \widehat Y$ that lie in $D_j$ if and only 
if $j\in J$.

\begin{definition}\label{def.Est}(\cite{Batyrev.1})
Stringy $E$-function of $Y$ is defined by 
$$
E_{st}(Y;u,v):=\sum_{J\subseteq \{1,\ldots , k\}} E(D_J^\circ;u,v) \prod_{j\in J} \frac {uv-1}{(uv)^{\alpha_j+1}-1}.
$$
Thus defined $E_{st}(Y;u,v)$ does not depend on the choice of the log resolution of $Y$, which justifies the notation.
It is in general only a rational function in fractional powers of $u,v$. 
However, in the case when $E_{st}(Y;u,v)$ is polynomial in $u,v$, we use 
$$
E_{st}(Y;u,v) = \sum_{p\geq 0,q\geq 0} (-1)^{p+q}h^{p,q}_{st}(Y) u^pv^q
$$
to define stringy Hodge numbers $h^{p,q}_{st}(Y)$.
\end{definition}

\begin{remark}
It is not clear under what conditions the stringy Hodge numbers exist. Even in three-dimensional Gorenstein case 
one can have $E$-functions with nontrivial denominators, see \cite{DR}. So 
the existence of stringy Hodge numbers of $Y_W$ is not known a priori. Rather, it is  a consequence of our calculation of its stringy $E$-function.
\end{remark}

\medskip
In a number of cases,  the log resolution of singularities $\widehat Y\to Y$ has an additional property of having the open strata $D_J^\circ$  form Zariski locally trivial fibrations over the corresponding
strata in $Y_W$.  This is the case when $Y$ has isolated singularities, but it also occurs more generally.
Notably, this  happens in the case of generic hypersurfaces and complete intersections in toric 
varieties,  as well as in the case considered in this paper. We discuss this phenomenon below.

\begin{definition}
We call a log resolution $\pi:\widehat Y\to Y$ as above \emph{Zariski locally trivial} if each $D_J^\circ$ is a Zariski locally trivial fibration over its image $\pi(D_J^\circ)$ in $Y$. 
\end{definition}

\begin{definition}\label{s}
Suppose that a singular variety $Y$ admits a Zariski locally trivial log resolution
$\pi:\widehat Y\to Y$. For a point $y\in Y$ define the local contribution of $y$ to $E_{st}(Y;u,v)$ to
be 
$$
S(y;u,v):=\sum_{J\subseteq \{1,\ldots , k\}} E(D_J^\circ\cap \pi^{-1}(y);u,v) \prod_{j\in J} \frac {uv-1}{(uv)^{\alpha_j+1}-1}.
$$
\end{definition}

\begin{remark}
Thus defined $S(y;u,v)$ is a constructible function on $Y$ with values in the field of rational functions in fractional powers of $u$ and $v$. Indeed, if $y_1$ and $y_2$ are such that the set of $J$ with $\pi(D_J^{\circ})$ that contain $y_1$ is the same as those that contain $y_2$, then $S(y_1;u,v)=S(y_2;u,v)$.
Moreover, this function is independent from the choice of a Zariski locally trivial resolution. This follows from
the usual argument that involves weak factorization theorem \cite{AKMW}. Last but not least, there holds
\begin{equation}\label{Zarlogtriv}
E_{st}(Y;u,v) = \sum_{i} E(Y_i;u,v) S(y\in Y_i;u,v)
\end{equation}
where $Y=\bigsqcup_i Y_i$ is the stratification of $Y$ into the sets on which $S$ is constant. This follows immediately 
from the multiplicativity of Hodge-Deligne $E$-functions for Zariski locally trivial fibrations, see \cite{DKh}.
\end{remark}

\section{Log resolutions and discrepancies of Pfaffian varieties}\label{sec.log}
In this section we use the classical spaces of complete skew forms to construct  log resolutions of Pfaffian varieties. We describe these spaces in detail. In particular, we calculate the discrepancies of the exceptional divisors, which are needed for the subsequent calculations of stringy $E$-functions. It is a rather delicate calculation based on the interplay between the spaces of complete skew forms on even and odd dimensional spaces.

\medskip
Let $V$ be a complex vector space of dimension $n\geq 3$. For now we do not assume that $n$ is odd.
 Consider 
the space $\PP \Lambda^2V^\dual$ of nontrivial skew forms on $V$ up to scaling. The loci of 
forms of corank $k\geq 0$ are locally closed smooth subvarieties of $\PP \Lambda^2V^\dual$ of 
codimension $\frac 12 k(k-1)$.
Note that the rank $n-k$ is always even. In particular, when $n$ is even, $\PP \Lambda^2V^\dual$ has 
an open stratum for $k=0$, a codimension one stratum 
for $k=2$ given by the vanishing of the Pfaffian of the form, as well as lower dimensional strata if $n$
is large enough. When $n$ is odd, there is an open stratum for $k=1$, a codimension three stratum 
for $k=3$, and possibly lower dimensional strata.

\medskip
For $n$ large enough, the closures of the strata in the above stratification of $\PP \Lambda^2V^\dual$ 
are singular. The space of complete skew forms provides a log resolution of this stratification. It is 
described in the following proposition.

\begin{proposition}\label{BT}
Consider the successive blowups of the loci of forms of rank $2$ in $\PP \Lambda^2V^\dual$,
then the proper preimage of the locus of forms of rank $4$, etc.
At each stage the center of the blowup is smooth, so all of the blowups are smooth. The resulting 
space of \emph{complete skew forms} is a smooth variety $\widehat{\PP \Lambda^2V^\dual}$ 
which parameterizes the (possibly trivial) flags
$$
0\subseteq F^0 \subset \cdots \subset F^l = V
$$
with $F^0$ of dimension $0$ if $n$ is even and $1$ if $n$ is odd, 
together with \emph{nondegenerate} forms $\CC w_i \in \PP \Lambda^2(F^{i+1}/F^i)^\dual$.
The map to $\widehat{\PP \Lambda^2V^\dual}\to {\PP \Lambda^2V^\dual}$ is given by 
interpreting a skew form on $F^{l}/F^{l-1}$ as a skew form on $F^l=V$.
\end{proposition}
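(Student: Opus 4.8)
The plan is to argue by induction on $n=\dim V$, the geometric heart of the matter being a local analysis of the very first blowup, along the locus $Z_1$ of rank-$2$ forms. This $Z_1$ is the Pl\"ucker-embedded Grassmannian $G(2,V)$ of decomposable forms, hence smooth and closed (a limit of decomposable forms is decomposable). To understand it locally, I would fix a rank-$2$ form $\omega_0$, set $K=\ker\omega_0$ of dimension $n-2$, choose a complement to $K$, and write a nearby form in block shape $\omega=\bigl(\begin{smallmatrix}A&B\\-B^{\top}&C\end{smallmatrix}\bigr)$, where $A$ is the invertible restriction to the complement. Passing to the Schur complement $S$ of $A$, a skew form on $K$ of the shape $C$ minus a term quadratic in $B$, one has the exact identity $\rk\omega=2+\rk S$. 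Since $S=C+(\text{terms in }B)$, the functions $S_{ij}$ triangularly replace the coordinates $C_{ij}$; thus $(B,S)$ is a local coordinate system, $Z_1=\{S=0\}$ with the $B$-coordinates parameterizing $Z_1$, so $Z_1$ is smooth with projectivized normal bundle having fiber $\PP\Lambda^2K^\dual$ over $[\omega_0]$.

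This normal form is exactly what makes the induction run. Near $\omega_0$ the rank-$\le 2r$ locus is $\{\rk S\le 2r-2\}$, a condition on the transverse coordinates $S$ alone; hence after blowing up $Z_1$ the proper transform of the rank-$\le 2r$ locus meets the exceptional fiber $\PP\Lambda^2K^\dual$ precisely in the rank-$\le 2r-2$ locus there. Therefore the remaining blowups in $V$, along the proper transforms of the rank-$4,6,\dots$ loci, are performed fiber by fiber over $G(2,V)$, and on each fiber $\PP\Lambda^2K^\dual$ they are exactly the successive blowups of the rank-$2,4,\dots$ loci that define the space of complete skew forms on $K$.

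Since $\dim K=n-2<n$, the inductive hypothesis now applies fiberwise: it produces $\widehat{\PP\Lambda^2K^\dual}$, smooth and with the asserted flag parameterization, and with smooth intermediate centers. As $K$ varies in the tautological kernel bundle $\mathcal K$ over $G(2,V)$ and the construction is natural in $K$, these facts globalize in a Zariski locally trivial way over $G(2,V)$; this yields simultaneously that each blowup center is smooth and that the total space $\widehat{\PP\Lambda^2V^\dual}$ is smooth. The base cases with small $n$ (no blowup once $\dim V\le 3$, a single blowup for $n=4$) are immediate.

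Finally I would read off the parameterization and the map. A point over the open stratum is a corank-$\le 1$ form, encoded by the flag $0\subseteq F^0\subset F^1=V$ with $w_0$ the induced nondegenerate form on $V/F^0$, where $F^0$ is $0$ for $n$ even and the one-dimensional kernel for $n$ odd. A point of the exceptional divisor records a rank-$2$ form $\omega$, i.e. the point $[\omega]\in G(2,V)$ with $K=\ker\omega$, together with, by induction, complete skew form data on $K$: a flag $0\subseteq F^0\subset\cdots\subset F^{l-1}=K$ and nondegenerate forms on its quotients. Appending $F^l=V$ and the rank-$2$ form $w_{l-1}$ on $V/K$ gives the full flag and forms, and the composite blowdown retains precisely the top form $w_{l-1}$ on $F^l/F^{l-1}$, interpreted on $V$, as stated. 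The main obstacle is the passage of the second paragraph: one must verify that the proper transforms of all higher rank loci stay smooth and meet the exceptional divisor fiberwise in the lower rank loci, so that the construction on $V$ reduces exactly to that on the kernel bundle $\mathcal K$. The cleanness of this reduction rests on the Schur complement furnishing honest transverse coordinates $S$ whose vanishing is $Z_1$ and on which the entire rank stratification depends, which is the delicate point in the Bertram--Thaddeus analysis.
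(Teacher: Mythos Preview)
The paper itself gives no proof here; it simply cites Bertram and Thaddeus. Your Schur complement plus induction on $\dim V$ is exactly the method of those references, so in spirit you are supplying what the paper omits.

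There is, however, a genuine gap in how you organize the induction. Your local analysis at a rank-$2$ form is correct: in coordinates $(B,S)$ with $S$ the Schur complement, one has $Z_1=\{S=0\}$, the normal fiber is $\Lambda^2K^\dual$, and the proper transform of the rank-$\le 2r$ locus meets the exceptional fiber in the rank-$\le 2r-2$ locus. What does \emph{not} follow is your assertion that ``the remaining blowups in $V$ \dots\ are performed fiber by fiber over $G(2,V)$.'' This is only true on a neighbourhood of the first exceptional divisor $E_1$. The later centers extend away from $E_1$: for instance, the proper transform of the rank-$\le 6$ locus has points over forms of rank exactly $4$, lying on $E_2\setminus E_1$, and these are not accounted for by the fibration over $G(2,V)$ or by the induction on $K=\CC^{n-2}$. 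The same issue recurs in your description of the flag parameterization, which you verify only on the open stratum and on $E_1$.

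The fix is straightforward and uses the same idea. At a point $p$ of the $r$-th center, let $2s$ be the rank of its image in $\PP\Lambda^2V^\dual$. If $s=r$ none of the previous blowups touched a neighbourhood of $p$ and the rank-$\le 2r$ locus is already smooth there. If $s<r$, repeat the Schur complement analysis at a rank-$2s$ form, with kernel $K'$ of dimension $n-2s$; near the exceptional divisor $E_s$ (away from $E_1,\dots,E_{s-1}$) the remaining construction is, locally on $Z_s$, a product of an affine space with the complete skew form construction on $K'=\CC^{n-2s}$, and induction on $n$ applies. So you should either run the Schur complement at every rank, covering the blowup by neighbourhoods of the successive exceptional divisors, or set up a double induction on $n$ and on the number of blowups. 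Either way the argument goes through; it is only your claim of a global fibration over $G(2,V)$ that needs to be replaced by this local covering.
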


\begin{proof} See \cite{Bertram, Thaddeus}. 
\end{proof}

\begin{remark}
It is worth pointing out that in the statement of Proposition \ref{BT} the length of the flag $l$ varies
from point to point. For a closed point in $\widehat{\PP \Lambda^2V^\dual}$ in general position we have $l=1$,
and the image in ${\PP \Lambda^2V^\dual}$ is a form of rank $n-3$.
\end{remark}

\begin{proposition}\label{disc}
The exceptional divisors of $\widehat{\PP \Lambda^2V^\dual}\to {\PP \Lambda^2V^\dual}$ 
are described by requiring that a subspace of certain dimension is present in the flag $F^\bullet$.
They form a simple normal crossing divisor on 
$\widehat{\PP \Lambda^2V^\dual}$.
The generic point of the divisor
that corresponds to the subspace of dimension $k$ 
maps to the generic point of the locus of
forms of corank $k$. The discrepancy of the corresponding divisor is $\frac 12k(k-1)-1$.
\end{proposition}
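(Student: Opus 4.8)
The plan is to reduce the whole computation to a local model near a generic point of the corank-$k$ stratum, since the discrepancy of a divisor depends only on a neighborhood of its generic point. So I would fix a skew form $w_0$ of corank exactly $k$, choose a splitting $V = R \oplus U$ with $R=\ker w_0$ of dimension $k$ and $U$ of dimension $n-k$, and record that $w_0|_U$ is nondegenerate (the rank $n-k$ is always even, so this makes sense for all $n$). Writing a nearby form in block form
$$
w=\begin{pmatrix} A & B \\ -B^\dual & C\end{pmatrix},
$$
with $A\in\Lambda^2R^\dual$ and $C\in\Lambda^2U^\dual$ close to $w_0|_U$ (hence invertible), the first step is the skew Schur-complement reduction: since $C$ is nondegenerate on the even-dimensional space $U$, the corank of $w$ equals $k-\rk S$, where $S=A+BC^{-1}B^\dual\in\Lambda^2R^\dual$ is skew. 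Trading the coordinate $A$ for $S$ gives local coordinates $(S,B,C)$ in which the corank-$k$ locus is exactly the smooth coordinate subspace $\{S=0\}$ of codimension $\dim\Lambda^2R^\dual=\frac12k(k-1)$.

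The key observation, which I would establish next, is that in this neighborhood there are no forms of corank larger than $k$: because $C$ stays invertible, $\rk S\geq 0$ forces the corank to be at most $k$, with equality precisely on $\{S=0\}$. Consequently all of the deeper strata (coranks $k+2,k+4,\dots$), which are blown up at earlier stages in the construction of $\widehat{\PP\Lambda^2V^\dual}$, are disjoint from this neighborhood. Therefore, near the generic point of the divisor over the corank-$k$ stratum, the morphism $\widehat{\PP\Lambda^2V^\dual}\to\PP\Lambda^2V^\dual$ is simply the blowup of the smooth center $\{S=0\}$, whose exceptional divisor I identify with the projectivized normal bundle $\PP(\Lambda^2R^\dual)$, the parameter space of skew forms on the $k$-dimensional radical $R$. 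This is the source of the recursive, parity-sensitive structure: the always-even complement $U$ enters only through $C^{-1}$, while $R$ (of dimension $k$, of the same parity as $n$) carries the Pfaffian geometry that is resolved further, but only away from the generic point. The standard formula that the blowup of a smooth center of codimension $c$ has discrepancy $c-1$ then yields the claimed value $\frac12k(k-1)-1$, with no correction terms precisely because no earlier exceptional divisor passes through the generic point of the center.

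It remains to record the two structural assertions. The simple-normal-crossing property, and the fact that the exceptional divisors are cut out by requiring a subspace of a prescribed dimension in the flag, follow from the description of $\widehat{\PP\Lambda^2V^\dual}$ as an iterated blowup along smooth centers in Proposition \ref{BT} (see \cite{Bertram, Thaddeus}): intersecting several of these divisors corresponds to requiring several subspaces of prescribed dimensions, i.e. to passing to a deeper flag, which is exactly the transverse local picture. Likewise, the generic point of the divisor attached to a subspace of dimension $k$ carries a flag $F^0\subset F^1\subset V$ with $\dim F^1=k$, whose image form has radical $F^1$ and hence corank $k$; letting the data vary shows the image is a generic corank-$k$ form.

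I expect the main obstacle to be the rigorous justification of the local model: verifying that the proper transform of the corank-$k$ locus remains smooth of codimension $\frac12k(k-1)$ at the stage it is blown up, and that the space of complete skew forms genuinely coincides, in the relevant neighborhood, with the naive blowup of $\{S=0\}$. This is where the interplay between the even-dimensional complement $U$ and the radical $R$ must be handled with care, and where one must confirm that the recursive resolution of $\PP(\Lambda^2R^\dual)$ introduces new divisors only away from the generic point of the divisor, so that it cannot alter the discrepancy computed above.
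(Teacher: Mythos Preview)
Your proposal is correct and follows the same approach as the paper. The paper's proof is a one-liner: it cites \cite{Bertram} for the simple normal crossing property and states that the discrepancies come from the codimension $\tfrac12 k(k-1)$ of the smooth center at the relevant blowup. Your Schur-complement local model makes explicit what the paper leaves implicit, namely that near a generic corank-$k$ point no deeper stratum is present, so the earlier blowups are irrelevant and the standard formula (discrepancy $=$ codimension $-1$) applies directly.
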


\begin{proof}
From the iterated blowup construction we see that the exceptional divisor is a simple normal
crossing divisor (see \cite{Bertram}). The description of it at the set-theoretic level is clear as well.
The discrepancies are calculated based on the codimension of the (smooth) locus of the corresponding
blowup.
\end{proof}

\medskip
A slight variation of this construction for odd $n$ produces a log resolution of the Pfaffian variety of degenerate forms on $V$.
\begin{proposition}
For odd $n$ consider the sequence of iterated blowups of (proper preimages) of loci of forms 
of rank $2$, $4$, and so on, up to $n-5$ in $\PP\Lambda^2 V^\dual$. Then consider the proper preimage
 $\widehat{Pf(V)}$
of the locus $Pf(V)$ of forms of corank at least $3$ on $V$. Then $\widehat{Pf(V)}\to Pf(V)$
is a log resolution of $Pf(V)$. Points of   $\widehat{Pf(V)}$ are given by collections of flags
$$
0\subset F^0 \subset \cdots \subset F^l = V
$$
with $F^0$ of dimension $3$ 
together with \emph{nondegenerate} forms $\CC w_i \in \PP \Lambda^2(F^{i+1}/F^i)^\dual$.
The exceptional divisors $D_j$ of the map $\widehat{Pf(v)}\to Pf(V)$ are indexed by $j=3,\ldots, \frac {n-1}2$
and are characterized by the existence of a subspace of dimension $2j-1$ in the above flag.
\end{proposition}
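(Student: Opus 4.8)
The plan is to realize $\widehat{Pf(V)}\to Pf(V)$ as the restriction to the Pfaffian locus of the complete skew forms construction of Proposition \ref{BT}, stopped one blowup early. First I would observe that the centers of the blowups --- the proper preimages of the loci of forms of rank $2,4,\ldots,n-5$ --- are all contained in $Pf(V)$, since these are forms of corank $n-2,n-4,\ldots,5$, all at least $3$. Consequently, taking the proper transform of $Pf(V)$ under the ambient blowups is the same as iteratively blowing up $Pf(V)$ along the proper transforms of its singular strata, and the local analysis of Bertram and Thaddeus \cite{Bertram,Thaddeus} applies directly: each successive center is smooth, so every blowup is smooth, and the accumulated exceptional locus is a simple normal crossing divisor.

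Next I would check that $\widehat{Pf(V)}$ is smooth and that the map is a log resolution. The singular locus of $Pf(V)$ is exactly the union of its deeper strata, the forms of corank at least $5$ (equivalently of rank at most $n-5$), while the open stratum of corank exactly $3$ is smooth and the map is an isomorphism over it. Blowing up ranks $2,4,\ldots,n-5$, starting from the deepest, resolves precisely these singular strata by the same computation that underlies Proposition \ref{BT}; after the final blowup the proper transform of $Pf(V)$ meets the accumulated exceptional divisors transversally, so $\widehat{Pf(V)}$ is smooth and $\widehat{Pf(V)}\to Pf(V)$ is a log resolution.

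For the moduli interpretation I would track, blowup by blowup, how the kernel of a degenerate form is recorded, exactly as in the complete skew forms picture, but noting that we omit the final step. In the full construction of Proposition \ref{BT} the recorded flag descends to a subspace $F^0$ of dimension $1$; because we do not blow up along the rank $n-3$ (corank $3$) locus, the finest kernel recorded over a generic point of $Pf(V)$ has dimension $3$, and this becomes $F^0$. Thus every point of $\widehat{Pf(V)}$ carries a flag $0\subset F^0\subset\cdots\subset F^l=V$ with $\dim F^0=3$ together with nondegenerate forms $\CC w_i\in\PP\Lambda^2(F^{i+1}/F^i)^\dual$, the map sending it to the form on $F^l/F^{l-1}$ read as a form on $V$ with kernel $F^{l-1}\supseteq F^0$, hence of corank at least $3$. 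Finally, the divisor produced by the blowup of the corank-$(2j-1)$ stratum is the closure of the locus where the flag acquires a subspace of dimension $2j-1$; as $j$ runs over $3,\ldots,\frac{n-1}2$ these coranks run over $5,7,\ldots,n-2$ (ranks $n-5$ down to $2$), giving exactly the divisors $D_j$, while the generic dimension-$3$ kernel $F^0$ is present everywhere and contributes no divisor, consistent with the index starting at $j=3$. The main obstacle is this moduli step: one must verify through the local model of the blowups near $Pf(V)$ that stopping at rank $n-5$ produces precisely $\dim F^0=3$ and that no further blowup is needed to resolve $Pf(V)$, which is in effect a relative version of the Bertram--Thaddeus analysis.
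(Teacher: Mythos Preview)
Your proposal is correct and takes essentially the same approach as the paper, which simply invokes Proposition \ref{BT}: the smoothness of $\widehat{Pf(V)}$ is precisely the assertion that the next blowup center in the Bertram--Thaddeus construction is smooth, and the set-theoretic description follows immediately. Your write-up is a faithful unpacking of that reference; the one place where you could be sharper is the smoothness argument, where rather than appealing to transversality with the exceptional divisors you can observe directly that after the blowups at ranks $2,4,\ldots,n-5$ the proper transform of $Pf(V)$ \emph{is} the next center in the sequence of Proposition \ref{BT}, hence smooth by that proposition.
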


\begin{proof}
The smoothness is part of the statement of Proposition \ref{BT}. The set-theoretic description of the space is also clear.
\end{proof}

\begin{remark}
We can also identify the space $\widehat{Pf(V)}$ with the \emph{relative} space of complete skew forms
(see \cite{Bertram}) on the rank $n-3$ universal quotient bundle $Q_3$ over the Grassmannian $G(3,V)$. Indeed, the map to 
$\PP \Lambda^2 Q_3^\dual$ is given by sending the form with the above data to $F^0$ and
the skew form on $F^l/F^0=V/F^0$.
\end{remark}

The canonical class of $Pf(V)$ has been calculated in \eqref{canPf}.
We will now calculate the discrepancies of the map $\pi:\widehat{Pf(V)}\to Pf(V)$.
\begin{theorem}\label{discpf}
We have the following relation in the Picard group of $\widehat{Pf(V)}$.
$$
K_{\widehat{Pf(V)}} = \pi^*K_{Pf(V)} + \sum_{j=3}^{\frac 12(n-1)} (2j^2-5j+1) D_j.
$$
In particular, $Pf(V)$ has terminal Gorenstein singularities.
\end{theorem}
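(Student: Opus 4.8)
The plan is to compute the discrepancies $\alpha_j$ of the exceptional divisors $D_j$ by relating the log resolution $\widehat{Pf(V)}\to Pf(V)$ of the Pfaffian variety to the full space of complete skew forms $\widehat{\PP\Lambda^2 V^\dual}\to\PP\Lambda^2V^\dual$, whose discrepancies are already recorded in Proposition~\ref{disc}. The key point is that both resolutions are built from the same iterated blowups of the rank loci, so their exceptional divisors should coincide on a common open set; the difference between the two discrepancy formulas will come from the difference between the canonical class of the ambient projective space $\PP\Lambda^2V^\dual$ and the canonical class $K_{Pf(V)}=i^*\mathcal O(-\frac12 n(n-3))$ computed in~\eqref{canPf}.

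Let me recall the input from Proposition~\ref{disc}: in the resolution of the ambient $\PP\Lambda^2V^\dual$, the divisor indexed by a corank-$k$ subspace has discrepancy $\frac12 k(k-1)-1$, arising because that divisor is obtained by blowing up a smooth center of codimension $\frac12 k(k-1)$. For the Pfaffian resolution the divisors are indexed by $j=3,\ldots,\frac{n-1}2$, where $D_j$ records the presence of a subspace of dimension $2j-1$ in the flag, i.e. corank $k=2j-1$. Substituting $k=2j-1$ into $\frac12 k(k-1)$ gives $\frac12(2j-1)(2j-2)=(2j-1)(j-1)=2j^2-3j+1$, which is exactly the codimension of the corank $2j-1$ stratum inside $\PP\Lambda^2V^\dual$. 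The target discrepancy $2j^2-5j+1$ differs from $(2j^2-3j+1)-1 = 2j^2-3j$ by $-2j$, so the naive transfer of the ambient computation is off, and the correction must be accounted for carefully.

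The way I would carry this out is to perform a local discrepancy computation directly on the Pfaffian resolution rather than importing the ambient numbers wholesale. Concretely, I would stratify the blowup process: the $j$-th exceptional divisor $D_j$ is produced when blowing up (the proper transform of) the locus of corank $2j-1$ forms inside $Pf(V)$, and I would compute both the codimension $c_j$ of that smooth center \emph{inside the Pfaffian variety} (not inside $\PP\Lambda^2V^\dual$) and the discrepancy contribution $c_j - 1$ of the blowup, then add the contributions that $D_j$ picks up from being a proper transform under the \emph{earlier} blowups. The discrepancy of a single smooth blowup of codimension $c$ is $c-1$, but because the centers are nested (the corank $2j-1$ locus sits inside the closure of all higher-corank loci), each $D_j$ accumulates pullback corrections from the divisors $D_{j'}$ with $j'<j$ that were created earlier; tracking these nested contributions through the iterated blowup is the technical heart of the argument.

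\emph{The hard part} will be precisely this bookkeeping of how the proper transform of each rank locus changes codimension and picks up multiplicities of previously-created exceptional divisors, because the Pfaffian variety is itself singular along the very loci being blown up, so one cannot simply read off codimensions from the smooth ambient space. I would handle this by working with the explicit flag description of $\widehat{Pf(V)}$: a point is a flag $0\subset F^0\subset\cdots\subset F^l=V$ with $\dim F^0=3$ together with nondegenerate forms on the successive quotients, and the forgetful/incidence structure of these flags gives local coordinates in which both the map $\pi$ and the exceptional divisors are monomial. Writing $K_{\widehat{Pf(V)}}=\pi^*K_{Pf(V)}+\sum_j\alpha_j D_j$ in these coordinates reduces the claim to verifying the arithmetic identity $\alpha_j=2j^2-5j+1$, and then the ``terminal'' conclusion follows immediately since $2j^2-5j+1>0$ for all $j\ge 3$ (the smallest value being $2\cdot 9-15+1=4$ at $j=3$), so in particular every discrepancy exceeds $-1$ and the singularities are terminal. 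I would also sanity-check the formula against the base case $j=3$, where $D_3$ is the first (unblown-up) exceptional divisor over the smooth corank-$3$ generic point of $Pf(V)$, using the known codimension-three Gorenstein structure from~\eqref{Beres}.
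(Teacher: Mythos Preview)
Your proposal correctly identifies the difficulty but does not overcome it. You note that the naive ambient-space discrepancy $\tfrac12 k(k-1)-1$ with $k=2j-1$ gives $2j^2-3j$, off from the target $2j^2-5j+1$ by $-2j+1$, and you say ``the correction must be accounted for carefully.'' But you never account for it. Your fallback plan is to compute codimensions of the blowup centers \emph{inside} $Pf(V)$ and apply the rule ``discrepancy $=$ codimension $-1$'' together with bookkeeping for nested blowups. The fatal problem is the one you yourself flag: $Pf(V)$ is singular precisely along these centers, so the smooth blowup discrepancy formula does not apply, and there is no meaningful notion of the codimension of a smooth center in a singular ambient that feeds directly into a discrepancy. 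Your suggested remedy---working in explicit flag coordinates where ``both the map $\pi$ and the exceptional divisors are monomial''---is not a computation; the map $\pi$ lands in a singular variety, so you would still need to identify $\pi^*K_{Pf(V)}$ in those coordinates, and that is exactly the content of the theorem. Saying this ``reduces the claim to verifying the arithmetic identity $\alpha_j=2j^2-5j+1$'' is circular.

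The paper avoids this entirely by a different mechanism. It introduces an auxiliary smooth variety $Z=\widehat{Pf(V)}\times_{G(3,V)}Fl(2,3,V)$, a $\PP^2$-bundle over $\widehat{Pf(V)}$, which admits \emph{two} descriptions as a relative space of complete skew forms: over $Fl(2,3,V)$ (via $Q_{3,Fl}$) and over $G(2,V)$ (via $Q_2$). Proposition~\ref{disc} applies cleanly to each description because the base is smooth, yielding two formulas for $K_Z$ with known coefficients in front of the $E_j$. Taking a specific linear combination (coefficients $-1$ and $2$) eliminates the unknown pieces, and comparing with a third expression for $K_Z$ coming from the $\PP^2$-bundle structure over $\widehat{Pf(V)}$ isolates the $\alpha_j$. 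The residual check is a Chern-class identity on $Fl(2,3,V)$. This auxiliary-variety trick is the missing idea in your sketch: it replaces a discrepancy computation on a singular target by canonical-class computations on smooth varieties where Proposition~\ref{disc} is available. Your first paragraph hints at comparing to $\widehat{\PP\Lambda^2V^\dual}$, but note that $\widehat{Pf(V)}$ is not a subvariety of $\widehat{\PP\Lambda^2V^\dual}$ (the flags have $\dim F^0=3$ versus $1$), so there is no direct restriction map to exploit; the paper's $Z$ is the correct intermediary.
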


\begin{proof}

Consider the variety $Z=\widehat{Pf(V)}\times_{G(3,V)}Fl(2,3,V)$ 
which parametrizes elements of $\widehat{Pf(V)}$ together with a choice of 
a dimension two subspace inside the tautological space for the corresponding point in $G(3,V)$.
This $Z$ is simply the space of complete skew forms on the universal quotient bundle $Q_{3,Fl}$
over the partial flag variety $Fl(2,3,V)$.
Points of $Z$ are given by flags 
$$
0\subset F^{-1}\subset F^0\subset \cdots \subset F^l=V
$$
with $\dim F^{-1}=2$, $\dim F^0=3$ and the nondegenerate forms 
$\CC w_i \in \PP \Lambda^2(F^{i+1}/F^i)^\dual$ for $i\geq 0$.
Of course, $Z$ is a $\PP^2$ bundle over $\widehat{Pf(V)}$.

\smallskip
Note that $Z$ maps to $G(2,V)$. In fact, it is clear that this map passes through  the space
$
\PP \Lambda^2 Q_2^\dual
$
of skew forms on the universal quotient bundle over the Grassmannian $G(2,V)$. Moreover, we can view 
$Z$ as the relative space of complete skew forms on $Q_2$ over $G(2,V)$. We have the following
commutative diagram.
$$
\begin{array}{ccccccc}
&&\widehat{Pf(V)}& \longleftarrow & Z&&\\
&&\downarrow &                 & \downarrow&\searrow&    \\
Pf(V)&\longleftarrow&\PP \Lambda^2 Q_{3}^\dual&\longleftarrow&\PP \Lambda^2 Q_{3,Fl}^\dual&\longrightarrow&\PP \Lambda^2 Q_2^\dual \\
&&\downarrow &                 & \downarrow& & \downarrow   \\
&&G(3,V)&\longleftarrow&Fl(2,3,V)&\longrightarrow&G(2,V)
\end{array}
$$
There are also natural morphisms $\PP \Lambda^2 Q_2^\dual\to Pf(V)$ and $Pf(V)\to \PP \Lambda^2 V^\dual$
which are not depicted in the above diagram but which commute with all of the above morphisms.

\begin{remark}\label{rem}
The natural morphism $\PP \Lambda^2 Q_{3,Fl}^\dual\to \PP \Lambda^2 Q_2^\dual$ is birational.
Indeed, a (maximum) rank $n-3$ form on a fiber of $Q_2$ has a one-dimensional kernel, which defines the 
three dimensional subspace in the flag. However, even though both varieties are smooth, the exceptional locus
$E$ of (relative) forms of positive corank is not a smooth divisor. The blowup locus in 
$\PP \Lambda^2 Q_2^\dual$ consists of (relative) skew forms of corank at least three and 
is generically smooth and of codimension three.
\end{remark}

\smallskip
There are divisors $E_j$ on $Z$ defined as the loci of the complete skew forms that have
a filtration subspace of dimension $2j-1$. The index $j$ ranges from $j=3$ to $j=\frac 12(n-1)$.
We observe that $E_j$ are the exceptional divisors of the birational morphisms 
to $\PP \Lambda^2 Q_{3,Fl}^\dual$ and $\PP \Lambda^2 Q_2^\dual$. They are also preimages of the exceptional divisors $D_j$ of $\widehat {Pf(V)}\to Pf(V)$. 

\smallskip 
The discrepancies of the map $\pi_1:Z\to \PP \Lambda^2 Q_{3,Fl}^\dual$ can be calculated 
by Proposition \ref{disc} and we get
\begin{equation}\label{1}
\begin{split}
K_Z =& \pi_1^* K_{ \PP \Lambda^2 Q_{3,Fl}^\dual} + \sum_{j=3}^{\frac 12(n-1)} ((j-2)(2j-5)-1)E_j
\\
=&\pi_3^* K_{Fl(2,3,V)} -\pi_3^*c_1(\Lambda^2 Q_{3,Fl}^\dual)- {\rm rank} (\Lambda^2 Q_{3,Fl}^\dual)\xi
+ \sum_{j=3}^{\frac 12(n-1)} (2j^2-9j+9)E_j
\\
=&\pi_3^* K_{Fl(2,3,V)} -\pi_3^*c_1(\Lambda^2 Q_{3,Fl}^\dual)- \frac 12(n-3)(n-4)\xi
+ \sum_{j=3}^{\frac 12(n-1)} (2j^2-9j+9)E_j
\\
=&\pi_3^* K_{Fl(2,3,V)} +(n-4)\pi_3^*c_1( Q_{3,Fl})- \frac 12(n-3)(n-4)\xi
+ \sum_{j=3}^{\frac 12(n-1)} (2j^2-9j+9)E_j
\end{split}
\end{equation}
for $\pi_3:Z\to Fl(2,3,V)$. Here $\xi$ is the pullback to $Z$ of the first chern class of the universal
quotient bundle on $ \PP \Lambda^2 Q_{3,Fl}^\dual$ which is also the same as the pullback
of the hyperplane class via $Z\to \PP\Lambda^2(V)$.
Note that the discrepancy of $E_3$ is $0$. Indeed,  $\pi_1(E_3)$ is already a divisor, namely
the exceptional divisor of Remark \ref{rem}.

\smallskip
Similarly, the morphism $\pi_2:Z\to \PP \Lambda^2 Q_2^\dual$ is a relative construction of 
the space of complete forms, obtained by blowing up proper preimages of 
 loci of forms of corank $2j-3$ in the odd dimensional spaces, which gives
\begin{equation}\label{2}
\begin{split}
K_Z =& \pi_2^* K_{ \PP \Lambda^2 Q_{2}^\dual} + \sum_{j=3}^{\frac 12(n-1)} ((2j-3)(j-2)-1)E_j
\\
=&
\pi_4^* K_{G(2,V)} -\pi_4^*c_1(\Lambda^2 Q_{2}^\dual)- {\rm rank} (\Lambda^2 Q_{2}^\dual)\xi
+ \sum_{j=3}^{\frac 12(n-1)} (2j^2-7j+5)E_j
\\
=&
\pi_4^* K_{G(2,V)}- \pi_4^*c_1(\Lambda^2 Q_{2}^\dual)- \frac 12(n-2)(n-3)\xi
+ \sum_{j=3}^{\frac 12(n-1)} (2j^2-7j+5)E_j
\\
=&-3\pi_4^*c_1(Q_2)- \frac 12(n-2)(n-3)\xi
+ \sum_{j=3}^{\frac 12(n-1)} (2j^2-7j+5)E_j
\end{split}
\end{equation}
for $\pi_4:Z\to G(2,V)$.

\smallskip 
We take a linear combination of the above equations \eqref{1} and \eqref{2}  with coefficients $-1$ and $2$ to get
\begin{equation}\label{3}
K_Z = 
-\pi_3^* K_{Fl(2,3,V)} -(n-4)\pi_3^*c_1(Q_{3,Fl}) -6 \pi_4^*c_1( Q_{2})
- \frac 12 n(n-3)\xi
+ \sum_{j=3}^{\frac 12(n-1)} (2j^2-5j+1)E_j.
\end{equation}
On the other hand, for $\pi:\widehat{Pf(V)}\to Pf(V)$ we have 
$$
K_{\widehat{Pf(V)}}=\pi^*K_{Pf(V)}+\sum_{j}\alpha_j D_j
=-\frac 12n(n-3)\pi^*\xi +\sum_{j}\alpha_j D_j
$$
by \eqref{canPf}
and thus for $\mu:Z\to \widehat{Pf(V)}$ and $\pi_5:Z\to G(3,V)$ we have
\begin{equation}\label{4}
\begin{split}
K_Z =&
 \mu^* K_{\widehat{Pf(V)}} + \pi_3^*K_{Fl(2,3,V)}-\pi_5^*K_{G(3,V)}
\\
=&
-\frac 12 n(n-3)\xi+\sum_{j}\alpha_j E_j+\pi_3^*K_{Fl(2,3,V)}-\pi_5^*K_{G(3,V)}.
\end{split}
\end{equation}
When we compare the formulas \eqref{3} and \eqref{4} for $K_Z$ we get:
$$
0=-2\pi_3^* K_{Fl(2,3,V)} -(n-4)\pi_3^*c_1(Q_{3,Fl}) -6 \pi_4^*c_1( Q_{2})
+\pi_5^*K_{G(3,V)}
+ \sum_{j=3}^{\frac 12(n-1)} (2j^2-5j+1-\alpha_j)E_j.
$$
Thus, to finish the proof of Theorem \ref{discpf},
 it remains to verify that 
$$
0=-2\pi_3^* K_{Fl(2,3,V)} -(n-4)\pi_3^*c_1(Q_{3,Fl}) -6 \pi_4^*c_1( Q_{2})
+\pi_5^*K_{G(3,V)}
$$
in the Picard group of $Z$. All of the ingredients of this formula are pullbacks 
from the partial flag variety $Fl(2,3,V)$, and the statement follows from
\begin{equation}\label{left}
0=-2 K_{Fl(2,3,V)} - (2n-4)c_1(Q_{3,Fl}) - 6 c_1( Q_{2,Fl})
\end{equation}
in the Picard group of $Fl(2,3,V)$, which we verify below.

\smallskip
The tangent bundle to $Fl(2,3,V)$ fits into a short exact sequence with the bundles 
$Hom(T_{3,Fl},Q_{3,Fl})$ and $Hom(T_{2,Fl},T_{3,Fl}/T_{2,Fl})$,
where $T$ denotes the appropriate tautological subbundles. Thus, we have
$$-K_{Fl(2,3,V)} = n c_1(Q_{3,Fl})+c_1(Hom(T_{2,Fl},T_{3,Fl}))-c_1(Hom(T_{2,Fl},T_{2,Fl}))$$
$$
=n c_1(Q_{3,Fl})-3c_1(T_{2,Fl})+2c_1(T_{3,Fl}) = 
(n-2) c_1(Q_{3,Fl})+3c_1(Q_{2,Fl}).
$$
This proves \eqref{left} and finishes the proof of Theorem \ref{discpf}.
\end{proof}

\begin{remark}
We get a log resolution of $\widehat Y_W \to Y_W$ by taking a complete intersection of $\widehat{Pf(V)}$ 
by $\PP W \subset \PP\Lambda^2V^\dual$.
For the generic choice of $W$, this resolution 
has the exceptional divisors $D_j\cap \widehat Y_W$ with the same discrepancies. However, there are considerably fewer of them, since the 
codimension of the image of $D_j$ in $Pf(V)$ is quadratic in $j$, so most $D_j$ have an empty intersection with the preimage of $\PP W$.
\end{remark}

\section{Stringy $E$-functions of Pfaffian varieties}\label{sec4}
The goal of this section is to calculate the stringy $E$-functions of Pfaffian subvarieties
 $Pf(\CC^{2r+1})\subset \PP\Lambda^2(\CC^{2r+1})$. We do so by induction on $r$. Our main result 
is a remarkably simple formula of 
Theorem \ref{stpf}.

\medskip
We  start with some results on the usual $E$-functions of the loci of skew forms of fixed rank.

\begin{definition}
For $i\geq 1$ we denote by $e_{2i}=e_{2i}(u,v)$ the $E$-function of the variety of 
\emph{nondegenerate} skew forms on $\CC^{2i}$, up to scaling.  For any $0\leq k\leq n$ 
we denote by $g_{k,n}(u,v)$  the $E$-functions of the 
Grassmannian $G(k,n)$. 
\end{definition}

\begin{remark}
It is possible to write explicit formulas for $e_{2i}(u,v)$ and $g_{k,n}(u,v)$.
but we will not use them in this section. The formula for $g_{k,n}(u,v)$ will be given and used in the Appendix to help deal with the more
general Pfaffian double mirror conjecture.
\end{remark}

\begin{proposition}\label{relg}
For $0\leq i\leq r$ there holds 
$$
g_{2i,2r}(u,v) = g_{2i,2r+1}(u,v) \left(\frac {(uv)^{2r-2i+1}-1}{(uv)^{2r+1}-1}\right).
$$
\end{proposition}

\begin{proof}
The partial flag variety $Fl(2i,2r,2r+1)$ is a Zariski locally trivial fibration with fiber $G(2i,2r)$ over $\PP^{2r}$.
It is also a Zariski locally trivial fibration over $G(2i,2r+1)$ with fiber $\PP^{2r-2i}$. It remains to recall that 
$E(\PP^k)=\frac {(uv)^{k+1}-1}{uv-1}$.
\end{proof}

We now observe two relations among $e_{2i}$ and the $E$-functions of Grassmannians.
\begin{proposition}\label{oddeven}
The following identities hold for any $r\geq 1$ as functions of $u,v$.
\begin{equation}\label{even}
\sum_{i=1}^{r} e_{2i}g_{2i,2r} = \frac{(uv)^{r(2r-1)}-1}{uv-1}
\end{equation}
\begin{equation}\label{odd}
\sum_{i=1}^{r} e_{2i}g_{2i,2r+1} = \frac{(uv)^{r(2r+1)}-1}{uv-1}
\end{equation}
\end{proposition}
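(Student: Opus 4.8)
The plan is to prove both identities simultaneously by stratifying the space $\PP\Lambda^2(\CC^N)^\dual$ of all nonzero skew forms up to scaling according to their rank. For $N=2r$ or $N=2r+1$, every skew form has even rank $2i$ with $1\le i\le r$, and the locus of forms of rank exactly $2i$ is a single orbit under $GL(N)$. The key geometric observation is that this orbit fibers, in a Zariski locally trivial manner, over the Grassmannian $G(2i,N)$ of \emph{coranks}: sending a form of rank $2i$ to its support (the $2i$-dimensional quotient on which it is nondegenerate, or equivalently its kernel as a corank-$(N-2i)$ datum). More precisely, a rank-$2i$ form on $\CC^N$ is the same as the choice of a surjection $\CC^N\twoheadrightarrow \CC^{2i}$ up to the appropriate identification together with a nondegenerate form on the $\CC^{2i}$ quotient, so the stratum is a locally trivial fibration with fiber the space of nondegenerate forms up to scaling, whose $E$-function is exactly $e_{2i}$, over a Grassmannian whose $E$-function is $g_{2i,N}$. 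By multiplicativity of Hodge--Deligne $E$-functions for Zariski locally trivial fibrations (already invoked in the excerpt via \cite{DKh}), each stratum contributes $e_{2i}\,g_{2i,N}$.

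The next step is to sum over strata. Since the strata for $i=1,\dots,r$ partition the entire projective space $\PP\Lambda^2(\CC^N)^\dual$, additivity of the $E$-function over a stratification gives
$$
\sum_{i=1}^{r} e_{2i}\,g_{2i,N} \;=\; E\bigl(\PP\Lambda^2(\CC^N)^\dual\bigr).
$$
The right-hand side is the $E$-function of a projective space $\PP^{d-1}$ where $d=\binom{N}{2}=\dim\Lambda^2(\CC^N)$, namely $\frac{(uv)^{d}-1}{uv-1}$. For $N=2r$ we have $\binom{2r}{2}=r(2r-1)$, giving exactly the right-hand side of \eqref{even}, and for $N=2r+1$ we have $\binom{2r+1}{2}=r(2r+1)$, giving the right-hand side of \eqref{odd}. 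This completes both formulas at once.

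The only point requiring genuine care is the claim that the rank-$2i$ stratum is Zariski locally trivial over $G(2i,N)$ with the stated fiber; everything else is bookkeeping with the standard facts $E(\PP^k)=\frac{(uv)^{k+1}-1}{uv-1}$ and the additivity and multiplicativity of $E$. I expect this to be routine rather than a real obstacle: the map sending a form to its radical (kernel) exhibits the corank as constant along the stratum, and over a Zariski-open cover of the relevant Grassmannian one can trivialize the quotient bundle, whereupon a nondegenerate form on the fiber $\CC^{2i}$ is precisely a point of the $e_{2i}$-locus. The one subtlety to state cleanly is the scaling: because we work projectively in $\PP\Lambda^2(\CC^N)^\dual$, the scaling must be carried by the fiber factor $e_{2i}$ (the nondegenerate forms up to scaling) rather than by the base, so that the product $e_{2i}\,g_{2i,N}$ is taken with $g_{2i,N}$ the honest, \emph{unprojectivized} Grassmannian $E$-function; I would verify this compatibility explicitly so that the total does indeed reassemble to $E(\PP^{d-1})$ and not to the $E$-function of the affine cone.
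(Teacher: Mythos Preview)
Your proposal is correct and follows essentially the same approach as the paper: stratify $\PP\Lambda^2(\CC^N)^\dual$ by the rank of the form, identify each stratum as a Zariski locally trivial fibration over a Grassmannian with fiber the space of nondegenerate forms up to scaling, and sum the contributions $e_{2i}g_{2i,N}$ to obtain $E(\PP^{\binom{N}{2}-1})$. The only cosmetic difference is that the paper maps to the Grassmannian of \emph{kernels} $G(N-2i,N)$ and then invokes $g_{N-2i,N}=g_{2i,N}$, establishing Zariski local triviality via an explicit unipotent-subgroup trick rather than your appeal to local triviality of the universal quotient bundle; your scaling concern is harmless since the global scaling on the form becomes exactly the scaling on the induced nondegenerate form on the quotient.
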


\begin{proof}
To prove the first identity observe that the space of skew forms $\PP\Lambda^2(\CC^{2r})$ is 
stratified by the rank $2i$ of the form for $1\leq i\leq r$. By considering the kernels of the forms of rank $2i$,
we deduce that the aforementioned $i$-th stratum is a
fibration over $G(2r-2i,2r)$ with the fiber isomorphic to the space of nondegenerate skew forms 
on $\CC^{2i}$. To show that this fibration is  Zariski locally trivial, let us view $G(2i-2r,\CC^{2r})$ as 
the space of full rank $(2r-2i,2r)$ matrices up to left action of $GL(2r-2i,\CC)$.
Then consider the subgroup of $\PP GL(2r,\CC)$ of matrices
$$
\left(
\begin{array}{cc}
I_{2r-2i} & M_{2r-2i,2i}
\\
0& I_{2i}
\end{array}
\right)
$$
whose right action on the subspace of dimension $2i-2r$ in $G(2r-2i,\CC^{2r})$
generated by first $2i-2r$ basis vectors is identifies it with an open Schubert cell.
Thus, we can use the action of this group to trivialize the fibration over this Schubert cell.
Then we cover the Grassmannian by translates of this cell and use the conjugate subgroups.
Finally we use $g_{2r-2i,2r}=g_{2i,2r}$. The second identity is proved similarly.
\end{proof}

\smallskip
The following observation is key to calculating stringy $E$-functions of Pfaffian varieties.
\begin{proposition}\label{sum}
For any $r\geq 1$ there holds
$$
\sum_{i=1}^{r-1}\left(\frac{(uv)^{2r-2i}-1}{(uv)^2-1}\right) e_{2i}g_{2i,2r+1} =
 \frac{((uv)^{2r}-1)((uv)^{2r^2-r-1}-1)}{((uv)^2-1)(uv-1)}.
$$
\end{proposition}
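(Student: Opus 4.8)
The plan is to reduce everything to the two summation identities of Proposition~\ref{oddeven} together with the ratio of Proposition~\ref{relg}, by manipulating linear combinations of sums of the form $\sum_i e_{2i}g_{2i,2r+1}$ weighted by powers of $uv$. Throughout I write $q=uv$ for readability and abbreviate $T_i = e_{2i}g_{2i,2r+1}$. The first observation is that the summand on the left-hand side vanishes at $i=r$, since $q^{2r-2i}-1=0$ there, so I may harmlessly extend the summation range and work with
$$
(q^2-1)\sum_{i=1}^{r} T_i\,\frac{q^{2r-2i}-1}{q^2-1} = \sum_{i=1}^{r} T_i\,q^{2r-2i} - \sum_{i=1}^{r} T_i .
$$
Thus it suffices to evaluate the two sums $\sum_{i=1}^r T_i$ and $\sum_{i=1}^r T_i q^{2r-2i}$ in closed form.

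The first of these is immediate from the odd identity \eqref{odd}, which gives $\sum_{i=1}^r T_i = \frac{q^{r(2r+1)}-1}{q-1}$. To obtain the second, I would feed Proposition~\ref{relg} into the even identity \eqref{even}: since $e_{2i}g_{2i,2r} = T_i\,\frac{q^{2r-2i+1}-1}{q^{2r+1}-1}$, substituting into \eqref{even} and clearing the denominator $q^{2r+1}-1$ yields
$$
\sum_{i=1}^{r} T_i\,(q^{2r-2i+1}-1) = \frac{(q^{r(2r-1)}-1)(q^{2r+1}-1)}{q-1}.
$$
Now the key step is that $\sum_i T_i(q^{2r-2i+1}-1) = q\sum_i T_i q^{2r-2i} - \sum_i T_i$, so adding back $\sum_i T_i$ and dividing by $q$ isolates $\sum_{i=1}^r T_i q^{2r-2i}$ purely in terms of the two known right-hand sides. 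The only genuinely delicate point is the exponent bookkeeping here: the weight in the target sum is $q^{2r-2i}$, whereas Proposition~\ref{relg} naturally produces the shifted weight $q^{2r-2i+1}$, so the single factor of $q$ must be tracked carefully when solving for the unshifted sum.

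Having both sums, I would substitute into the displayed expression for $(q^2-1)A_r$ and simplify. Expanding the product $(q^{r(2r-1)}-1)(q^{2r+1}-1)$ and using $r(2r-1)+(2r+1)=r(2r+1)+1$, the top-degree terms $q^{r(2r+1)}$ cancel against the contribution of $\sum_i T_i$, leaving (after clearing the extra $q$) the bracket $q^{\,r(2r-1)-1}(q^{2r}-1)-(q^{2r}-1)$. The final move is to factor out $q^{2r}-1$ and recognize $r(2r-1)-1 = 2r^2-r-1$, which produces exactly
$$
(q^2-1)A_r = \frac{(q^{2r}-1)(q^{2r^2-r-1}-1)}{q-1},
$$
and dividing by $q^2-1$ gives the claimed formula. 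I expect the main obstacle to be not conceptual but organizational: keeping the cancellation of the highest powers transparent and correctly identifying the factorization $(q^{2r}-1)(q^{2r^2-r-1}-1)$ at the end, rather than any substantive difficulty, since all the geometric input is already packaged in Propositions~\ref{relg} and \ref{oddeven}.
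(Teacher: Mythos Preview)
Your proof is correct and uses essentially the same approach as the paper: both arguments extend the sum to $i=r$, invoke Proposition~\ref{relg} to relate $g_{2i,2r+1}$ to $g_{2i,2r}$, and then apply the two identities \eqref{even} and \eqref{odd} of Proposition~\ref{oddeven}. The only difference is organizational: the paper rewrites the coefficient $\frac{(uv)^{2r-2i}-1}{(uv)^2-1}g_{2i,2r+1}$ directly as a linear combination of $g_{2i,2r}$ and $g_{2i,2r+1}$ before summing, whereas you first split the sum into $\sum T_i q^{2r-2i}$ and $\sum T_i$ and evaluate each separately; the resulting algebra is identical.
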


\begin{proof}
We can  change the index of summation to $i=1,\ldots,r$, since the $i=r$ term is identically zero.
We combine the results of Propositions \ref{relg}  and \ref{oddeven} as follows.
By Proposition \ref{relg} we have
$$
\left(\frac{(uv)^{2r-2i}-1}{(uv)^2-1}\right)g_{2i,2r+1} 
= \frac {1}{uv((uv)^2-1)}\left(
((uv)^{2r+1}-1)
g_{2i,2r} -(uv-1)g_{2i,2r+1}
\right).$$
Then we use \eqref{even}  and \eqref{odd}
to get
$$
\sum_{i=1}^{r}\left(\frac{(uv)^{2r-2i}-1}{(uv)^2-1}\right) e_{2i}g_{2i,2r+1}
=\frac {1}{uv((uv)^2-1)}\left(
((uv)^{2r+1}-1)
 \frac{(uv)^{r(2r-1)}-1}{uv-1} \right.
 $$
 $$\left.-
 (uv-1) \frac{(uv)^{r(2r+1)}-1}{uv-1} 
\right)= \frac{((uv)^{2r}-1)((uv)^{2r^2-r-1}-1)}{((uv)^2-1)(uv-1)}.
$$
\end{proof}

The main result of this section is the formula for the stringy $E$-function of the Pfaffian variety 
$Pf(\CC^{2r+1})$ of skew forms of rank $\leq 2r-2$ in $\PP\Lambda^2(\CC^{2r+1})$.
\begin{theorem}\label{stpf}
For any $r\geq 2$ the stringy $E$-function of the Pfaffian variety $Pf(\CC^{2r+1})$ is given by
$$
E_{st}(Pf(\CC^{2r+1});u,v)=\frac{((uv)^{2r}-1)((uv)^{2r^2-r-1}-1)}{((uv)^2-1)(uv-1)}.
$$
\end{theorem}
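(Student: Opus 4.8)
The plan is to argue by induction on $r$, using the Zariski locally trivial log resolution $\pi\colon\widehat{Pf(\CC^{2r+1})}\to Pf(\CC^{2r+1})$ from Theorem \ref{discpf}, the stratification of $Pf$ by corank, and the local-contribution formula \eqref{Zarlogtriv}. For the base case $r=2$ I would note that $Pf(\CC^5)$ is the locus of forms of rank $\leq 2$, i.e.\ $G(2,\CC^5)$ in its Pl\"ucker embedding; this is smooth, so $E_{st}=g_{2,5}$, and a direct check shows $g_{2,5}$ agrees with the claimed expression at $r=2$.

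For the inductive step I would stratify $Pf(\CC^{2r+1})$ by the corank $2m+1$ of the form, $m=1,\dots,r-1$. The corank-$(2m+1)$ stratum $P_{2m+1}$ fibers over $G(2m+1,2r+1)$ via $\omega\mapsto\ker\omega$, with fiber the (projectivized) nondegenerate forms on the $2(r-m)$-dimensional quotient; this fibration is Zariski locally trivial by the Schubert-cell argument already used in Proposition \ref{oddeven}, so $E(P_{2m+1})=e_{2(r-m)}\,g_{2(r-m),2r+1}$, where I use $g_{2m+1,2r+1}=g_{2(r-m),2r+1}$. Invoking \eqref{Zarlogtriv} then gives
$$
E_{st}(Pf(\CC^{2r+1}))=\sum_{m=1}^{r-1} e_{2(r-m)}\,g_{2(r-m),2r+1}\,S_{2m+1},
$$
where $S_{2m+1}$ is the local contribution of Definition \ref{s} at a corank-$(2m+1)$ point.

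The heart of the argument is the evaluation $S_{2m+1}=\frac{(uv)^{2m}-1}{(uv)^2-1}$. Over a corank-$(2m+1)$ form $\omega$ with kernel $K$, I would identify the fiber $\pi^{-1}(\omega)$ with the space of complete skew forms on $K\cong\CC^{2m+1}$ refining the flag, which is canonically the resolution $\widehat{Pf(\CC^{2m+1})}$, and I would check that the restrictions to this fiber of the exceptional divisors $D_j$ of $\pi$ are exactly the exceptional divisors of $\widehat{Pf(K)}\to Pf(K)$, carrying the same discrepancies $2j^2-5j+1$. Since $K$ itself has dimension $2m+1=2(m+1)-1$, the entire fiber lies on $D_{m+1}$, whose discrepancy is $2(m+1)^2-5(m+1)+1=2m^2-m-2$; factoring out its contribution leaves precisely the stringy sum for $Pf(\CC^{2m+1})$, so that
$$
S_{2m+1}=\frac{uv-1}{(uv)^{2m^2-m-1}-1}\,E_{st}(Pf(\CC^{2m+1})).
$$
The induction hypothesis supplies $E_{st}(Pf(\CC^{2m+1}))$, and the factor $(uv)^{2m^2-m-1}-1$ cancels exactly, yielding $S_{2m+1}=\frac{(uv)^{2m}-1}{(uv)^2-1}$; the case $m=1$ is the smooth locus of $Pf$, where $\pi$ is an isomorphism and $S_3=1$ directly. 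Substituting this into the displayed sum and reindexing by $i=r-m$ produces $\sum_{i=1}^{r-1}\frac{(uv)^{2r-2i}-1}{(uv)^2-1}\,e_{2i}\,g_{2i,2r+1}$, which equals the asserted closed form by Proposition \ref{sum}.

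I expect the main obstacle to be the geometric step: carefully identifying $\pi^{-1}(\omega)$ with the smaller resolution $\widehat{Pf(K)}$, matching the restricted exceptional divisors and confirming that their discrepancies are unchanged, and verifying that the whole corank stratification is genuinely Zariski locally trivial so that \eqref{Zarlogtriv} applies and $S_{2m+1}$ is constant on each stratum. The numerical coincidence that $\alpha_{m+1}+1=2m^2-m-1$ is precisely the exponent occurring in the inductive formula is what drives the cancellation, and I would double-check this bookkeeping before assembling the final identity.
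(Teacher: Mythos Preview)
Your proposal is correct and follows essentially the same route as the paper's proof: induction on $r$ with base case $Pf(\CC^5)\cong G(2,5)\cong G(3,5)$, identification of the fiber $\pi^{-1}(\omega)$ over a corank-$(2m+1)$ form with $\widehat{Pf(\ker\omega)}$, the extra $D_{m+1}$-factor giving $S_{2m+1}=\frac{uv-1}{(uv)^{2m^2-m-1}-1}E_{st}(Pf(\CC^{2m+1}))=\frac{(uv)^{2m}-1}{(uv)^2-1}$ by induction, and then Proposition~\ref{sum} to close. The paper indexes by rank $2i$ rather than corank $2m+1$ (so $i=r-m$) and likewise handles the smooth stratum $m=1$ separately since $Pf(\CC^3)$ is undefined, but the argument is the same.
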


\begin{proof}
We will argue by induction on $r$. The case $r=2$ is straightforward, since $Pf(\CC^5)$ is
a smooth variety isomorphic to $G(3,5)$, whose cohomology is well known.

\smallskip
We now assume  the result of this theorem
for $Pf(\CC^{2k+1})$ for all $k<r$ and consider a vector space $V=\CC^{2r+1}$.
Observe that the log resolution 
$\widehat{Pf(V)}$ of
$Pf(V)\subset \PP\Lambda^2 V^\dual$ considered in Section \ref{sec.log} is naturally 
stratified by specifying various choices 
of the subspaces for the partial complete skew forms.
Specifically, the strata are in bijections with subsets $I$ of the set of odd integers $\{3,\ldots, 2r+1\}$
that include $3$ and $2r+1$. The contribution of the said stratum to the stringy $E$-function of $Pf(V)$ 
as defined by Definition \ref{def.Est} is the product of the $E$-polynomial of the stratum with 
$$
\prod_{3\leq j\leq r,~2j-1\in I} \left(
\frac {uv-1}{(uv)^{2j^2-5j+2}-1}
\right).
$$

\smallskip
Each of these strata is  a Zariski locally
trivial fibration over the locus of forms of rank $2i$ on $V$ where $2i$ is the codimension of the largest
proper subspace in the flag. Indeed, observe that the preimage of a point $\CC w\in Pf(V)$ of 
rank $2i$ is naturally isomorphic to the space of partial skew forms on ${\rm Ker}(w)$ so we can locally
trivialize the fibration by using the subgroup trick of the proof of Proposition \ref{oddeven}. Notice now that
this fiber is the log resolution of the lower-dimensional Pfaffian $Pf(\CC^{2r-2i+1})$! Moreover, we can relate the contributions 
of the strata of the resolution of $Pf(\CC^{2r-2i+1})$ to that of $Pf(V)$ by observing that they lie in an additional
divisor $D_j$ with $j={r-i}+1$, but otherwise lie in the same set of divisors. So there is an additional factor of 
$$
\frac{uv-1}{(uv)^{2j^2-5j+2}-1} 
=
\frac{uv-1}{(uv)^{2(r-i)^2-(r-i)-1}-1} 
$$
when one compares the contribution to $Pf(V)$ as opposed to the contribution to $Pf(\CC^{2r-2i+1})$.
Thus, the $S$-function in the sense of Definition \ref{s} is equal to
$$S(\CC w;u,v)= E_{st}(Pf(\CC^{2r-2i+1})\left(
\frac{uv-1}{(uv)^{2(r-i)^2-(r-i)-1}-1} 
\right)
=
\frac{(uv)^{2(r-i)}-1}{(uv)^{2}-1}
$$
by the induction assumption.
 
\smallskip
We  now use the formula \eqref{Zarlogtriv} from Section \ref{sec2}.
The locus of forms of rank $2i$ in $\PP\Lambda^2V^\dual$ is a Zariski locally trivial fibration over
$G(2r+1-2i,V)$ with fibers the spaces of nondegenerate skew forms on $\CC^{2i}$ up to scaling. Thus,
this locus 
 has $E$-polynomial $e_{2i}g_{2i,2r+1}$.
Thus, the contribution of the strata in $\widehat{Pf(V)}$ that lie over the locus of forms of rank $2i$ 
is equal to 
$$
e_{2i}g_{2i,2r+1}
\left(
\frac{(uv)^{2(r-i)}-1}{(uv)^{2}-1}
\right).
$$
The index $i$ runs from $1$ to $r-1$, but the contribution of the open stratum
$i=r-1$ needs to be considered separately,
because $Pf(\CC^3)$ is not defined, and the intermediate formula does not make sense. However,
this contribution is easily seen to be $e_{2r-2}g_{3,2r+1}$, so the final formula works  for $i=r-1$ as well.
We now have 
$$
E_{st}(Pf(\CC^{2r+1}))=\sum_{i=1}^{r-1}e_{2i}g_{2i,2r+1}
\left(\frac{(uv)^{2(r-i)}-1}{(uv)^{2}-1}\right)
$$
and it remains to use Proposition \ref{sum}.
\end{proof}

\begin{remark}\label{remspace}
Observe that the usual $E$-function of $Pf(\CC^{2r+1})$ is a rather complicated polynomial in $u,v$
given by 
$E(\PP^{2r^2+r-1})-e_{2r}E(\PP^{2r})$.
It is satisfying, even if somewhat expected, that the stringy $E$-function of the Pfaffian variety is 
simpler than its usual $E$-function. This is yet another justification for the use of $E_{st}$. We intend
to look for a natural vector space (or rather a flat family of vector spaces) to serve as stringy cohomology of the 
Pfaffian, similar to the toric singularities case of \cite{torloc}.
\end{remark}

\section{Comparison of stringy $E$ functions}\label{proof}
In this section we  prove our main result Theorem \ref{main} that compares the (stringy) $E$-functions of the double mirror
Calabi-Yau manifolds $X_W$ and $Y_W$. The main idea is to reduce the calculation of the 
$E$-function of $X_W$ to that of its Cayley hypersurface $H$ which we define below. Then we consider the projection of $H$
onto $\PP W$ and look at the fibers of that projection over different loci in $Y_W$.

\medskip
Recall that we have subspace $W$ of dimension $n$ in $\Lambda^2 V^\dual$, the Grassmannian complete intersection $X_W$ in $G(2,V)$ and the Pfaffian locus $Y_W$ in $\PP W$. Consider the 
Cayley hypersurface $H\subset G(2,V)\times \PP W$ which consists of $(T_2,w)$ with $w\Big\vert_{T_2}=0$.

\medskip
We first connect the $E$-function of $X_W$ with that of $H$. The projection $H\to G(2,V)$ has 
fibers $\PP^{n-2}$ over $T_2\not\in X_W$ and fibers $\PP^{n-1}$ over $T_2\in X_W$. Moreover, on
these loci the fibration is Zariski locally trivial, since it is a projectivization of a vector bundle. Therefore,
\begin{align*}
E(H;u,v) &=\Big(E(G(2,V))-E(X_W)\Big) E(\PP^{n-2}) + E(X_W)E(\PP^{n-1}) 
\\
&=E(G(2,V)) \frac{(uv)^{n-1}-1}{uv-1} +E(X_W)(uv)^{n-1}.
\end{align*}
The $E$-function of $G(2,V)$ can be calculated by realizing $G(2,V)$ as the base of the Zariski locally trivial
fibration of the space of ordered pairs of linearly independent vectors in $V$. The fiber is $GL(2,\CC)$ with 
$E$-polynomial $((uv)^2-1)((uv)^2-uv)$. The total space has $E$ polynomial $((uv)^n-1)((uv)^n-uv)$
(first pick one nonzero vector, then pick a vector not in its span). Thus we get
\begin{equation}\label{chigr}
E(H;u,v) = E(X_W;u,v)(uv)^{n-1} +\Big(\frac{(uv)^{n-1}-1}{(uv)-1}\Big)\Big(\frac {((uv)^n-1)((uv)^n-uv)}{((uv)^2-1)((uv)^2-uv)}\Big).
\end{equation}

\medskip
We now consider the projection of $H$ to the second factor $\PP W$. 
While we do not know whether this fibration is Zariski locally trivial over each locus of 
the forms $w\in\PP W$ of $\dim {\rm Ker}(w) = 2k+1$ for $k=0,1,\ldots$, we will show that the $E$-function
is still multiplicative on the fibers. Let us denote these strata of $\PP W$
by $P_k$. For example, $Y_W$ is the closure of $P_1$. The fiber of the projection $H\to W$ over the stratum
$P_k$ is the hypersurface in $G(2,V)$ given by $w=0$ for $\dim {\rm Ker}(w) = 2k+1$. Let us calculate the 
$E$-function of this hypersurface.

\begin{lemma}\label{fk}
The $E$ function of the fiber $F_k$ of $H\to \PP W$ over $w$ of corank $2k+1$ is given by
$$
E(F_k;u,v) = \Big( \frac {(uv)^{2k}-1}{(uv)^2-1}\Big)(uv)^{n-1} + \frac 1{uv+1}
\Big(\frac {(uv)^{n-1}-1}{uv-1}\Big)^2.
$$
\end{lemma}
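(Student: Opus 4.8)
The plan is to fix a form $w$ of corank $2k+1$, describe the fiber $F_k$ as a variety of isotropic $2$-planes, and compute its $E$-function by stratifying according to how a $2$-plane meets the kernel of $w$. Let $K={\rm Ker}(w)$, so $\dim K=2k+1$, and let $m=n-2k-1=\dim(V/K)$; the form $w$ descends to a nondegenerate symplectic form on $V/K$. By definition $F_k=\{T_2\in G(2,V):w|_{T_2}=0\}$ is exactly the variety of $w$-isotropic $2$-planes. I would stratify $F_k$ by $d=\dim(T_2\cap K)\in\{0,1,2\}$ and compute the contribution of each locally closed stratum, using additivity of $E$ over stratifications and multiplicativity over Zariski locally trivial fibrations.

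The stratum $d=2$ consists of $2$-planes contained in $K$, i.e.\ $G(2,K)=G(2,2k+1)$, contributing $g_{2,2k+1}$. The key simplification occurs in the stratum $d=1$: if $T_2=\langle u,v\rangle$ with $u\in K$ and $v\notin K$, then $w(u,v)=0$ since $u\in{\rm Ker}(w)$, so \emph{every} such $2$-plane is automatically isotropic. This stratum fibers over $\PP(K)=\PP^{2k}$ (recording $L=T_2\cap K$) with fiber the lines in $V/L$ not lying in $K/L$, namely $\PP^{n-2}\setminus\PP^{2k-1}$; this fibration is Zariski locally trivial, so its contribution is $\frac{(uv)^{2k+1}-1}{uv-1}\cdot\big(\frac{(uv)^{n-1}-1}{uv-1}-\frac{(uv)^{2k}-1}{uv-1}\big)$. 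In the stratum $d=0$ a $2$-plane $T_2$ maps isomorphically onto a $2$-plane $\bar T_2\subset V/K$, and $w|_{T_2}=0$ precisely when $\bar T_2$ is isotropic for the symplectic form on $V/K$; recording $\bar T_2$ gives a Zariski locally trivial fibration over the isotropic Grassmannian $IG(2,m)$ whose fiber is the affine space $Hom(\bar T_2,K)\cong\CC^{2(2k+1)}$ of liftings, of $E$-function $(uv)^{4k+2}$.

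It then remains to compute $E(IG(2,m))$, which I would do by the standard double fibration: the partial flag of a line $L$ inside an isotropic $2$-plane fibers over $\PP^{m-1}$ (recording $L$) with fiber $\PP(L^\perp/L)=\PP^{m-3}$, and fibers over $IG(2,m)$ with fiber $\PP^1$, giving $E(IG(2,m))=\frac{1}{uv+1}\cdot\frac{((uv)^m-1)((uv)^{m-2}-1)}{(uv-1)^2}$ (which correctly vanishes when $m=2$, as no isotropic $2$-plane exists). Adding the three contributions and substituting $m=n-2k-1$ yields, after clearing denominators over $(uv-1)((uv)^2-1)$, a numerator equal to $((uv)^{2k}-1)(uv)^{n-1}(uv-1)+((uv)^{n-1}-1)^2$, which is precisely the claimed formula.

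The main obstacle is purely computational: the final algebraic identity among the three rational contributions. The conceptual content—the automatic isotropy of the $d=1$ stratum and the identification of the $d=0$ stratum with a bundle over $IG(2,m)$—is elementary, and the Zariski local triviality of each fibration follows from the same subgroup-translation argument used in the proof of Proposition \ref{oddeven}. I would carry out the final simplification by setting $P=(uv)^{n-1}$ and $a=(uv)^{2k}$ and checking that both the sum of contributions and the claimed expression reduce to $P^2+P(a\,uv-uv-a-1)+1$.
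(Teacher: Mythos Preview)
Your proof is correct but takes a different route from the paper. The paper passes to the frame bundle $B_k$ of ordered independent pairs $(v_1,v_2)$ with $w(v_1,v_2)=0$, stratifies into just two pieces according to whether $v_1\in K$ or not, and then divides by $E(GL(2,\CC))$. Your approach works directly on $F_k$ and stratifies by $\dim(T_2\cap K)\in\{0,1,2\}$, which is more geometric but requires a separate computation of $E(IG(2,m))$ for the $d=0$ stratum. The frame-bundle trick is slightly slicker: it bypasses the isotropic Grassmannian entirely and leads to a shorter algebraic simplification. On the other hand, your decomposition makes the structure of $F_k$ transparent, and your observation that the $d=1$ stratum is automatically isotropic is exactly the geometric reason the paper's $v_1\in K$ case has fiber $\CC^n-\CC$ rather than something smaller. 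Both arguments rely on the same Zariski local triviality justifications, and your final algebraic check with $P=(uv)^{n-1}$, $a=(uv)^{2k}$ is correct.
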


\begin{proof}
As is in the Grassmannian calculation, we consider the Zariski locally  trivial $GL(2,\CC)$ fibration $B_k$ over this
$F_k$ given by linearly independent $(v_1,v_2)$ in $V$ with the property 
$w(v_1,v_2)=0$.  The part of $B_k$ with 
$v_1\in {\rm Ker}(w)$ is fibered over ${\rm Ker}(w)-\{0\}$ with fiber $\CC^n-\CC$, because $v_2$ can be picked
arbitrarily. If $v_1\not\in {\rm Ker}(w)$ then the choices for $v_2$ are $\CC^{n-1}-\CC$ (again, the fibration is 
Zariski locally trivial). This gives
\begin{align*}
E(B_k) =& ((uv)^{2k+1}-1)((uv)^n-uv) + ((uv)^n-(uv)^{2k+1})((uv)^{n-1}-uv) 
\\=&
(uv)^{2k+n+1}-(uv)^n-(uv)^{2k+2}+uv+(uv)^{2n-1}-(uv)^{n+1}
-(uv)^{2k+n}
\\&
+(uv)^{2k+2}=
((uv)^{2k}-1)((uv)^{n+1}-(uv)^n) + uv((uv)^{n-1}-1)^2
\end{align*}
and 
$$
E(F_k;u,v) = \frac {E(B_k)}{((uv)^2-1)((uv)^2-uv)} = \Big( \frac {(uv)^{2k}-1}{(uv)^2-1}\Big)(uv)^{n-1} + \frac 1{uv+1}
\Big(\frac {(uv)^{n-1}-1}{(uv)-1}\Big)^2.
$$
\end{proof}

\begin{lemma}\label{fk1}
Let $H_k$ be the preimage in $H$ of the locus $P_k$. Then there holds
$$
E(H_k)=E(P_k)E(F_k).
$$
\end{lemma}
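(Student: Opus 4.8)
The plan is to prove that the $E$-function is multiplicative for the projection $H_k \to P_k$, i.e.\ that $E(H_k) = E(P_k)E(F_k)$, even though we do not claim the fibration is Zariski locally trivial. The key point is that the fiber $F_k$ computed in Lemma \ref{fk} depends only on the corank $2k+1$ of the form $w$, and not on any further data of $w\in P_k$. This strongly suggests that the fibration, while perhaps not Zariski locally trivial globally, is locally trivial in a strong enough sense (\'etale locally, or Zariski locally over the strata) to force multiplicativity of the Hodge--Deligne polynomial.

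First I would make precise the source of the potential failure of Zariski local triviality: the fiber $F_k$ is a hypersurface in $G(2,V)$ cut out by the condition $w\vert_{T_2}=0$, and as $w$ varies over $P_k$, the kernel $\mathrm{Ker}(w)$ varies, defining a map $P_k \to G(2k+1,V)$. The structure of $F_k$ is entirely governed by the flag data $0\subset \mathrm{Ker}(w)\subset V$ together with the induced nondegenerate form on $V/\mathrm{Ker}(w)$. Thus I would realize $H_k$ as pulled back from a universal family over a parameter space carrying the tautological kernel bundle, and argue that over this parameter space the construction \emph{is} Zariski locally trivial, using the same subgroup trick as in the proof of Proposition \ref{oddeven}: one trivializes the kernel subbundle over a Schubert cell by the unipotent group action, which simultaneously trivializes the resulting hypersurface fibration. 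Covering by translates of the cell then gives Zariski local triviality over this auxiliary base, hence multiplicativity there.

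The remaining step is to transfer this to $P_k$ itself. Even if $P_k \to G(2k+1,V)$ (sending $w$ to $\mathrm{Ker}(w)$) fails to be Zariski locally trivial, the multiplicativity of $E$ does not require it: what is needed is that the Hodge--Deligne polynomial is multiplicative whenever the fibers are all isomorphic with \emph{compatible} mixed Hodge structures, which holds for any fibration that is a fiber bundle in the analytic or \'etale topology with fiber admitting a cell-type (or more generally, weight-additive) decomposition. Concretely, the decomposition of $F_k$ into the two loci (with $v_1\in\mathrm{Ker}(w)$ or not) used in Lemma \ref{fk} is induced by algebraic conditions that vary algebraically over $P_k$, so $H_k$ itself stratifies into pieces that \emph{are} Zariski locally trivial fibrations over $P_k$ (each piece being a projectivized vector bundle over a Grassmannian bundle built from the tautological kernel), and summing the multiplicative contributions of these pieces reassembles $E(P_k)E(F_k)$.

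The main obstacle I anticipate is exactly the global non-triviality flagged in the paper: one must be careful to isolate a stratification of $H_k$ into genuinely Zariski locally trivial pieces, rather than asserting local triviality of the whole fibration $H_k\to P_k$. The cleanest route is to avoid trivializing over $P_k$ directly and instead express $E(H_k)$ as a sum over the natural algebraic substrata (indexed by the incidence of $T_2$ with $\mathrm{Ker}(w)$), each of which is manifestly a projective bundle over a locally trivial Grassmannian-type fibration, so that multiplicativity of $E$ for projective bundles and the additivity of $E$ over a stratification together yield the result. I would then verify that the resulting sum matches the product $E(P_k)\cdot E(F_k)$ with $E(F_k)$ as in Lemma \ref{fk}, which is a formal bookkeeping check given that the substrata of $H_k$ are pulled back fiberwise from the substrata of $F_k$ used in that lemma.
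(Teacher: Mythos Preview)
Your overall strategy---stratify $H_k$ into pieces that are genuinely Zariski locally trivial over $P_k$ and sum---is correct and is exactly what the paper does. However, the paper's implementation is cleaner and slightly different from the route you finally settle on. The paper does not stratify $H_k$ by $\dim(T_2\cap\mathrm{Ker}(w))$; instead it passes to the $GL(2,\CC)$-frame bundle $\tilde H_k\to H_k$, whose points are triples $(v_1,v_2,w)$ with $w(v_1,v_2)=0$, and stratifies $\tilde H_k$ into just two pieces according to whether $v_1\in\mathrm{Ker}(w)$. On each piece the remaining choices of $v_2$ are cut out by \emph{linear} conditions, so each piece is an iterated Zariski locally trivial fibration over $P_k$; this yields $E(\tilde H_k)=E(P_k)E(B_k)$ immediately, and dividing by $E(GL(2,\CC))$ gives the lemma. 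Note also a small slip in your write-up: the two-piece decomposition ``$v_1\in\mathrm{Ker}(w)$ or not'' used in Lemma~\ref{fk} is a decomposition of $B_k$, not of $F_k$.

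Your proposed stratification of $H_k$ by $\dim(T_2\cap\mathrm{Ker}(w))$ would also work, but the claim that each stratum is ``manifestly a projective bundle over a locally trivial Grassmannian-type fibration'' is not accurate for the open stratum $T_2\cap\mathrm{Ker}(w)=0$: there the fiber is the variety of isotropic $2$-planes for a nondegenerate symplectic form on $V/\mathrm{Ker}(w)$, which is not a projective bundle. It is still Zariski locally trivial over $P_k$ (one must trivialize both the kernel subbundle \emph{and} the induced symplectic form on the quotient, which can be done over a Schubert cell via the subgroup trick), but this requires an extra argument you have not supplied. The frame-bundle route in the paper avoids this by reducing everything to linear conditions on vectors, which is why it is the preferred execution.
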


\begin{proof}
We consider the frame bundle $\tilde H_k$ over $H_k$ whose fiber over a point $(T_2,w)\in H$ is the space of all bases of $T_2$. Thus points in $\tilde H_k$ encode triples $(v_1,v_2,w)$ where $v_1$ and $v_2$ are linearly independent,
$w\in P_k$ and $w(v_1,v_2)=0$. 
The map $\tilde H_k\to P_k$ has fibers isomorphic to $B_k$. 
As in the proof of Lemma \ref{fk}, the space $\tilde H_k$ 
can be further subdivided into two subsets depending on whether $v_1\in Ker(w)$ or $v_1\not\in Ker(w)$. For each of the subspaces,
we get an iterated structure of Zariski locally trivial fibration over $P_k$ and therefore
$$
E(\tilde H_k)=E(P_k)E(B_k)
$$
which implies the claim of the lemma.
\end{proof}

We are now able to calculate the $E$ function of $H$ using the projection to $\PP W$:
\begin{align*}
E(H;u,v) =& \sum_{k\geq 0} E(P_k)E(F_k) 
=\sum_{k\geq 0} E(P_k)  \Big( \frac {(uv)^{2k}-1}{(uv)^2-1}\Big)(uv)^{n-1} \\
&+
\sum_{k\geq 0}E(P_k) \frac 1{uv+1}
\Big(\frac {(uv)^{n-1}-1}{uv-1}\Big)^2
\\
=&\sum_{k\geq 0} E(P_k)  \Big( \frac {(uv)^{2k}-1}{(uv)^2-1}\Big)(uv)^{n-1}
+E(\PP W)\frac 1{uv+1}
\Big(\frac {(uv)^{n-1}-1}{uv-1}\Big)^2
\\=&\sum_{k\geq 0} E(P_k)  \Big( \frac {(uv)^{2k}-1}{(uv)^2-1}\Big)y^{n-1}
+\frac 1{uv+1}\Big(\frac {(uv)^{n}-1}{uv-1}\Big)\Big(\frac {(uv)^{n-1}-1}{uv-1}\Big)^2.
\end{align*}

We compare the above equation with \eqref{chigr} to get a simple formula
\begin{equation}\label{almost}
E(X_W;u,v) = \sum_{k\geq 0} E(P_k;u,v)  \Big( \frac {(uv)^{2k}-1}{(uv)^2-1}\Big).
\end{equation}

\begin{remark}
The contribution of $k=0$ term is zero, so
the right hand side of the above formula involves summation over the loci 
$P_k,k\geq 1$. For example, for $n\leq 9$ we have $P_{\geq 2}=\emptyset$ and $Y_W=P_1$
is smooth, so we immediately get the equality $E(X_W)=E(Y_W)$. 
\end{remark}

\begin{remark}
The above calculations can be performed in the Grothendieck ring of varieties over $\CC$. This idea was explored
in \cite{Bzd} to show that the class of affine line is a zero divisor in the Grothendieck ring.
\end{remark}

We are now ready to prove the first major result of this paper, Theorem \ref{main}.
\begin{proof}(of Theorem \ref{main})
The statement of equality of Hodge numbers is equivalent to 
$$E(X_W;u,v)=E_{st}(Y_W;u,v).$$
Observe that $Y_W$ is a transversal
complete intersection of the Pfaffian variety of degenerate forms on $V$. Therefore, the log resolution 
of $Y_W$ can be obtained by taking a complete intersection in $\widehat{Pf(V)}$. Different strata 
of the resolution are therefore Zariski locally trivial fibrations over $P_k$ for appropriate $k$. 
As in the proof of Theorem \ref{stpf}, we observe that the fibers over $P_k$ for $k\geq 2$ are log resolutions 
of the Pfaffian varieties $Pf(\CC^{2k+1})$, and  that the strata are now counted 
with the extra factor
$$
\frac{uv-1}{(uv)^{2k^2-k-1}-1} 
$$
due to the additional divisor $D_{k+1}$. 
Thus the contribution of the stratum $P_k$ to $E_{st}(Y_W;u,v)$ is
$$
E(P_k;u,v) E_{st}(Pf(\CC^{2k+1});u,v) \left(
\frac{uv-1}{(uv)^{2k^2-k-1}-1} 
\right)
=
E(P_k;u,v) \left(
\frac{(uv)^{2k}-1}{(uv)^{2}-1} 
\right)
$$
where we used Theorem \ref{stpf} to calculate the stringy $E$-function of $Pf(\CC^{2k+1})$.
The same formula applies for $k=1$, because the log resolution is the isomorphism over this locus.
Thus we have 
$$
E_{st}(Y_W;u,v) = \sum_{k\geq 1} E(P_k;u,v) 
\frac{(uv)^{2k}-1}{(uv)^{2}-1}
$$
which equals 
$E(X_W)$  in view of \eqref{almost}.
\end{proof}

\section{Even-dimensional case}\label{sec.even}
There is a similar, although slightly less appealing correspondence between complete intersections 
in the Grassmannian $G(2,V)$ and a Pfaffian variety in the case of even $n$. The double mirrors in this case are only so in some not particularly clear generalized sense. In particular, they are not of the same dimension,
so the relation of the Hodge numbers needs to be corrected. Remarkably, we see that one needs to somehow modify the definition of stringy Hodge numbers in order to get an analog of Theorem \ref{main} in the case
of even $n$.

\medskip
\begin{definition}
Let $n\geq 4$ be an even integer. Let $V$ be a vector space of dimension $n$ and let $W$ be a generic subspace of dimension $n$ in $\Lambda^2 V^\dual$. We define $X_W$ and $Y_W$ as follows.
\begin{itemize}
\item $X_W\subset G(2,V)$ is the locus of $T_2\subset V$ with $w\Big\vert_{T_2} = 0$ for all $w\in W$.
\item $Y_W$ is the hypersurface in $\PP W$ of skew forms $\CC w$ of positive corank. 
\end{itemize}
\end{definition}

\begin{remark}
For $n\geq 6$ we see that $X_W$ is a smooth Calabi-Yau variety of dimension $n-4$. It is a union of two points for $n=4$. For any $n\geq 4$ the variety $Y_W$ is a hypersurface of dimension $n-2$ and degree $\frac n2$
given by the vanishing of a single Pfaffian. It is smooth for $n\leq 6$ and has Gorenstein singularities for 
larger $n$.
\end{remark}

\begin{remark}
We still want to view $X_W$ and $Y_W$ as double mirror to each other in some generalized sense. There have been examples of generalized mirror symmetry,
for example in the setting of rigid Calabi-Yau varieties see \cite[Section 5]{BB}, but they have not been studied systematically. See also \cite[Conjecture 4.4]{Kuznetsov2} and \cite{IM}.
\end{remark}

\smallskip
To understand what type of relation between the $E$-functions of $X_W$ and $Y_W$ one might expect,
we derive the analog of equation \eqref{almost}.

\begin{proposition}\label{almost1}
We denote by $P_k$ the locally closed subvariety of  $\PP \Lambda^2 V^\dual$ 
of forms of corank $2k$.  In particular, the closure of $P_1$ is $Pf(V)$.
For any $n\geq 4$ there holds
$$
(uv)E(X_W;u,v) = \sum_{k\geq 0}\Big( \frac {(uv)^{2k}-1}{(uv)^2-1}\Big) E(P_k;u,v) 
 - \frac{(uv)^n-1}{(uv)^2-1}.
$$
\end{proposition}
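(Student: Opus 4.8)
The plan is to run the argument of Section~\ref{proof} with only the parity-sensitive input changed. I introduce the Cayley hypersurface $H\subset G(2,V)\times \PP W$ of pairs $(T_2,w)$ with $w\big\vert_{T_2}=0$ and compute $E(H)$ through its two projections. The projection $H\to G(2,V)$ is insensitive to the parity of $n$: its fiber over $T_2$ is the projectivization of $\{w\in W: w\big\vert_{T_2}=0\}$, hence $\PP^{n-2}$ for $T_2\notin X_W$ and $\PP^{n-1}$ for $T_2\in X_W$, and the fibration is Zariski locally trivial. Therefore equation \eqref{chigr} holds verbatim, and writing $t=uv$ it reads
$$
E(H) = E(X_W)\,t^{n-1} + \Big(\frac{t^{n-1}-1}{t-1}\Big)\,\frac{(t^n-1)(t^n-t)}{(t^2-1)(t^2-t)}.
$$

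Next I would project $H\to \PP W$ and stratify $\PP W$ by the corank $2k$ of the form, writing $P_k$ for the stratum of corank $2k$; these partition $\PP W$, so $\sum_k E(P_k)=E(\PP W)=\frac{t^n-1}{t-1}$. The fiber $F_k$ over $w$ of corank $2k$ is the hypersurface $\{T_2: w\big\vert_{T_2}=0\}$ in $G(2,V)$, computed exactly as in Lemma \ref{fk}: pass to the $GL(2,\CC)$-bundle $B_k$ of independent pairs $(v_1,v_2)$ with $w(v_1,v_2)=0$ and split according to whether $v_1\in {\rm Ker}(w)$, now of dimension $2k$. This gives
$$
E(B_k) = (t^{2k}-1)(t^n-t) + (t^n-t^{2k})(t^{n-1}-t) = (t^{2k}-1)\,t^{n-1}(t-1) + (t^n-1)(t^{n-1}-t),
$$
so that, dividing by $E(GL(2,\CC))=(t^2-1)(t^2-t)$,
$$
E(F_k) = \frac{t^{2k}-1}{t^2-1}\,t^{n-2} + \frac{(t^n-1)(t^{n-2}-1)}{(t-1)^2(t+1)}.
$$
The even analog of Lemma \ref{fk1} (the same frame-bundle computation over $P_k$) gives $E(H)=\sum_k E(P_k)E(F_k)$; since the second term of $E(F_k)$ is independent of $k$, I pull it out using $\sum_k E(P_k)=\frac{t^n-1}{t-1}$.

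Equating the two expressions for $E(H)$ and isolating $E(X_W)\,t^{n-1}$, everything except the sum $t^{n-2}\sum_k E(P_k)\frac{t^{2k}-1}{t^2-1}$ collects into the $k$-independent quantity
$$
\frac{(t^n-1)(t^{n-2}-1)}{(t-1)^2(t+1)}\cdot\frac{t^n-1}{t-1} - \Big(\frac{t^{n-1}-1}{t-1}\Big)\frac{(t^n-1)(t^n-t)}{(t^2-1)(t^2-t)},
$$
and the whole statement reduces to showing that this equals $-\frac{(t^n-1)\,t^{n-2}}{t^2-1}$. After simplifying the Grassmannian factor to $\frac{(t^n-1)(t^{n-1}-1)}{(t^2-1)(t-1)}$ and clearing the common denominator $(t-1)^3(t+1)$, this becomes precisely the elementary identity
$$
(t^n-1)(t^{n-2}-1) - (t^{n-1}-1)^2 = -t^{n-2}(t-1)^2.
$$
Dividing the resulting relation by $t^{n-2}$ restores $t=uv$ and yields the claimed formula.

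I do not expect a genuine obstacle; the work is bookkeeping, and the one point that needs care is the parity shift. Because ${\rm Ker}(w)$ now has even dimension $2k$ rather than $2k+1$, the $k$-dependent part of $E(F_k)$ carries $t^{n-2}$ instead of $t^{n-1}$, and the $k$-independent part becomes $\frac{(t^n-1)(t^{n-2}-1)}{(t-1)^2(t+1)}$. It is exactly this change that produces the extra correction term $-\frac{(uv)^n-1}{(uv)^2-1}$, which is absent from \eqref{almost} and is the sole structural difference between the even and odd cases. The multiplicativity step is identical to Lemma \ref{fk1} and introduces nothing new.
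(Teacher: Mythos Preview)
Your argument is correct and follows essentially the same route as the paper's own proof: introduce the Cayley hypersurface, compute $E(H)$ from both projections, obtain $E(F_k)=\frac{t^{2k}-1}{t^2-1}\,t^{n-2}+\frac{(t^n-1)(t^{n-2}-1)}{(t-1)^2(t+1)}$ via the frame bundle $B_k$, and compare. Your explicit verification of the identity $(t^n-1)(t^{n-2}-1)-(t^{n-1}-1)^2=-t^{n-2}(t-1)^2$ is exactly the algebra the paper leaves implicit.
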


\begin{proof}
As before, we have the Cayley hypersurface $H\subset G(2,V)\times \PP W$ whose $E$ function is related to that of $X_W$ by 
$$
E(H;u,v) = E(X_W;u,v)(uv)^{n-1} +\Big(\frac{(uv)^{n-1}-1}{(uv)-1}\Big)\Big(\frac {((uv)^n-1)((uv)^n-uv)}{((uv)^2-1)((uv)^2-uv)}\Big). 
$$
The projection of $H$ onto $\PP W$ is a disjoint union of fibrations with fibers 
$F_k$ over the loci $P_k$ of forms of corank $2k$ for $k\geq 0$. 
The $E$-function of the fiber $F_k$ and the contribution of the locus $P_k$ 
are calculated similarly to Lemmas \ref{fk} and \ref{fk1} (the notation $F_k$ has a slightly different meaning now due to
different corank) as 
$$
E(F_k;u,v)=
\Big( \frac {(uv)^{2k}-1}{(uv)^2-1}\Big)(uv)^{n-2} + \frac {
((uv)^{n-2}-1)((uv)^n-1)}{(uv-1)^2(uv+1)}.
$$
Then we have 
$$
E(H;u,v) = \sum_{k\geq 0}\Big( \frac {(uv)^{2k}-1}{(uv)^2-1}\Big)(uv)^{n-2} E(P_k;u,v) 
+
\frac {((uv)^{n-2}-1)((uv)^n-1)^2}{(uv-1)^3(uv+1)}
$$
and
$$
(uv)E(X_W;u,v) = \sum_{k\geq 0}\Big( \frac {(uv)^{2k}-1}{(uv)^2-1}\Big) E(P_k;u,v) 
 - \frac{(uv)^n-1}{(uv)^2-1}.
$$
\end{proof}

Thus, it is natural to expect that 
\begin{equation}\label{maybe}
(uv)E(X_W;u,v) = E_{st}(Y_W;u,v)
 - \frac{(uv)^n-1}{(uv)^2-1}
\end{equation}
which is equivalent to the statement that 
the stringy Hodge numbers $h^{p,q}_{st}(Y_W)$ are well defined and there holds
$$
h^{p,q}_{st}(Y_W)=\left\{
\begin{array}{ll}
h^{p-1,q-1}(X_W)+1,&p=q{\rm ~and~}p{~\rm is~even}\\
h^{p-1,q-1}(X_W),&{\rm else}.
\end{array}
\right.
$$
The formula \eqref{maybe} would follow from Proposition \ref{almost1} as long as the local contribution of 
the singularity of $Pf$ along $P_k$ is given by
$$
\frac {(uv)^{2k}-1}{(uv)^2-1}.
$$
This certainly holds for the nonsingular points, i.e. $k=1$. However, it \emph{fails} for the $k=2$ locus!
Specifically, we have the following.
\begin{proposition}
The singularities of $Y_W$ along the locus $P_2$ have a Zariski locally trivial resolution with the fibers isomorphic to $G(2,4)$.
The discrepancy is $3$. The local contribution of the singularity is given by 
$$
\frac {(uv)^2+uv+1}{uv+1}
$$
and \emph{is not} a polynomial.
\end{proposition}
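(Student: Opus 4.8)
The plan is to reduce everything to a local computation at a generic point of $P_2$: identify the transverse singularity of $Pf(V)$ along $P_2$ with the affine cone over $G(2,4)$, resolve it by a single blowup of the vertex, read off the discrepancy of the resulting exceptional divisor, and then assemble the local contribution using Definition~\ref{s}. The hard part will be the first step; once the transverse geometry is pinned down, the rest is routine.

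First I would fix a generic $\CC w_0\in P_2$, so that $w_0$ has corank $4$, and split $V=K\oplus U$ with $K={\rm Ker}(w_0)$ of dimension $4$ and $w_0$ nondegenerate on $U$. Writing a nearby form in block form $\left(\begin{smallmatrix} A & B \\ -B^T & D\end{smallmatrix}\right)$ relative to $K\oplus U$, with $D$ invertible near $w_0$, the Schur-complement identity for Pfaffians gives $\mathrm{Pf}=\mathrm{Pf}(D)\,\mathrm{Pf}(A-BD^{-1}B^T)$. Since the factor $\mathrm{Pf}(D)$ is a unit near $w_0$ and the assignment of the $4\times 4$ skew matrix $A-BD^{-1}B^T$ is a submersion whose zero locus is exactly $P_2$, the equation of $Pf(V)$ on a transverse slice to $P_2$ is the $4\times 4$ Pfaffian itself. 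In the six coordinates on $\Lambda^2 K^\dual\cong\CC^6$ this is the quadratic form $af-be+cd$, whose zero locus is the quadric cone over the Klein quadric $G(2,4)\subset\PP^5$. Because $GL(V)$ acts transitively on $P_2$, this picture is uniform along $P_2$, and the unipotent-subgroup trick from the proof of Proposition~\ref{oddeven} upgrades the fibration to a Zariski locally trivial one. Blowing up the vertex of the transverse cone then resolves the singularity, and the exceptional fiber over $\CC w_0$ is the whole quadric $G(2,4)$, as claimed.

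Next I would compute the discrepancy of the single exceptional divisor $E\cong G(2,4)$. Writing $C$ for the transverse cone and $\widetilde C\to C$ for the blowup of its vertex, $\widetilde C$ is the total space of $\mathcal O_X(-1)$ over $X=G(2,4)$ with hyperplane class $H$, so the canonical class $K_{\widetilde C}$ is the projection pullback of $K_X+H$, while $E$ restricts to $-H$ on itself. Intersecting the relation $K_{\widetilde C}=\rho^*K_C+a\,E$ for the contraction $\rho:\widetilde C\to C$ with a line $\ell\subset E$ satisfying $H\cdot\ell=1$, and using $\rho^*K_C\cdot\ell=0$, yields $a=-1-K_X\cdot\ell$. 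For the four-dimensional quadric $X=G(2,4)$ one has $K_X=-4H$, hence $K_X\cdot\ell=-4$ and $a=3$, which is the asserted discrepancy.

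Finally I would substitute into Definition~\ref{s}. Over a generic point of $P_2$ only the divisor $E$ occurs, the entire fiber is $E\cong G(2,4)$, and the empty stratum contributes nothing, so the local contribution is $E(G(2,4);u,v)\cdot\frac{uv-1}{(uv)^4-1}$. Using $E(G(2,4);u,v)=1+uv+2(uv)^2+(uv)^3+(uv)^4$ together with the factorization $(uv)^4-1=(uv-1)(uv+1)((uv)^2+1)$, this collapses to $\frac{(uv)^2+uv+1}{uv+1}$. This is not a polynomial, since its numerator equals $1$ at $uv=-1$; in particular it differs from the naive value $\frac{(uv)^4-1}{(uv)^2-1}=(uv)^2+1$ needed for \eqref{maybe}, which is exactly the phenomenon the proposition records.
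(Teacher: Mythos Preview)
Your argument is correct and complete, though it follows a different route from the paper's. The paper reduces to $\dim V=6$ and works globally with the resolution $\widehat{Pf}=\PP\Lambda^2 Q_2^\dual\to Pf(V)$ as a bundle over $G(2,V)$; it identifies the exceptional divisor $D$ as the relative rank-$2$ locus, then computes the discrepancy by comparing $K_{\widehat{Pf}}=-12\pi^*\xi+\alpha D$ with the projective-bundle formula and intersecting with a line in a fiber to obtain $2\alpha=6$. You instead work purely locally: the Schur-complement factorization of the Pfaffian pins down the transverse slice as the affine cone over the Klein quadric $G(2,4)\subset\PP^5$, and then the standard cone discrepancy formula $a=-1-K_X\cdot\ell$ with $K_{G(2,4)}=-4H$ gives $a=3$ immediately. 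Your approach is arguably more transparent for this isolated computation, while the paper's is in keeping with the global complete-skew-forms machinery used throughout.

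One cosmetic slip: the Schur complement for a skew matrix with lower-left block $-B^T$ is $A+BD^{-1}B^T$, not $A-BD^{-1}B^T$ (since $D^{-1}$ is skew, $BD^{-1}B^T$ is already skew). This does not affect your argument, because at $w_0$ one has $B=0$ and the map $A\mapsto A\pm BD^{-1}B^T$ linearizes to the identity either way, so it is still a submersion onto $\Lambda^2K^\dual$ with the same zero locus $\{\mathrm{Pf}=0\}$.
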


\begin{proof}
Since the calculation depends only on the singularity, we may as well consider the locus of 
forms of rank $2$ in $\PP\Lambda^2 V$ for $\dim V=6$. When we blow it up, we get a 
resolution of singularities $\pi:\widehat{Pf}\to Pf(V)$ isomorphic to the bundle $\PP \Lambda^2 Q_2^\dual$ 
over the Grassmannian $G(2,V)$. The map to $Pf(V)$ is given by interpreting a form on $Q_2$ as 
a form on $V$. The exceptional divisor $D$ is the locus of forms of rank $2$ on $Q_2$.

\smallskip
We have 
$$K_{\widehat{Pf}}=\pi^*K_{Pf} + \alpha D = -12 \pi^*\xi + \alpha D
$$
where we use the fact that $Pf(V)$ is a hypersurface of degree $3$ in $\PP^{14}$ and denote
by $\xi$ the hyperplane class.
For the map $\mu: \widehat{Pf}\simeq \PP \Lambda^2 Q_2^\dual\to G(2,V)$  we have
$$
K_{\widehat{Pf}} = -6c_1(\mathcal O(1)) + \mu^* c_1(\Lambda^2 Q_2).
$$
We now observe that $c_1(\mathcal O(1))= \pi^*\xi$ to get
$$
\alpha D = 6 c_1(\mathcal O(1))+\mu^* c_1(\Lambda^2 Q_2).
$$
The divisor $D$ is a degree two hypersurface in the fibers of $\mu$, namely a $G(2,4)\subset\PP^5$.
Thus, it intersects the line in the fiber by $2$, and we get
$2\alpha = 6$ thus $\alpha=3$. 

\smallskip
The rest of the statement of the Proposition follows from Definition \ref{s} of the local contribution
and the standard formula $E(G(2,4);u,v)=(uv)^4+(uv)^3+2(uv)^2+uv+1$.
\end{proof}

\begin{remark}
As a consequence of the above calculation, the equation \eqref{maybe} fails for $n=8$, since the contribution
of $P_2$ is different from the one that's needed, and $P_{\geq 3}$ are empty.
\end{remark}

\begin{remark}
The desired contribution $\frac{(uv)^{2k}-1}{(uv)^2-1}$ of the singularity of $P_k$ would be achieved,
if the definition of the stringy Hodge numbers were given with different discrepancies. For example,
in the $k=2$ case the discrepancy $2$ would result in the local contribution
$$
E(G(2,4);u,v)\Big(\frac{uv-1}{(uv)^3-1}\Big) = (uv)^2+1
$$
as desired.
More generally, one can construct a resolution of $Pf(V)$ as a proper preimage of the Pfaffian divisor
in the space of complete skew forms on $V$. The corresponding exceptional divisors $D_k$ correspond 
to forms of corank $2k$. We would have the desired equality of the $E$-functions if the discrepancies of 
$D_k$ were $2k^2-3k$. However, a calculation similar to the one in Section \ref{sec.log} shows that the 
discrepancies are $2k^2-2k-1$. Thus it appears that in order to understand generalized double mirror phenomenon for varieties of different dimensions, one needs to adjust the definition 
of stringy Hodge numbers! We hope to explore this observation in the future, but at this moment, it remains a
puzzling phenomenon.
\end{remark}

\section{General Pfaffian double mirror correspondence}\label{sec7}
In this section we adapt the techniques of Sections \ref{sec4} and \ref{proof}  to the case of double mirror complete intersections
in general Pfaffian varieties of forms on the spaces of odd dimension.

\begin{definition} Let $V$ be  a complex vector space of odd dimension $n\geq 5$. 
For any $k\in \{1,\ldots, \frac 12(n-1)\}$ we define the variety $Pf(2k,V)$ to be the subvariety of the 
space $\PP\Lambda^2V^\dual$ of nontrivial skew forms on $V$ up to scaling, defined by the condition
that the rank of the form does not exceed $2k$. We define $Pf^\circ(2k,V)$ to be the locally closed subset
of forms of rank exactly $2k$. If the dependence on $V$ is not important, we will simply use $Pf(2k,n)$ 
and $Pf^\circ(2k,n)$.
\end{definition}

\begin{definition}\label{defgen}
Assume  that $k\neq \frac 12(n-1)$.
Let $W\subset \Lambda^2 V^\dual$ be a generic subspace of dimension $nk$. Then we can define two varieties
$X_W$ and $Y_W$ as follows. 
\begin{itemize}
\item
$X_W$ is the set of forms $y\in Pf(2k,V^\dual)$ such that $\langle y, w\rangle=0$ for all $w\in W$. Here 
we use the natural nondegenerate pairing between $\Lambda^2 V$ and $\Lambda^2 V^\dual$.
\item 
$Y_W$ is the complete intersection of $\PP W$ and $Pf(n-1-2k, V)$ in $\PP(\Lambda^2V^\dual)$.
\end{itemize}
\end{definition}

\begin{remark}\label{switch}
Definition \ref{defgen} contains the definition of Section \ref{sec2} as a special case $k=1$. We can also observe
that interchange of $(V,k,W)$ with $(V^\dual, \frac 12(n-1)-k,Ann(W))$ switches $X_W$ and $Y_W$. 
\end{remark}

The following result is well known to the experts but we could not find a suitable reference.
\begin{proposition}\label{kpf}
For positive integers $k,n$ with $2k\leq n$, the variety $Pf(2k,V^\dual)$ is Gorenstein, with anticanonical class given by $kn \xi$ where 
$\xi$ is the pullback of the hyperplane class of $\PP\Lambda^2V$.
\end{proposition}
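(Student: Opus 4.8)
I would prove this by resolving $Pf(2k,V^\dual)$ with a Kempf-type collapsing and reading off the canonical class there. Let $\mathcal T$ and $\mathcal Q$ denote the tautological sub- and quotient bundles on the Grassmannian $G(2k,V)$, and consider
$$
\pi:\widehat{Pf}:=\PP(\Lambda^2\mathcal T)\longrightarrow Pf(2k,V^\dual)\subset\PP\Lambda^2V,
$$
the projectivization of the rank $N=\binom{2k}{2}$ bundle $\Lambda^2\mathcal T\subset\Lambda^2V\otimes\mathcal O$, with $\pi$ the ``support'' map sending a line $\CC\phi\subset\Lambda^2 T$ to $\CC\phi\in\PP\Lambda^2V$. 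A bivector of rank exactly $2k$ has a unique $2k$-dimensional support, so $\pi$ is an isomorphism over the open locus $Pf^\circ(2k,V^\dual)$ and is therefore birational; since $\widehat{Pf}$ is a projective bundle over a smooth base it is smooth, so $\pi$ is a resolution. The first thing I would record is the singularity-theoretic input: by the general theory of collapsing of homogeneous bundles (Kempf's geometric technique), the rank locus $Pf(2k,V^\dual)$ is normal and Cohen--Macaulay with rational singularities, so that $\pi_*\mathcal O_{\widehat{Pf}}=\mathcal O_{Pf}$ and $\pi_*\omega_{\widehat{Pf}}=\omega_{Pf}$, the latter a reflexive ($S_2$) sheaf. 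This is the step I expect to be the real obstacle, in the sense that it is where one must invoke (or reprove) external machinery rather than a direct computation; everything after it is bookkeeping.

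Next I would compute $K_{\widehat{Pf}}$ on the projective bundle. Writing $\eta:=c_1(\mathcal O_{\widehat{Pf}}(1))=\pi^*\xi$ for the relative hyperplane class, the relative canonical formula gives $-K_{\widehat{Pf}/G}=N\eta+\pi^*c_1(\Lambda^2\mathcal T)$, and since $c_1(\Lambda^2\mathcal T)=(2k-1)c_1(\mathcal T)=-(2k-1)c_1(\mathcal Q)$ together with $-K_{G(2k,V)}=n\,c_1(\mathcal Q)$ we obtain
$$
-K_{\widehat{Pf}}=N\eta+(n-2k+1)\,\pi^*c_1(\mathcal Q).
$$
The exceptional locus of $\pi$ is the single irreducible divisor $E$ of fibrewise-degenerate forms, cut out by the Pfaffian: the tautological inclusion $\mathcal O(-1)\hookrightarrow\pi^*\Lambda^2\mathcal T$ gives a section of $\mathcal O(k)\otimes\pi^*\det\mathcal T$ whose vanishing is $E$, so $[E]=k\eta+\pi^*c_1(\mathcal T)=k\eta-\pi^*c_1(\mathcal Q)$. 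Substituting, one checks the clean identity $-K_{\widehat{Pf}}=kn\,\eta-(n-2k+1)\,E$. In particular the discrepancy of $E$ is $n-2k+1>0$, so $Pf(2k,V^\dual)$ even has terminal Gorenstein singularities, a byproduct worth stating.

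Finally I would descend. Restricting $-K_{\widehat{Pf}}=kn\,\eta-(n-2k+1)E$ to $\pi^{-1}(Pf^\circ(2k,V^\dual))$, where $E$ is empty and $\eta=\pi^*\xi$, shows $\omega_{Pf^\circ}\cong\mathcal O(-kn)|_{Pf^\circ}$. The complement $Pf(2k,V^\dual)\setminus Pf^\circ(2k,V^\dual)=Pf(2k-2,V^\dual)$ has codimension $2(n-2k)+1\ge 3$, so it is disjoint from a set of codimension $\ge 2$; since $\omega_{Pf}$ is reflexive and agrees with the line bundle $\mathcal O_{Pf}(-kn)$ off this set, the two reflexive sheaves coincide, whence $\omega_{Pf}\cong\mathcal O_{Pf}(-kn)$ is invertible. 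Invertibility of the dualizing sheaf together with the Cohen--Macaulay property is exactly the Gorenstein condition, and the isomorphism identifies the anticanonical class as $kn\,\xi$, completing the proof. (Here I am implicitly in the range in which $\pi$ is genuinely birational, i.e. $2k<n$, which is the only case occurring in Definition \ref{defgen}; the degenerate case where $Pf(2k,V^\dual)$ fills $\PP\Lambda^2V$ is trivial and computed directly.)
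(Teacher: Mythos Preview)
Your proof is correct and follows essentially the same approach as the paper: the resolution $\PP(\Lambda^2\mathcal T)\to Pf(2k,V^\dual)$ over $G(2k,V)$ is exactly the paper's $\PP\Lambda^2 Q^\dual$ over $G(n-2k,V^\dual)$ under the annihilator identification, and your computations of $K_{\widehat{Pf}}$ and $[E]$ match the paper's line for line. The only difference is in the final step: the paper cites \cite{JPW} for the Gorenstein property and then solves for the canonical class by observing that $\pi^*K_{Pf}+nk\xi=(n+1-2k-\alpha)E$ forces both sides to vanish, whereas you source Cohen--Macaulayness from Kempf's collapsing and deduce Gorenstein together with the formula via reflexive extension from the smooth locus; both endgames are standard and equivalent here.
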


\begin{proof}
Consider the non-log resolution $\pi:\PP \Lambda^2 Q^\dual \to Pf(2k,V^\dual)$
given by the space of pairs $(V_1,w)$ where $V_1$ is an $n-2k$ dimensional
subspace of $V^\dual$ in the kernel of $w\in Pf(2k,V^\dual)$. Here $\PP \Lambda^2 Q^\dual$ is the projective bundle
over the Grassmannian $G(n-2k,V^\dual)$. We denote this map by $\mu$.

\smallskip
By a standard calculation, we get
$$
K_{\PP \Lambda^2 Q^\dual} = -(n+1-2k)\mu^*c_1(Q) - k(2k-1)\xi
$$
where $\xi=c_1({\mathcal O}(1))$. The exceptional divisor $E$ of $\pi$
is the locus of degenerate forms in $\PP \Lambda^2 Q^\dual$, which is
the locus where the $k$-th power of the natural map $\mu^* \Lambda^2 Q \to
{\mathcal O}(1)$ is zero. This $k$-th power is a map
$$
\mu^*\Lambda^{2k} Q \to {\mathcal O}(k)
$$
which means that $c_1(E)=k\xi - \mu^*c_1(Q)$.

\smallskip
Since $Pf(2k,V^\dual)$ is Gorenstein (see \cite{JPW}) we have
$$
\pi^* K_{Pf(2k,V)} = K_{\PP \Lambda^2 Q^\dual} - \alpha c_1(E)
$$
for some $\alpha$. In view of the above calculations, this
is equivalent
to
$$
\pi^*K_{Pf(2k,V)} +nk\xi = (n+1-2k-\alpha)c_1(E).
$$
Since $E$ is the exceptional divisor, and the left hand side is a pullback from $Pf(2k,V^\dual)$, both sides are zero.
\end{proof}

\begin{corollary}
The varieties $X_W$ and $Y_W$ are Gorenstein with trivial canonical class of dimension
$nk-2k^2-k-1$.
\end{corollary}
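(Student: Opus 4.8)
The plan is to realize both $X_W$ and $Y_W$ as generic linear sections of Pfaffian varieties of the type handled by Proposition \ref{kpf}, and then to extract the Gorenstein property and triviality of the canonical class from adjunction. First I would identify $X_W$ with $Pf(2k,V^\dual)\cap \PP(Ann(W))$ inside $\PP\Lambda^2 V$: since the pairing between $\Lambda^2 V$ and $\Lambda^2 V^\dual$ is nondegenerate, the conditions $\langle y,w\rangle=0$ for all $w\in W$ cut out precisely the linear subspace $\PP(Ann(W))$, which has codimension $\dim W = nk$ in $\PP\Lambda^2 V$. For generic $W$ the space $Ann(W)$ is a generic linear subspace of that codimension. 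Stratifying $Pf(2k,V^\dual)$ by rank, the top stratum fibers over $G(n-2k,V^\dual)$ with fiber the projectivized nondegenerate skew forms on a $2k$-dimensional space, which gives $\dim Pf(2k,V^\dual)=2kn-2k^2-k-1$. Since the hypothesis $k\neq \frac12(n-1)$ forces $1\leq k\leq \frac12(n-1)-1$, this dimension strictly exceeds $nk$, so a generic codimension-$nk$ linear section is nonempty of the expected dimension $nk-2k^2-k-1$.

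The crux of the argument is dimensional transversality of this section. Here I would invoke the classical fact that a generic linear subspace of codimension $c\leq \dim Z$ meets an irreducible projective variety $Z$ in codimension exactly $c$; applied to the irreducible $Z=Pf(2k,V^\dual)$ and $c=nk$, this shows $X_W$ has dimension $nk-2k^2-k-1$ and that the $nk$ linear forms defining $\PP(Ann(W))$ form a regular sequence on the (Cohen--Macaulay) local rings of $Z$. The main obstacle is precisely this transversality, i.e.\ controlling the intersection with the singular locus $Pf(2k-2,V^\dual)$; a dimension count shows this singular locus has codimension $2n-4k+1$ in $Z$, so its generic linear section of codimension $nk$ still has strictly smaller dimension than $X_W$, and nothing forces $X_W$ into a nonreduced or lower-dimensional position.

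Granting transversality, a complete intersection cut by a regular sequence inside a Gorenstein scheme is again Gorenstein, so $X_W$ is Gorenstein. Iterated adjunction for the Cartier hyperplane sections then gives $K_{X_W}=\bigl(K_{Pf(2k,V^\dual)}+nk\,\xi\bigr)\big|_{X_W}$, and by Proposition \ref{kpf} the anticanonical class of $Pf(2k,V^\dual)$ equals $nk\,\xi$, so the right-hand side vanishes and $K_{X_W}=\mathcal O_{X_W}$.

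Finally, for $Y_W$ I would apply the duality of Remark \ref{switch}: interchanging $(V,k,W)$ with $(V^\dual,\frac12(n-1)-k,Ann(W))$ turns $Y_W$ into the $X$-variety attached to the dual data, which is again generic. Writing $k'=\frac12(n-1)-k$ and reapplying the argument above yields that $Y_W$ is Gorenstein with trivial canonical class of dimension $nk'-2k'^2-k'-1$. Substituting $n=2(k+k')+1$ gives $nk'-2k'^2-k'-1=2kk'-1=nk-2k^2-k-1$, which matches the claimed dimension and completes the proof.
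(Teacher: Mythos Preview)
Your proof is correct and follows essentially the same route as the paper: apply Proposition~\ref{kpf} together with adjunction to handle $X_W$, then invoke the duality of Remark~\ref{switch} to deduce the statement for $Y_W$. You have spelled out in detail several points the paper leaves implicit (the dimension count for $Pf(2k,V^\dual)$, the transversality of the generic linear section, and the verification that $nk'-2k'^2-k'-1=nk-2k^2-k-1$), but the underlying argument is the same.
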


\begin{proof}
We have already observed this in the case $k=1$ and $k=\frac 12(n-3)$.
In general,
Proposition \ref{kpf} implies the statement for $X_W$ by the adjunction formula. The statement for $Y_W$ then follows from Remark \ref{switch}.
\end{proof}

We would like to state the following meta-mathematical conjecture for a vector space $V$ of odd dimension $n$, a number 
$k\in \{1,\ldots, \frac 12(n-3)\}$ and a generic subspace $W\subset \Lambda^2 V^\dual$ of dimension $nk$:

\medskip
{\begin{center}\emph{
The varieties $X_W$ and $Y_W$ are double mirror to each other.}
\end{center}}

\smallskip
\begin{remark}
The above conjecture can be viewed as a natural generalization of R\o dland's work. Note also that 
Kuznetsov has stated the analogous conjecture for the derived categories \cite[Conjecture 4.9]{Kuznetsov2}.
The best evidence in favor of this conjecture is provided by the Theorem \ref{main.main} below, which is one of the 
expected consequences of double mirror property. It generalizes our Theorem \ref{main}.
\end{remark}

\begin{theorem}\label{main.main}
The varieties $X_W$ and $Y_W$ have well-defined stringy Hodge numbers. Moreover, there holds 
$$
h_{st}^{p,q}(X_W)=h_{st}^{p.q}(Y_W).
$$
\end{theorem}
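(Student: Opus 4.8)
The plan is to mimic the strategy of Theorem \ref{main} (the $k=1$ case) but now with both sides requiring stringy corrections, and to exploit the duality of Remark \ref{switch}. By that remark, replacing $(V,k,W)$ with $(V^\dual,\frac12(n-1)-k,\mathrm{Ann}(W))$ interchanges $X_W$ and $Y_W$, so it suffices to establish a single symmetric identity expressing $E_{st}$ of the relevant complete intersection in terms of the $E$-functions of the strata $Pf^\circ(2i,V)$. First I would set up the analog of the Cayley hypersurface: since $X_W$ now sits inside $Pf(2k,V^\dual)$ rather than $G(2,V)$, I would use the non-log resolution $\PP\Lambda^2 Q^\dual \to Pf(2k,V^\dual)$ of Proposition \ref{kpf} together with the relative space of complete skew forms on the tautological quotient bundle. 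The key computation is to express $E_{st}(X_W)$ as a sum over corank strata $\sum_i E(\text{stratum}) \cdot c_i(uv)$ for some universal local-contribution factors $c_i$, and likewise for $E_{st}(Y_W)$, and then to check that the two expansions agree.

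The main technical ingredient I would need is a generalization of Theorem \ref{stpf}: a closed formula for the stringy $E$-function, or more precisely for the $S$-function in the sense of Definition \ref{s}, of the higher Pfaffian varieties $Pf(2k,V)$. The inductive argument of Theorem \ref{stpf} should generalize, peeling off the kernel of a form of rank $2i$ and recognizing the fiber of the log resolution over a corank-$m$ form as the log resolution of a lower Pfaffian $Pf(2k', \CC^{m})$, with an extra discrepancy factor coming from the additional exceptional divisor. To make this work I would first need the analog of Proposition \ref{disc} for the higher Pfaffians: the log resolution via complete skew forms on the quotient bundle over $G(n-2k,V)$, together with the discrepancies of its exceptional divisors. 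The relations of Proposition \ref{oddeven} and the Grassmannian identities of Proposition \ref{relg} would have to be replaced by their weighted analogs summing $e_{2i} g_{2i,n}$ against the appropriate kernel factors.

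The final and hardest step is the combinatorial identity that forces the two stratified expansions of $E_{st}(X_W)$ and $E_{st}(Y_W)$ to coincide. In the $k=1$ case this reduced, via \eqref{almost}, to the elementary factor $\frac{(uv)^{2k}-1}{(uv)^2-1}$ and a single application of Proposition \ref{sum}. For general $k$ the local contributions $S(y;u,v)$ will be more intricate rational functions of $uv$ indexed by the corank, and matching the $X_W$-side sum against the $Y_W$-side sum will require a genuinely nontrivial $q$-hypergeometric identity in $q=uv$. This is precisely where I expect the real obstacle to lie, and it is consistent with the authors' acknowledgement of Hjalmar Rosengren's help with $q$-hypergeometric identities and the relegation of technical results to Section \ref{Appendix}. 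I would isolate this identity as a standalone lemma, prove it by induction or by a known summation formula (Jacobi triple product or a $q$-binomial theorem variant), and then assemble the pieces: well-definedness of the stringy Hodge numbers follows once both stringy $E$-functions are shown to be genuine polynomials in $u,v$, and their equality follows from the matched expansions.
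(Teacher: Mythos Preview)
Your proposal is correct and matches the paper's approach closely: the paper defines the Cayley hypersurface $H\subset Pf(2k,V^\dual)\times\PP W$, introduces a weighted $E$-function $\tilde E(H)$ by stratifying by rank and weighting each stratum with the local contribution $S(p,k,n;uv)$ of \eqref{ss}, and then computes $\tilde E(H)$ via both projections, using the generalized stringy formula for $Pf(2k,n)$ (Theorem \ref{pfst2k}, with discrepancies from Proposition \ref{discpf.all}) on the $Pf(2k,V^\dual)$ side and a closed formula for the weighted $E$-function of a hyperplane section $Pf(2k,n)\cap\{\langle\cdot,\alpha_i\rangle=0\}$ (Proposition \ref{app.main}) on the $\PP W$ side. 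Two small sharpenings: the Cayley hypersurface is set up directly in the singular $Pf(2k,V^\dual)\times\PP W$ rather than on a resolution (the non-log resolution of Proposition \ref{kpf} enters only later, in Proposition \ref{newrec}, to derive a recursion for the fiber polynomials $f_{k,i,n}$); and the decisive $q$-identity is not a $q$-binomial or triple-product formula but a combination of Heine's ${}_2\phi_1$ summations and Jain's ${}_3\phi_1$ transformation.
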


We delay the proof of Theorem \ref{main.main} until later. First, we need to extend the results of Sections 
\ref{sec2}, \ref{sec.log} and \ref{sec4} to this more general setting. We begin with the discussion of log resolutions of Pfaffian varieties 
$Pf(2k,n)$. 

\begin{definition}
Let $2k$ and $n$ be positive integers with $2k\leq n$, where we no longer assume that $n$ is odd. Let $V$ be 
a vector space of dimension $n$. We define the space $\widehat {Pf(2k,V)}$ of 
\emph{complete skew forms of rank $\leq 2k$} as the proper preimage of 
$Pf(2k,V)$ under the consecutive blowups in $\PP \Lambda^2 V^\dual$ of $Pf(2,V)$, proper preimage of $Pf(4,V)$, and so on, 
up to $Pf(2k-2,V)$.
\end{definition}

\begin{proposition}
The space $\widehat {Pf(2k,V)}$ is smooth. Its points are in one-to-one correspondence with flags
$$
0\subseteq F^0 \subset \cdots \subset F^l = V
$$
with $F^0$ of dimension $n-2k$
together with \emph{nondegenerate} forms $\CC w_i \in \PP \Lambda^2(F^{i+1}/F^i)^\dual$.
The map $\widehat{Pf(2k,V)}\to Pf(2k,V)\subset \PP\Lambda^2 V^\dual$ is given by 
interpreting a skew form on $F^{l}/F^{l-1}$ as a skew form on $F^l=V$. 
\end{proposition}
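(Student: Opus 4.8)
The plan is to establish this structure result by following the same iterated-blowup construction that underlies Propositions~\ref{BT} and the description of $\widehat{Pf(V)}$, now applied to arbitrary rank $\leq 2k$. The smoothness is immediate from the construction: at each stage of the iterated blowup, one blows up the proper preimage of the locus of forms of rank exactly $2,4,\ldots,2k-2$, and these loci are smooth (the locus of forms of corank $m$ is a smooth locally closed subvariety of $\PP\Lambda^2 V^\dual$, as recalled at the start of Section~\ref{sec.log}). A blowup with smooth center of a smooth variety is smooth, so $\widehat{Pf(2k,V)}$ is smooth. The substantive content is the identification of points with the flag data, and I would prove this by induction on $k$, or equivalently by analyzing the blowups one at a time.

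First I would record the base of the induction: for $k=1$, no blowups are performed, $\widehat{Pf(2,V)} = Pf(2,V) = G(2,V)$ in its Pl\"ucker embedding, and a point is exactly a rank $2$ form $\CC w$, i.e.\ a choice of $F^0 = \mathrm{Ker}(w)$ of dimension $n-2$ together with a nondegenerate form on $V/F^0 \cong \CC^2$. This matches the claimed description with $l=1$. For the inductive step I would analyze the last blowup. Suppose the space $\widehat{Pf(2k-2,V)}$ of complete forms of rank $\leq 2k-2$ has the stated flag description. To obtain $\widehat{Pf(2k,V)}$ we take the proper preimage of $Pf(2k,V)$, but it is cleaner to describe $\widehat{Pf(2k,V)}$ directly as the proper transform of $Pf(2k,V)$ under the sequence of blowups and to relate it to the relative complete-forms construction. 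The key geometric observation, exactly as in the Remark following the first description of $\widehat{Pf(V)}$, is that $\widehat{Pf(2k,V)}$ is the relative space of complete skew forms on the universal quotient bundle $Q$ of rank $2k$ over the Grassmannian $G(n-2k,V)$: a generic point specifies a subspace $F^0$ of dimension $n-2k$ (the kernel of the form) together with a nondegenerate form on $V/F^0$, and the degenerations along the exceptional divisors refine $F^0$ into a longer flag.

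Concretely, I would argue as follows. The map sending a point to the kernel subspace $F^0$ of dimension $n-2k$ realizes $\widehat{Pf(2k,V)}$ over $G(n-2k,V)$, and over this Grassmannian the fiber is the space of complete skew forms on the $2k$-dimensional quotient $V/F^0$ in the sense of Proposition~\ref{BT} (absolute case, even dimension $2k$). By Proposition~\ref{BT} applied fiberwise, those fibers parametrize flags $F^0 \subset \cdots \subset F^l = V$ refining the chosen $F^0$, together with nondegenerate forms $\CC w_i \in \PP\Lambda^2(F^{i+1}/F^i)^\dual$, where the innermost space of the fiberwise flag is the zero subspace of $V/F^0$, i.e.\ corresponds to $F^0$ itself. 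Since $2k$ is even, the fiberwise $F^0$ (in the notation of Proposition~\ref{BT}) has dimension $0$, which is why the absolute $F^0$ here has dimension exactly $n-2k$ with no parity correction. Assembling the base $G(n-2k,V)$ with these fiberwise flags produces precisely the claimed global flag data, and the map to $Pf(2k,V)$ is the fiberwise realization of a form on $V/F^{l-1} = F^l/F^{l-1}$ as a form on $V$, matching the stated formula.

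The main obstacle I anticipate is making the relative complete-forms identification rigorous, namely checking that the iterated blowup in $\PP\Lambda^2 V^\dual$ of the rank loci $Pf(2,V), Pf(4,V),\ldots$ agrees, after taking the proper preimage of $Pf(2k,V)$, with the fiberwise absolute construction of Proposition~\ref{BT} over $G(n-2k,V)$. The content is that blowing up the ambient rank loci and then restricting to the proper transform of $Pf(2k,V)$ commutes with the Grassmannian bundle structure given by the kernel map; this requires checking that the centers of the ambient blowups intersect the proper transform of $Pf(2k,V)$ in exactly the relative rank loci of the bundle $Q$, and that the proper-preimage and fiber-product operations are compatible. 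This is the same mechanism invoked for $\widehat{Pf(V)}$ in Section~\ref{sec.log}, and I would cite \cite{Bertram, Thaddeus} for the general relative complete-forms formalism rather than reprove it; the flag description and the smoothness then follow formally, exactly as in the special cases already treated.
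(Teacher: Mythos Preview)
Your proposal is correct and takes essentially the same approach as the paper, which simply writes ``Follows from \cite{Bertram}.'' You have usefully unpacked how the relative complete-forms formalism over $G(n-2k,V)$ yields the flag description, exactly parallel to the $k=\frac12(n-3)$ case treated in Section~\ref{sec.log}. One small slip: in your base case you write $Pf(2,V)=G(2,V)$, but with the paper's convention $Pf(2,V)\subset\PP\Lambda^2V^\dual$ consists of rank $2$ forms on $V$, hence is $G(2,V^\dual)\cong G(n-2,V)$; this does not affect the argument, since your description via the kernel $F^0=\mathrm{Ker}(w)$ of dimension $n-2$ is the correct one.
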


\begin{proof}
Follows from \cite{Bertram}.
\end{proof}

\begin{proposition}\label{discpf.all}
Let $V$ be a space of odd dimension $n$ and let $k<\frac 12(n-1)$ be a positive integer.
The map $\widehat{Pf(2k,V)}\to Pf(2k,V)$ gives a log resolution of $Pf(2k,V)$. The exceptional divisors $D_j$ correspond
to loci of complete forms with a subspace of dimension $2j-1$ present in the flag. The index $j$ satisfies
$\frac 12(n+3-2k)\leq j\leq \frac 12(n-1)$. The discrepancy of the divisor $D_j$ is given by 
$$
\alpha_j = \frac 12(2j+2k-n-1)(2j-1)-1.
$$
\end{proposition}

\begin{proof}
The proof is completely analogous to that of Theorem \ref{discpf}. We consider the projective bundle $Z\to \widehat{Pf(2k,V)}$
by looking at $F^{-1}\subset F^0$ of codimension one, i.e. 
$$Z= \widehat{Pf(2k,V)}\times_{G(r,n)}{Fl(r-1,r,n)}.$$
where we use $r=n-2k$ to denote the dimension of $F^0$.
We have the commutative diagram
$$
\begin{array}{ccccccc}
&&\widehat{Pf(2k,V)}& \longleftarrow & Z&&\\
&&\downarrow &                 & \downarrow&\searrow&    \\
Pf(2k,V)&\longleftarrow&\PP \Lambda^2 Q_{r}^\dual&\longleftarrow&\PP \Lambda^2 Q_{r,Fl}^\dual&\longrightarrow&\PP \Lambda^2 Q_{r-1}^\dual \\
&&\downarrow &                 & \downarrow& & \downarrow   \\
&&G(r,V)&\longleftarrow&Fl(r-1,r,V)&\longrightarrow&G(r-1,V)
\end{array}
$$
The preimages of $D_j$ in $Z$ are denoted by $E_j$. We have the following three equalities in the Picard group of $Z$ (where we drop
the pullbacks from the notation to simplify it). We also use $r=n-2k$ and denote by $\xi$ the hyperplane class on $\PP\Lambda^2V^\dual$.
\begin{equation}\label{eq1}
K_Z = K_{Pf(2k,V)}+ \sum_j \alpha_j E_j  + K_{Fl} -K_{G(r,n)}
\end{equation}
\begin{equation}\label{eq2}
K_Z=K_{Fl}-k(2k-1)\xi+(2k-1)c_1(Q_{r})+\sum_j (\frac 12(2j-r-1)(2j-r-2)-1)E_j
\end{equation}
\begin{equation}\label{eq3}
K_Z=K_{G(r-1,n)}-k(2k+1)\xi +2k c_1(Q_{r-1}) + \sum_j (\frac 12(2j-r)(2j-r-1)-1)E_j
\end{equation}
We take a linear combination of the equations \eqref{eq1}, \eqref{eq2} and \eqref{eq3} with coefficients 
$2$, $(r-1)$ and $(-r-1)$ respectively. This implies the desired  equality 
$$\alpha_j=\frac 12(2j-r-1)(2j-1)-1$$ 
provided that 
$$
0=2K_{Fl}-2K_{G(r,n)}+(r-1)K_{Fl}+(r-1)(2k-1)c_1(Q_r)-(r+1)K_{G(r-1,n)}-2k(r+1)c_1(Q_{r-1})
$$
which is equivalent to a formula for the canonical class of the partial flag variety $Fl(r-1,r,n)$ 
$$
K_{Fl} =-rc_1(Q_{r-1})-(2k+1)c_1(Q_r)
$$
which is proved analogously to the $r=3$ case of Theorem \ref{discpf}.
\end{proof}

We now proceed to generalize the results of Section \ref{sec4}. Recall that $e_{2i}=e_{2i}(u,v)$ is the $E$-function of the space of nondegenerate forms on a $2i$-dimensional space, up to scaling, and $g_{k,n}$ is the $E$-function of the Grassmannian $G(k,n)$.
Our main technical result is the following proposition.

\begin{proposition}\label{technical}
For an odd integer $n$ and any $1\leq k\leq \frac 12(n-3)$ there holds
$$
\sum_{1\leq i\leq \frac 12(n-1) }e_{2i}g_{n-2i,n} \prod_{j=k-i+1}^{\frac 12(n-1)-i} 
\frac{(uv)^{2j}-1}{(uv)^{2j-2k+2i}-1}
=
\frac{(uv)^{nk}-1}{uv-1}
\prod_{j=k+1}^{\frac 12(n-1)}
\frac{(uv)^{2j}-1}{(uv)^{2j-2k}-1}
.
$$
\end{proposition}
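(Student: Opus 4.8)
The plan is to set $q=uv$ and reduce the stated identity to a purely $q$-hypergeometric summation that I then evaluate in closed form. Writing $n=2r+1$ and using Grassmannian duality $g_{n-2i,n}=g_{2i,2r+1}$, I first clean up the left-hand side by the substitution $m=j+i$ in the inner product, which rewrites $\prod_{j=k-i+1}^{r-i}\frac{q^{2j}-1}{q^{2j-2k+2i}-1}$ as $\prod_{m=k+1}^{r}\frac{q^{2m-2i}-1}{q^{2m-2k}-1}$. The denominator $\prod_{m=k+1}^{r}(q^{2m-2k}-1)=\prod_{s=1}^{r-k}(q^{2s}-1)$ is then independent of $i$ and can be cleared against the matching denominator on the right. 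The decisive simplification is that for $k+1\le i\le r$ the numerator product $\prod_{m=k+1}^{r}(q^{2(m-i)}-1)$ contains the vanishing factor at $m=i$, so all those terms drop out. Thus the proposition becomes equivalent to the finite identity
$$
\sum_{i=1}^{k} e_{2i}\,g_{2i,2r+1}\prod_{m=k+1}^{r}(q^{2(m-i)}-1) = \frac{q^{(2r+1)k}-1}{q-1}\prod_{m=k+1}^{r}(q^{2m}-1),
$$
in which the summation range has collapsed from $r$ terms to $k$ terms.

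Next I would insert closed formulas for the two families of $E$-functions. Since the Grassmannian carries only Hodge--Tate cohomology, $g_{2i,2r+1}=\binom{2r+1}{2i}_q$ is the Gaussian binomial coefficient. The space of nondegenerate skew forms on $\CC^{2i}$ is a single $\mathrm{GL}(2i)$-orbit with stabilizer conjugate to $\mathrm{Sp}(2i)$, so counting points over finite fields and dividing by $q-1$ for the projective scaling gives
$$
e_{2i}=\frac{q^{i(i-1)}}{q-1}\cdot\frac{\prod_{j=1}^{2i}(q^{j}-1)}{\prod_{j=1}^{i}(q^{2j}-1)}
$$
(one checks $e_2=1$ and $e_4=q^2(q^3-1)$ directly). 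Substituting these, the product $e_{2i}g_{2i,2r+1}$ collapses into a single ratio of $q$-factorials, and the left-hand side of the reduced identity becomes a terminating basic hypergeometric series in which the quadratic Gauss factor $q^{i(i-1)}$ appears.

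The hard part is the resulting $q$-series evaluation: one must show this terminating sum equals the clean product on the right. I expect it to be an instance of a balanced (Saalsch\"utzian) summation after a suitable change of base between $q$ and $q^2$, and this is precisely the kind of $q$-hypergeometric identity best handled by the techniques of Section \ref{Appendix}. As consistency checks and possible anchors for an induction, note that the boundary value $k=r$ of the reduced identity is exactly Proposition \ref{oddeven} (odd case), while $k=r-1$ recovers Proposition \ref{sum}; one could therefore attempt a downward induction on $k$ seeded by these two cases. However, I anticipate that a single induction step couples the sum to an auxiliary companion sum, so the cleanest route is the direct hypergeometric evaluation rather than a recursion, and establishing that summation is the genuine obstacle.
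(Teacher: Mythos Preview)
Your preliminary simplifications are correct: the reindexing $m=j+i$ and the observation that terms with $i>k$ vanish are both valid, and the reduced identity you write down is equivalent to the proposition. However, the proposal is not a proof: you yourself identify the evaluation of the resulting sum as ``the genuine obstacle,'' and you do not carry it out. Substituting the explicit formulas for $e_{2i}$ and $g_{2i,2r+1}$ and hoping for a standard balanced summation is a reasonable plan, but you have not verified that the series actually matches a known form, and the quadratic exponent $q^{i(i-1)}$ together with the interplay of bases $q$ and $q^2$ makes this nontrivial. As it stands, the reduction is a reformulation, not a resolution.

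The paper's proof avoids explicit formulas for $e_{2i}$ altogether by an induction on $n+k$. The boundary cases $k=1$ and $k=\frac12(n-3)$ are handled directly (the latter via Proposition~\ref{sum}, as you noticed). For the inductive step one assumes the identity for $(k-1,n)$ and for $(k,n-2)$. The $(k,n-2)$ instance is brought into the same shape by the Gaussian-binomial relation
\[
g_{n-2i-2,n-2}=g_{n-2i,n}\,\frac{((uv)^{n-2i}-1)((uv)^{n-2i-1}-1)}{((uv)^n-1)((uv)^{n-1}-1)},
\]
so that both inductive equations involve the \emph{same} unknowns $e_{2i}\,g_{n-2i,n}$, multiplied by slightly different rational factors. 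One then takes the linear combination of the two equations with coefficients
\[
(uv)^n\bigl((uv)^{n+1-2k}-1\bigr)\quad\text{and}\quad -\frac{(uv)^{2k}\bigl((uv)^n-1\bigr)\bigl((uv)^{n-1}-1\bigr)}{(uv)^{n-1-2k}-1},
\]
chosen precisely so that the $i$-dependent factors combine into the one needed for $(k,n)$. This is elementary and self-contained. Your suggested downward induction on $k$ alone indeed fails to close, but the two-parameter induction on $n+k$ does: that is the missing idea.
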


\begin{proof}
We will prove this result by induction on $n+k$. The base case is easy to establish by direct calculation, which we leave to the reader.

\smallskip
Suppose that the result of the proposition is proved for all smaller values of $n+k$. Specifically, we will use these formulas
for $(k-1,n)$ and $(k,n-2)$. We need to be careful in the situation when these pairs are not in the acceptable range of $(k,n)$.
This happens when $k=1$ or  $k=\frac 12(n-3)$. 
Note that only $i$ in the range $1\leq i\leq k$ contribute to the left hand side, so $k=1$ case follows from the formula
$$g_{n-2,n} = \frac{((uv)^n-1)((uv)^{n-1}-1)}{((uv)^2-1)((uv)-1)}.$$ 
In the $k=\frac 12(n-3)$ case the product on the  left hand side only has $j=\frac 12(n-1)-i$ and we get
$$
\sum_{1\leq i\leq \frac 12(n-1) }e_{2i}g_{n-2i,n}
\frac{(uv)^{n-2i-1}-1}{(uv)^{2}-1}
$$
which allows us to use Proposition \ref{sum} to settle this case.

\smallskip
In the general case $1<k<\frac 12(n-3)$ we may now assume the equations 
\begin{align*}
\sum_{1\leq i\leq \frac 12(n-1) }e_{2i}g_{n-2i,n} \prod_{j=k-i}^{\frac 12(n-1)-i} 
\frac{(uv)^{2j}-1}{(uv)^{2j-2k+2+2i}-1}
\\
=\frac{(uv)^{n(k-1)}-1}{uv-1}
\prod_{j=k}^{\frac 12(n-1)}
\frac{(uv)^{2j}-1}{(uv)^{2j-2k+2}-1}
\end{align*}
and
\begin{align*}
\sum_{1\leq i\leq \frac 12(n-3) }e_{2i}g_{n-2i-2,n-2} \prod_{j=k-i+1}^{\frac 12(n-3)-i} 
\frac{(uv)^{2j}-1}{(uv)^{2j-2k+2i}-1}
\\
=
\frac{(uv)^{(n-2)k}-1}{uv-1}
\prod_{j=k+1}^{\frac 12(n-1)}
\frac{(uv)^{2j}-1}{(uv)^{2j-2k}-1}
.
\end{align*}
We use the relation $g_{n-2i-2,n-2}=g_{n-2i,n}\frac{((uv)^{n-2i}-1)((uv)^{n-2i-1}-1)}{((uv)^n-1)((uv)^{n-1}-1)}$
to rewrite the above two equations as 
\begin{equation}\label{t1}
\begin{split}
\left(
\frac {(uv)^{2k-2i}-1}{(uv)^{n+1-2k}-1}
\right)
\sum_{1\leq i\leq \frac 12(n-1) }e_{2i}g_{n-2i,n} \prod_{j=k-i+1}^{\frac 12(n-1)-i} 
\frac{(uv)^{2j}-1}{(uv)^{2j-2k+2i}-1}\newline
\\
=\frac {(uv)^{nk-n}-1}{uv-1}\prod_{j=k}^{\frac 12(n-1)} \frac{(uv)^{2j}-1}{(uv)^{2j-2k+2}-1}
\end{split}
\end{equation}
and
\begin{equation}\label{t2}
\begin{split}
\left(
\frac{((uv)^{n-2i}-1)((uv)^{n-1-2k}-1)}{((uv)^n-1)((uv)^{n-1}-1)}
\right)
\sum_{1\leq i\leq \frac 12(n-1) }e_{2i}g_{n-2i,n} \prod_{j=k-i+1}^{\frac 12(n-1)-i} 
\frac{(uv)^{2j}-1}{(uv)^{2j-2k+2i}-1}\newline
\\
=\frac {(uv)^{nk-2k}-1}{uv-1}\prod_{j=k+1}^{\frac 12(n-3)} \frac{(uv)^{2j}-1}{(uv)^{2j-2k}-1}.
\end{split}
\end{equation}
The desired equality then follows from taking the linear combination of the equations \eqref{t1} and \eqref{t2} 
with coefficients
$$(uv)^n((uv)^{n+1-2k}-1),~~-\frac {(uv)^{2k}((uv)^n-1)((uv)^{n-1}-1)}{((uv)^{n-1-2k}-1)}.$$
Details are left to the reader.
\end{proof}

We are now able to prove a formula for the stringy $E$-functions of the Pfaffian varieties $Pf(2k,n)$.
\begin{theorem}\label{pfst2k}
For a vector space $V$ of  odd dimension $n$ and 
any $1\leq k\leq \frac 12 (n-1)$ there holds
$$
E_{st}(Pf(2k,V)) = \frac{(uv)^{nk}-1}{uv-1}
\prod_{j=k+1}^{\frac 12(n-1)} \frac {(uv)^{2j}-1}{(uv)^{2j-2k}-1}.
$$
\end{theorem}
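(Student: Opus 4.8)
The plan is to mimic the inductive argument of Theorem \ref{stpf}, now using the log resolution $\pi\colon\widehat{Pf(2k,V)}\to Pf(2k,V)$ with discrepancies $\alpha_j=\frac12(2j+2k-n-1)(2j-1)-1$ furnished by Proposition \ref{discpf.all}, together with the formula \eqref{Zarlogtriv} for Zariski locally trivial resolutions. I would induct on $n$ and treat $k=\frac12(n-1)$ as a trivial base case: there $Pf(2k,V)$ is all of $\PP\Lambda^2V^\dual$, it is smooth, there are no exceptional divisors, and the asserted formula reduces to $E(\PP^{\frac12 n(n-1)-1})=\frac{(uv)^{\binom n2}-1}{uv-1}$ with an empty product. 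So assume $k<\frac12(n-1)$.

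First I would stratify $Pf(2k,V)$ by the rank $2i$ of the form, $1\le i\le k$. As in the proof of Proposition \ref{oddeven}, the locus $Pf^\circ(2i,V)$ of forms of rank exactly $2i$ is a Zariski locally trivial fibration over $G(n-2i,V)$ (sending a form to its kernel) with fiber the nondegenerate forms on $\CC^{2i}$ up to scaling, so $E(Pf^\circ(2i,V))=e_{2i}g_{n-2i,n}$. The heart of the argument is the local contribution $S$ of Definition \ref{s} at a point $\CC w$ of rank $2i$. Writing $K={\rm Ker}(w)$ of dimension $n-2i$, the fiber $\pi^{-1}(\CC w)$ is naturally the space of partial complete skew forms on $K$ whose last quotient form equals $w$, which is exactly the log resolution $\widehat{Pf(2(k-i),K)}$ of the lower Pfaffian on the odd-dimensional space $K$. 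The exceptional divisors cut on this fiber come in two kinds: those arising from flag subspaces contained in $K$, which match the divisors of $\widehat{Pf(2(k-i),K)}$ and, crucially, carry the same discrepancies (since $2(k-i)-(n-2i)=2k-n$, the formula of Proposition \ref{discpf.all} is unchanged); and the single divisor $D_{j'}$ with $j'=\frac{n+1}2-i$, corresponding to the subspace $K$ of odd dimension $n-2i=2j'-1$, which contains the whole fiber. Since $\alpha_{j'}+1=\frac12(2k-2i)(n-2i)=(k-i)(n-2i)$, collecting terms in Definition \ref{s} and factoring out the constant $D_{j'}$ contribution would give
\begin{equation*}
S(\CC w;u,v)=\frac{uv-1}{(uv)^{(k-i)(n-2i)}-1}\,E_{st}\big(Pf(2(k-i),\CC^{n-2i})\big).
\end{equation*}

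Finally, substituting the inductive hypothesis for $E_{st}(Pf(2(k-i),\CC^{n-2i}))$ collapses $S$ to $\prod_{j=k-i+1}^{\frac{n-1}2-i}\frac{(uv)^{2j}-1}{(uv)^{2j-2k+2i}-1}$, the two factors $(uv)^{(k-i)(n-2i)}-1$ cancelling, and \eqref{Zarlogtriv} then yields $E_{st}(Pf(2k,V))=\sum_{i=1}^{k}e_{2i}g_{n-2i,n}\prod_{j=k-i+1}^{\frac{n-1}2-i}\frac{(uv)^{2j}-1}{(uv)^{2j-2k+2i}-1}$, where the $i=k$ term has product $1$ (each factor is $\frac{(uv)^{2j}-1}{(uv)^{2j}-1}$). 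Extending the sum harmlessly to $1\le i\le\frac12(n-1)$, each new term acquires the vanishing factor $(uv)^0-1=0$ at $j=0$, so this is precisely the left-hand side of Proposition \ref{technical}, whose value is the claimed expression. The step I expect to be most delicate is the identification of $S$: verifying that the fiber is genuinely the lower log resolution, that the shared discrepancies agree, and that exactly one extra divisor $D_{j'}$ contributes the factor $\frac{uv-1}{(uv)^{(k-i)(n-2i)}-1}$. Everything afterwards is the purely algebraic identity of Proposition \ref{technical}, which I would take as already established.
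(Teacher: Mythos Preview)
Your proposal is correct and follows essentially the same approach as the paper: stratify by rank, identify the fiber over $Pf^\circ(2i,V)$ with the log resolution of $Pf(2(k-i),\CC^{n-2i})$, use the invariance of the discrepancy formula under the simultaneous shift $(k,n)\mapsto(k-i,n-2i)$ (since it depends only on $2k-n$), pick up the extra factor from $D_{j'}$ with $2j'-1=n-2i$, and finish with Proposition \ref{technical}. The only cosmetic difference is your choice of base case $k=\tfrac12(n-1)$ (the full projective space) versus the paper's $k=1$ (the Grassmannian) with induction on $n$ for fixed corank $n-2k$; since the inductive step preserves $n-2k$ and both base cases are immediate, this is inessential.
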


\begin{proof}
We remark that the case $k=1$ is straightforward, since $Pf(2k,V)=G(2,V^\dual)$.
Otherwise, for a fixed $n-2k$ we perform induction on $n$ with the base case described above.

\smallskip
To prove the induction step, consider the log resolution of singularities 
$\widehat{Pf(2k,V)}\to Pf(2k,V)$. For each $i<k$ the strata of  $\widehat{Pf(2k,V)}$ over the space $Pf^\circ(2i,V)$ of forms of 
rank exactly $2i$ form Zariski locally trivial fibrations. The preimage of a point $\CC w\in Pf^\circ(2i,V)$ is isomorphic to the log resolution of $Pf(2k-2i, Ker(w))$. The contribution of $Pf^\circ(2i,V)$ is then seen to equal 
\begin{equation}\label{contrib.all}
E(Pf^\circ(2i,V)) E_{st}(Pf(2k-2i, Ker(w))) \frac{uv-1}{(uv)^{\alpha_{j}+1}-1}
\end{equation}
where $\alpha_j$ is the discrepancy calculated in Proposition \ref{discpf.all} to be 
$$
\frac 12(2j+2k-n-1)(2j-1)-1 = (k-i)(n-2i)
$$
since $2j-1= n-2i$. By induction assumption, the contribution \eqref{contrib.all} is equal to 
$$
E(Pf^\circ(2i,V)) \prod_{j=k-i+1}^{\frac 12(n-1)-i} \frac {(uv)^{2j}-1}{(uv)^{2j-2k+2i}-1}.
$$

\smallskip
Thus we have 
\begin{align*}
E_{st}(Pf(2k,V)) =& E(Pf^\circ(2k,V))+\sum_{1\leq i<k}E(Pf^\circ(2i,V)) \prod_{j=k-i+1}^{\frac 12(n-1)-i} \frac {(uv)^{2j}-1}{(uv)^{2j-2k+2i}-1}\\
=& \sum_{1\leq i\leq k}E(Pf^\circ(2i,V)) \prod_{j=k-i+1}^{\frac 12(n-1)-i} \frac {(uv)^{2j}-1}{(uv)^{2j-2k+2i}-1}
\\
=&\sum_{1\leq i\leq k}e_{2i}g_{n-2i,n} \prod_{j=k-i+1}^{\frac 12(n-1)-i} \frac {(uv)^{2j}-1}{(uv)^{2j-2k+2i}-1}.
\end{align*}
We can change the index of summation to $1\leq i\leq \frac 12(n-1)$ since the subsequent terms will have a zero factor in the product.
Then Proposition \ref{technical} finishes the proof.
\end{proof}

\begin{remark}
It is worth mentioning that $E_{st}(Pf(2k,V))$ is a polynomial in $uv$. Indeed, it is a product of the $E$-function of a projective space 
with a Gaussian binomial coefficient in $(uv)^2$. See Section \ref{Appendix} for more information on the latter.
\end{remark}

\section{Proof of the equality of $E$-functions: general case}\label{sec8}
In this section we prove our main result Theorem \ref{main.main} by using some of the more technical statements collected in the Appendix
(Section \ref{Appendix}).

\medskip
Recall that we are given an odd integer $n\geq 5$, a positive integer $k<\frac 12(n-1)$, a vector space $V$ of dimension $n$ 
and a generic subspace of $\Lambda^2 V^\dual$ of dimension $nk$. This allows us to define two varieties $X_W$ and $Y_W$ 
which are Calabi-Yau complete intersections in $Pf(2k,V^\dual)$ and $Pf(2k,V)$ respectively.

\begin{proposition}
Stringy Hodge numbers of $X_W$ and $Y_W$ are well-defined.
\end{proposition}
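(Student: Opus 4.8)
The plan is to verify the two requirements hidden in Definition \ref{def.Est}: that $X_W$ and $Y_W$ have log-terminal singularities, so that $E_{st}$ is defined at all, and that the resulting $E_{st}$ is a polynomial in $u,v$, so that stringy Hodge numbers can be read off. By Remark \ref{switch}, replacing $(V,k,W)$ with $(V^\dual,\frac12(n-1)-k,Ann(W))$ interchanges $X_W$ and $Y_W$, so it suffices to treat one of them; I will work with the generic Calabi-Yau complete intersection $X_W=Pf(2k,V^\dual)\cap \PP(Ann(W))$ of the type analyzed in Proposition \ref{discpf.all} and Theorem \ref{pfst2k}.

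First I would settle log-terminality. The ambient $Pf(2k,V^\dual)$ is Gorenstein and admits the log resolution $\widehat{Pf(2k,V^\dual)}\to Pf(2k,V^\dual)$ of Proposition \ref{discpf.all}, whose discrepancies $\alpha_j=\frac12(2j+2k-n-1)(2j-1)-1$ are strictly positive throughout the admissible range $\frac12(n+3-2k)\le j\le\frac12(n-1)$; the minimum is attained at the bottom of the range and equals $n+1-2k>0$. Thus $Pf(2k,V^\dual)$ has terminal Gorenstein singularities. Since $X_W$ is cut out by the generic linear system $\PP(Ann(W))$, the Bertini theorem ensures it meets the rank stratification transversally, so by adjunction it is again Gorenstein with trivial canonical class, and a log resolution of $X_W$ is obtained by intersecting $\widehat{Pf(2k,V^\dual)}$ with the complete intersection. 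The induced exceptional divisors inherit the same positive discrepancies, so $X_W$ has terminal, in particular log-terminal, singularities and $E_{st}(X_W)$ is well-defined.

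Next I would prove polynomiality. Genericity makes the restricted resolution Zariski locally trivial: over the rank locus $Pf^\circ(2i,V^\dual)\cap X_W$ the fiber is the log resolution of a smaller Pfaffian on the kernel, and the local triviality of the ambient rank-$2i$ fibration (via the subgroup trick in the proof of Proposition \ref{oddeven}) restricts to the slice. Formula \eqref{Zarlogtriv} then gives $E_{st}(X_W)=\sum_i E\big(Pf^\circ(2i,V^\dual)\cap X_W\big)\,S_i$, where $S_i$ is the local contribution of Definition \ref{s} at a point of rank $2i$. This $S_i$ is precisely the local contribution computed in the proof of Theorem \ref{pfst2k}: it equals $E_{st}$ of the smaller Pfaffian $Pf(2(k-i),n-2i)$ times the single extra discrepancy factor $\frac{uv-1}{(uv)^{\alpha_j+1}-1}$ from the additional exceptional divisor. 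Since every $E\big(Pf^\circ(2i,V^\dual)\cap X_W\big)$ is an honest Hodge-Deligne polynomial, it remains only to know that each $S_i$ is a polynomial.

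This last point is where the real content lies, and it is the same cancellation that underlies Theorem \ref{pfst2k}. By that theorem the stringy $E$-function of the smaller Pfaffian has a numerator factor $(uv)^{(k-i)(n-2i)}-1$ coming from the $\frac{(uv)^{mn'}-1}{uv-1}$ term, and the exponent $(k-i)(n-2i)$ is exactly $\alpha_j+1$ for the relevant $j$ (where $2j-1=n-2i$); hence this factor cancels the denominator $(uv)^{\alpha_j+1}-1$ of the discrepancy term and $S_i$ collapses to the Gaussian binomial ratio $\prod_{j'=k-i+1}^{\frac12(n-1)-i}\frac{(uv)^{2j'}-1}{(uv)^{2j'-2k+2i}-1}$, which is a genuine polynomial in $uv$ (just as the $k=1$ contribution simplified to $\frac{(uv)^{2t}-1}{(uv)^2-1}$ in the proof of Theorem \ref{main}). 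The main obstacle is thus the bookkeeping of the discrepancies of Proposition \ref{discpf.all} against the Gaussian-binomial shape of Theorem \ref{pfst2k} needed to certify this cancellation; granting it, $E_{st}(X_W)$ is a finite sum of products of polynomials, hence a polynomial, and by Remark \ref{switch} the same holds for $Y_W$. Therefore the stringy Hodge numbers of both $X_W$ and $Y_W$ exist.
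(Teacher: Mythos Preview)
Your argument is correct and follows essentially the same route as the paper: restrict the log resolution $\widehat{Pf(2k,V^\dual)}\to Pf(2k,V^\dual)$ to $X_W$, use its Zariski local triviality to express $E_{st}(X_W)$ via \eqref{Zarlogtriv}, identify the local contribution over each rank-$2i$ stratum as the Gaussian binomial $\prod_{j'=k-i+1}^{\frac12(n-1)-i}\frac{(uv)^{2j'}-1}{(uv)^{2j'-2k+2i}-1}$ from the proof of Theorem \ref{pfst2k}, and invoke Remark \ref{switch} for $Y_W$. You give more detail than the paper on the log-terminality step (explicitly checking $\alpha_j\ge n+1-2k>0$), which the paper leaves implicit.
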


\begin{proof}
Consider the log resolution $\widehat{Pf(2k,V^\dual)}\to Pf(2k,V^\dual)$ given by the complete skew forms on $V^\dual$ 
of rank at most $2k$. The restriction of this resolution to the preimage of $X_W$ is a 
complete intersection of $\widehat{Pf(2k,V^\dual)}$ with $\PP Ann(W)$. It gives a log resolution of
$X_W$, since $W$ is generic. Moreover, $\widehat{Pf(2k,V^\dual)}\to Pf(2k,V^\dual)$ and therefore its restriction to the preimage 
of $X_W$ are Zariski locally trivial. It remains to observe that the contributions of singular points along $Pf(2p,V^\dual)\cap X_W$
in the sense of Definition \ref{s} are calculated in the proof of Theorem \ref{pfst2k} as
\begin{equation}\label{ss}
S(p,k,n;u,v)=\prod_{j=k+1-p}^{\frac 12(n-1)-p} \frac{(uv)^{2j}-1}{(uv)^{2j-2k+2p}-1}
\end{equation}
and are polynomials in $uv$. Thus the stringy $E$-function of $X_W$ is a linear combination of products of polynomials and is a polynomial.
The statement also applies to $Y_W$ by Remark \ref{switch}.
\end{proof}

We are now ready to prove our main result, up to a technical statement on weighted $E$-functions of hyperplane cuts of 
the Pfaffian varieties, which is relegated to Section \ref{Appendix}.
\begin{proof}(of Theorem \ref{main.main})
Consider the Cayley hypersurfaces $H\subset Pf(2k,V^\dual)\times \PP W$ of the forms $w\in \Lambda^2V^\dual$ of rank $\leq 2k$ 
and forms $\alpha\in \PP W$ with the property $\langle w,\alpha \rangle =0$ where we use the natural pairing between
$\Lambda^2 V$ and $\Lambda^2 V^\dual$. The projection of $H\to Pf(2k,V^\dual)$ can be viewed as disjoint union of two Zariski locally trivial fibrations over $X_W$ and its complement. The fibration over $X_W$ has fibers $\PP W=\PP^{nk-1}$ whereas the fibration over 
the complement of $X_W$ has fibers $\PP^{nk-2}$. Note that the same statements apply to the preimage $\widehat H$ of $H$ in 
$\widehat{ Pf(2k,V^\dual)}\times \PP W$. 

\smallskip
We consider the stratification of $H$ into loci of different rank 
$$
H=\bigsqcup_{1\leq p\leq k} H_p
$$
where $H_p$ has forms $w$ of rank $2p$. We then consider the weighted sum of the $E$-polynomials of $H_p$
\begin{equation}\label{weighted}
\tilde E(H;u,v) = \sum_{p=1}^k S(p,k,n;u,v)E(H_p;u,v)
\end{equation}
with $S(p,k,n;u,v)$ the local contribution as in \eqref{ss}. The usual arguments now imply
$$
\tilde E(H;u,v) = E_{st}(Pf(2k,V)) \frac{(uv)^{nk-1}-1}{uv-1} + E_{st}(X_W)(uv)^{nk-1}
$$
and 
\begin{equation}\label{Hside}
\tilde E(H;u,v)=
\frac{((uv)^{nk-1}-1)(uv)^{nk}-1)}{(uv-1)^2} \prod_{j=k+1}^{\frac 12(n-1)}
\frac{(uv)^{2j-1}-1}{(uv)^{2j-2k}-1} 
+ E_{st}(X_W)(uv)^{nk-1}
\end{equation}
by Theorem \ref{pfst2k}.

\smallskip
As in Section \ref{proof}, we now consider the projection of $H$ to $\PP W$. For each $i$ there is a locus $Y_{W,i}$ of forms 
of rank $2i$ in $\PP W$. We have 
$$Y_W = \overline{Y_{W,n-1-2k}}= \bigsqcup_{1\leq i\leq \frac 12(n-1)-k} Y_{W,i}.$$
We observe that the fibers of the projection of $H$ onto $Y_{W_i}$ are isomorphic to the loci of skew 
forms of rank $\leq 2k$ on $V^\dual$
which are orthogonal to a specific skew form of rank $2i$ on $V$. While we do not claim that these fibrations are Zariski locally trivial, we nonetheless observe that $E$-function is multiplicative on them. Moreover, the same applies to the restrictions of the fibrations
to $H_p\subseteq H$.  Indeed, by passing to the symplectic frame bundles as in Section \ref{proof} we get a disjoint union of iterates of Zariski locally trivial fibrations. Specifically, the statement holds for the symplectic frame bundle over the universal Cayley hypersurface in $Pf^\circ(2p,V^\dual)\times Pf^\circ (2i,V)$ of $(v_1,\ldots, v_{2p})\in V^{2p}$ with $\CC w =\CC (v_1\wedge v_2 +\ldots+w_{2p-1}\wedge v_{2p})$. We add elements $v_i$ one-by one and separate the loci of different dimensions of $Ker(\alpha)\cap Span(v_1,\ldots, v_{p_1})$,  different orthogonality conditions between $v_i$-s and spans of $v_{<i}$, and whether $\sum_{i=1}^{p_1} \alpha(v_{2i-1}\wedge v_{2i}) =0$ or $\neq 0$ for various $p_1\leq p$.

\smallskip
Thus we have 
\begin{equation}\label{Yside}
\tilde E(H;u,v)
=\sum_{i=1}^{\frac 12(n-1)}
\tilde E(Pf(2k,n)\cap \langle\cdot ,\alpha_i\rangle=0)E(Y_{W,i})
\end{equation}
where $\tilde E(Pf(2k,n)\cap \langle\cdot ,\alpha_i\rangle=0)$ is defined as in \eqref{weighted} as the sum of $E$-functions 
of the loci of rank $2p$ in the hypersurface in $Pf(2k,n)$ cut out by a form $\alpha_i$ of rank $2i$.
We now use the result of Proposition \ref{app.main} that $E(Pf(2k,n)\cap \langle\cdot ,\alpha_i\rangle=0)$
is given by 
$$\frac {(uv)^{nk-1}-1}{uv-1}\prod_{j=k+1}^{\frac 12(n-1)} \frac {(uv)^{2j}-1}{(uv)^{2j-2k}-1}
+(uv)^{nk-1}S(i,\frac 12(n-1)-k,n)
$$
where 
$$S(i,\frac 12(n-1)-k,n)=\prod_{j=\frac 12(n-1)-k-i+1}^{\frac 12(n-1)-i} \frac{(uv)^{2j}-1}{(uv)^{2j-n+1+2k+2i}-1}
$$
is the contribution of the locus $Y_{W,i}$ to $E_{st}(Y_W)$. Observe that $S(i,\frac 12(n-1)-k,n)$ is zero for $i>\frac 12(n-1)-k$, 
since there is a  term $((uv)^0-1)$  in the product. Thus, equation \eqref{Yside} implies
\begin{align*}
\tilde E(H;u,v) = &
\frac {(uv)^{nk-1}-1}{uv-1}\prod_{j=k+1}^{\frac 12(n-1)} \frac {(uv)^{2j}-1}{(uv)^{2j-2k}-1}\sum_{i=1}^{\frac 12(n-1)}E(Y_{W,i})
\\
&
+(uv)^{nk-1}\sum_{i=1}^{\frac 12(n-1)-k}
S(i,\frac 12(n-1)-k,n)E(Y_{W,i})
\\
=
&
E(\PP W)\frac {(uv)^{nk-1}-1}{uv-1}\prod_{j=k+1}^{\frac 12(n-1)} \frac {(uv)^{2j}-1}{(uv)^{2j-2k}-1}
+(uv)^{nk-1}E_{st}(Y_W).
\end{align*}
Together with the equation \eqref{Hside}, this finishes the proof of Theorem \ref{main.main}.
\end{proof}

\section{Comments.}\label{sec.last}
In this section we collect several open questions raised by the construction of this paper that we hope to address in
the future.

\smallskip
\begin{remark}
Theorems \ref{main} and \ref{main.main} show the equality of the (stringy) Hodge numbers of $X_{W}$ and $Y_{W}$ but do not provide 
explicit formulas for them. In the Grassmannian case the Lefschetz hyperplane theorem on $X_W$ side implies that these numbers $h^{p,q}$ 
are zero unless $p=q$
or $p+q=n-4$, with the former coming from the restriction of the cohomology of $G(2,V)$. As a result, Hodge numbers $h^{p,q}$  are computable by a Riemann-Roch calculation for the exterior powers of cotangent bundle on $X_W$. However, we are not aware of an explicit formula. In the general Pfaffian setting, 
we do not even have a ready algorithm for computing  stringy Hodge numbers of $X_W$ and $Y_W$.
\end{remark}

\smallskip
\begin{remark}
It would be very interesting to lift the equality of numbers of Theorem \ref{main} to a statement about vector spaces. Heuristically, one 
expects a family of spaces, with a connection, that interpolates from the somehow defined stringy cohomology of $X_W$ to that of $Y_W$. 
The first step in constructing such family  is likely a construction of stringy cohomology vector space(s)
of the Pfaffian variety as indicated in Remark \ref{remspace}.
\end{remark}

\smallskip
\begin{remark}
It is natural to conjecture that for odd $n$ the appropriately defined elliptic genera of $X_W$ and $Y_W$ coincide.
We conjecture that $Ell(X_W;y,q)=Ell(Y_W;y,q)$ where elliptic genus of the singular varieties is defined in \cite{ellgen}.
Similarly, we would like to have a vector space version of this identity, which would amount to a construction of the family of vertex algebras that interpolates between the cohomology of the chiral de Rham complex  of $X_W$ and $Y_W$. In the case of one or both of these spaces being singular, one would need 
to somehow extend the definition of the chiral de Rham complex. At present, such construction is not known even for $n=7$.
\end{remark}

\smallskip
\begin{remark}
It is natural to try and understand a higher dimensional analog of mirror construction of \cite{Rodland}. We would also want to relate it 
to the work of Batyrev, Ciocan-Fontanine, Kim and van Straten \cite{Batetall}. While it is straightforward to write a conjectural one-dimensional subfamily of $Y_W$ that generalizes the one from \cite{Rodland} to higher dimensions, we do not yet understand the structure of its singularities. This prevents us from calculating the stringy Hodge numbers of the cyclic quotient, which is required in order to establish the mirror duality of stringy Hodge numbers. 
\end{remark}

\smallskip
\begin{remark}
Double mirror phenomenon predicts the equivalence of appropriately defined derived categories of $X_W$ and $Y_W$. 
These should be strongly crepant categorical resolutions of singularities in the sense of \cite{Kuznetsov}.
\end{remark}

\smallskip
\begin{remark}
Most of the calculations of the paper, including those in Section \ref{Appendix}. have analogs in the case of even $n$. As explained in Section \ref{sec.even},
one would need to redefine the stringy Hodge numbers to give them a proper geometric meaning.
\end{remark}

\section{Appendix}\label{Appendix}
The main goal of this appendix is to prove the technical statement on the weighted $E$-function of the hyperplane cut of 
the Pfaffian variety $Pf(2k,n)$ by a Pl\"ucker hyperplane $\alpha$ of rank $2i$. Our argument uses various identities of 
basic hypergeometric functions, including Jain's identity \cite{Jain}.

\medskip
Throughout the section we will use the notation $q=uv$, since all of the $E$-functions in question will depend on $uv$ only.
 We will also use the $q$-Pochhammer symbols and $q$-binomial symbols (also known as Gaussian binomial coefficients)
$$
(a;q)_k = \prod_{j=0}^{k-1}(1-aq^k),~~
\left(
\begin{array}{c}
m \\ r
\end{array}
\right)_{q}
=\left\{
\begin{array}{ll}
\frac {\prod_{i=0}^{r-1} (1-q^{m-i})}
{\prod_{i=0}^{r-1} (1-q^{i+1})},& 0\leq r\leq m\\
0,&{\rm else}
\end{array}
\right.
$$
as well as the basic (also called $q$-) hypergeometric functions
$$
_{r}\phi_s
\left(
\begin{array}{c}
a_1,\ldots,a_r\\
b_1,\ldots,b_s
\end{array}
;q,z
\right)
=\sum_{n\geq 0}
\frac {(a_1;q)_n\cdots (a_r;q)_n}
{(q;q)_n(b_1;q)_n\cdots(b_s;q)_n}
((-1)^nq^{\frac12 n(n-1)})^{1+s-r}z^n.
$$
Our main reference is the book of Gasper and Rahman \cite{GR}.

\begin{remark}\label{tohyper}
The terms of the series $\sum_{n\geq 0}c_n$ used to define a basic hypergeometric function have the 
property that the ratio of consecutive terms $c_{n+1}/c_n$ is a rational function of $q^n$. 
Specifically, one has (see also \cite[equation 1.2.26]{GR})
$$
\frac {c_{n+1}}{c_n} = 
\frac
{(1-a_1q^n)\cdots(1-a_rq^n)}
{(1-q^{n+1})(1-b_1q^n)\cdots(1-b_sq^n)}
(-q^n)^{1+s-r}z.
$$
Vice versa,
any series with such a recursive relation can be written as the product of $c_0$ 
with a hypergeometric function that encodes the inverse roots of the aforementioned rational function.
\end{remark}

\begin{remark}
All of the basic hypergeometric functions in this paper are terminating, which means that the terms 
of the series are eventually zero. Thus, convergence is never an issue. 
\end{remark}

\medskip
We start the discussion of this section by introducing notation for $E$-polynomials of various spaces of interest.
\begin{definition}
We define the following polynomials, for the appropriate ranges of  the  indices.
\begin{itemize}
\item
$gr_{k,n}(q)$ is the $E$-polynomial of the Grassmannian of dimension $k$ subspaces of $\CC^n$.
\item
$l_{k,i,n}$ is the $E$-polynomial of the variety of isotropic subspaces of dimension $2k$
for a form of rank $2i$ on a dimension $n$ space $V$.
\item
$f^\circ_{k,i,n}(q)$ is the $E$-polynomial of the intersection of the locus of skew forms of 
rank $2k$ on $V^\dual$
by the hyperplane $\alpha=0$ for a skew form $\alpha$ on $V$ of rank $2i$, for a vector space $V$ of odd dimension $n$.
\item
$f_{k,i,n}(q)$ is the weighted $E$-function of the 
 intersection of the locus of skew forms of 
rank $\leq 2k$ on $V^\dual$
by the hyperplane $\alpha=0$ for a skew form $\alpha$ on $V$ of rank $2i$.
Specifically,
\begin{equation}\label{fcirctof}
f_{k,i,n}= \sum_{1\leq p\leq k} f^\circ_{p,i,n} \prod_{j=k+1-p}^{\frac 12(n-1)-p}\frac{q^{2j}-1}{q^{2j-2p}-1}
=\sum_{1\leq p\leq k} f^\circ_{p,i,n} 
\left(
\begin{array}{c}
\frac 12(n-1)-p \\ k-p
\end{array}
\right)_{q^2}
\end{equation}
\end{itemize}
\end{definition}

We start by calculating some of these polynomials.
\begin{proposition}\label{g}
For $1\leq k\leq n$ there holds
$$
gr_{k,n} = \prod_{j=1}^k \frac {q^{n-j+1}-1}{q^{k-j+1}-1}=
\left(
\begin{array}{c}
n\\
k
\end{array}
\right)_q.
$$
\end{proposition}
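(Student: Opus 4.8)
The plan is to compute $gr_{k,n}$ by the same frame bundle technique that was used for $G(2,V)$ in Section \ref{proof}, now applied for general $k$. First I would consider the space $\mathcal{F}_{k,n}$ of ordered $k$-tuples $(v_1,\ldots,v_k)$ of linearly independent vectors in $\CC^n$, an open subvariety of $(\CC^n)^k$. Its $E$-polynomial can be computed directly by choosing the vectors one at a time: $v_1$ ranges over $\CC^n\setminus\{0\}$, then $v_2$ avoids the line $\CC v_1$, and in general $v_{j+1}$ avoids the $j$-dimensional span of $v_1,\ldots,v_j$. Since the $E$-polynomial of an affine space $\CC^d$ is $q^d$ and $E$ is additive on locally closed strata, this yields $E(\mathcal{F}_{k,n}) = \prod_{j=0}^{k-1}(q^n - q^j)$.

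Next I would observe that $\mathcal{F}_{k,n}$ is the total space of a Zariski locally trivial $GL(k,\CC)$-bundle over the Grassmannian $G(k,n)$, the map sending a frame to its span and the fiber recording the choice of basis. Local triviality holds over the standard affine charts of $G(k,n)$ exactly as in the $k=2$ case treated in Section \ref{proof}. The same direct count, applied with $n$ replaced by $k$, gives $E(GL(k,\CC)) = \prod_{j=0}^{k-1}(q^k - q^j)$. By multiplicativity of the $E$-polynomial on Zariski locally trivial fibrations we then obtain
$$
gr_{k,n} = \frac{\prod_{j=0}^{k-1}(q^n - q^j)}{\prod_{j=0}^{k-1}(q^k - q^j)}.
$$

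To finish I would extract the common factor $q^j$ from each term, writing $q^n - q^j = q^j(q^{n-j}-1)$ and $q^k - q^j = q^j(q^{k-j}-1)$. The powers of $q$ in the numerator and denominator are both $q^{0+1+\cdots+(k-1)}$, so they cancel, leaving
$$
gr_{k,n} = \frac{\prod_{j=0}^{k-1}(q^{n-j}-1)}{\prod_{j=0}^{k-1}(q^{k-j}-1)} = \prod_{j=1}^{k}\frac{q^{n-j+1}-1}{q^{k-j+1}-1}
$$
after re-indexing. Comparing with the definition of the Gaussian binomial coefficient recorded at the start of the Appendix, where each factor appears in the form $1-q^{\bullet}$ rather than $q^{\bullet}-1$, the $k$ sign changes in the numerator cancel against the $k$ sign changes in the denominator, and the two expressions agree. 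No step presents a genuine obstacle; the only care needed is the index bookkeeping in the final simplification and the observation that the $GL(k,\CC)$-fibration is Zariski locally trivial, which is standard and already invoked for $k=2$ earlier in the paper.
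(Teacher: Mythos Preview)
Your proof is correct and is precisely the computation the paper leaves to the reader: it is the direct generalization to arbitrary $k$ of the frame-bundle argument already carried out for $G(2,V)$ in Section~\ref{proof}. There is nothing to add.
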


\begin{proof}
Left to the reader.
\end{proof}

\begin{proposition}\label{l}
For all positive integers $k$, $i$, $n$ there holds
$$
l_{k,i,n} = \sum_{0\leq r\leq 2k}gr_{r,n-2i}q^{(2k-r)(n-2i-r)}
\frac{ \prod_{j=i+r+1-2k}^{i} (1-q^{2j})}{\prod_{j=1}^{2k-r} (1-q^{j})}.
$$
\end{proposition}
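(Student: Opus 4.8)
The plan is to stratify the variety of isotropic $2k$-subspaces according to how it meets the kernel of $\alpha$, and to recognize each stratum as a Zariski locally trivial fibration whose $E$-polynomial is the corresponding summand. Fix a skew form $\alpha$ of rank $2i$ on $V$ ($\dim V=n$), set $K=\ker\alpha$ of dimension $d:=n-2i$, and let $\pi\colon V\to \bar V:=V/K$ be the projection. The induced form $\bar\alpha$ is a nondegenerate symplectic form on $\bar V$ (of dimension $2i$), and the defining property of $K$ gives $\alpha(u,u')=\bar\alpha(\pi u,\pi u')$ for all $u,u'$. The crucial consequence is that a subspace $U\subseteq V$ is $\alpha$-isotropic if and only if $\pi(U)$ is $\bar\alpha$-isotropic; in particular, once $\pi(U)$ is known to be isotropic, isotropy of $U$ is automatic.

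First I would stratify by the integer $r:=\dim(U\cap K)$ (an upper semicontinuous function, so the strata are locally closed) and set $m:=2k-r=\dim\pi(U)$. On each stratum consider the map $U\mapsto (U\cap K,\ \pi(U))$ to the product of the Grassmannian $G(r,K)$ and the variety $\mathrm{IGr}(m,\bar V)$ of isotropic $m$-subspaces of $\bar V$. Given a target pair $(U_0,\bar U)$, the fibre consists of all $U\subseteq \pi^{-1}(\bar U)$ with $U\cap K=U_0$ and $\pi(U)=\bar U$; by the observation above no isotropy condition survives, so after passing to $\pi^{-1}(\bar U)/U_0$ this fibre is exactly the set of complements of $K/U_0$, a torsor under $\mathrm{Hom}(\bar U,K/U_0)\cong \AA^{m(d-r)}$. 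Thus each stratum is a fibration with affine fibre $\AA^{(2k-r)(n-2i-r)}$ over $G(r,K)\times \mathrm{IGr}(m,\bar V)$.

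Then I would compute the three $E$-polynomials and multiply them. The base factors are $gr_{r,n-2i}$ for $G(r,K)$, and for $\mathrm{IGr}(m,\bar V)$ the quantity obtained from a symplectic-frame count: the number of ordered isotropic $m$-frames is $\prod_{t=0}^{m-1}(q^{2i-t}-q^{t})$, since at step $t$ the new vector must lie in the $(2i-t)$-dimensional orthogonal of the previous (isotropic) frame but outside its span; dividing by the order of $GL(m)$ yields
$$
E\big(\mathrm{IGr}(m,\bar V)\big)=\frac{\prod_{j=i-m+1}^{i}(1-q^{2j})}{\prod_{j=1}^{m}(1-q^{j})}=\frac{\prod_{j=i+r+1-2k}^{i}(1-q^{2j})}{\prod_{j=1}^{2k-r}(1-q^{j})}.
$$
The affine fibre contributes $q^{(2k-r)(n-2i-r)}$. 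Multiplying and summing over $0\le r\le 2k$ gives the claimed formula; note that the terms with $m>i$ vanish automatically, because then the numerator product includes the factor $1-q^{0}=0$, matching the non-existence of isotropic $m$-subspaces in a symplectic space of dimension $2i$.

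The main obstacle is not the bookkeeping but verifying that these are honest $E$-polynomial identities rather than mere $\mathbb F_q$-point counts, i.e.\ establishing Zariski local triviality of the fibrations so that Hodge--Deligne $E$-polynomials are multiplicative. I would handle this exactly as in the proof of Proposition \ref{oddeven}: the stabilizer of $\alpha$ in $GL(V)$ acts on all the spaces involved, and over a big Schubert cell in each base one trivializes the fibration by the action of a suitable unipotent subgroup, then covers the base by translates of that cell. The symplectic-frame bundle over $\mathrm{IGr}(m,\bar V)$ is trivialized in the same manner using the symplectic group, and $\mathrm{IGr}(m,\bar V)$ additionally carries an algebraic cell decomposition, so its $E$-polynomial is genuinely the frame count computed above. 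A final check of the exponent arithmetic (using $d-r=n-2i-r$ and $m=2k-r$) then completes the identification.
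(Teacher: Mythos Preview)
Your proposal is correct and follows essentially the same approach as the paper: stratify by $r=\dim(U\cap\ker\alpha)$, project to a pair consisting of a point of $G(r,n-2i)$ and an isotropic $(2k-r)$-subspace of the symplectic quotient $V/\ker\alpha$, identify the fibre as an affine space of dimension $(2k-r)(n-2i-r)$, and compute the $E$-polynomial of the isotropic Grassmannian via the symplectic frame bundle, with Zariski local triviality handled by the subgroup trick. The only cosmetic difference is that the paper performs the projection in two steps (first to $U\cap K$, then to $\pi(U)$) whereas you map to the product $G(r,K)\times\mathrm{IGr}(m,\bar V)$ at once.
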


\begin{proof}
Let $w$ be the form of rank $2i$ on an $n$-dimensional space $V$. Let $N$ be the kernel of $w$. Isotropic spaces 
$V_1$ of dimension $2k$ for $w$ are first separated into a disjoint union of loci with $\dim(N\cap V_1) =r$ for $0\leq r\leq 2k$.
A choice of $V_2=N\cap V_1$ amounts to a Zariski locally trivial fibration with fiber $G(r, n-2i)$, so to prove the statement of the proposition we need to show 
that the space of isotropic subspaces $V_1$ that contain a fixed $V_2$ has $E$-polynomial given by
$$
q^{(2k-r)(n-2i-r)}
\frac{ \prod_{j=i+r+1-2k}^{i} (q^{2j}-1)}{\prod_{j=1}^{2k-r} (q^{j}-1)}.
$$
To every choice of $V_1$ we associate the corresponding space $V_3=V_1/V_2$ which is an isotropic subspace of dimension 
$2k-r$ for the nondegenerate form $w$ on $V/N$. This is a Zariski locally trivial fibration with fibers $\CC^{(2k-r)(n-2i-r)}$.
Indeed, for a given $V_3$ different lifts of $(2k-r)$ basis elements to $V$ can be (independently) changed by an element of $N/V_1$. 
Therefore, to prove the statement of the proposition, we need to show that the space of dimension $(k-2r)$ isotropic subspaces $V_3$
of  $V/N=\CC^{2i}$ equipped with a nondegenerate skew form has  $E$-polynomial 
$$
\frac{ \prod_{j=i+r+1-2k}^{i} (q^{2j}-1)}{\prod_{j=1}^{2k-r} (q^{j}-1)}.
$$
As usual, we consider the ordered bases of $V_3$. The first vector can be chosen arbitrarily. The second vector 
is perpendicular to the first one but is linearly independent from it, etc. This gives
$$
(q^{2i}-1)(q^{2i-1}-q)\cdots (q^{2i-(2k-r-1)}-q^{2k-r-1})=q^{\frac 12j(j-1)}\prod_{j=1}^{i+r-1-2k}(q^{2j}-1)
$$
which then needs to be divided by the $E$-polynomial of $GL(2k-r,\CC)$ to finish the proof.
\end{proof}

The calculation of $f_{k,i,n}$ is more complicated. We start by reversing the formula \eqref{fcirctof}.
\begin{lemma}\label{ff}
There holds 
$$
f_{k,i,n}^\circ = \sum_{1\leq j \leq k} f_{j,i,n}(-1)^{k-j}q^{(k-j)(k-j-1)}
\left(
\begin{array}{c}
\frac 12(n-1)-j \\ k-j
\end{array}
\right)_{q^2}.
$$
\end{lemma}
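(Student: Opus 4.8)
The plan is to read Lemma~\ref{ff} as the explicit inversion of the unipotent lower-triangular relation \eqref{fcirctof}. Writing $Q=q^2$ and $m=\frac12(n-1)$, and observing that $q^{(k-j)(k-j-1)}=Q^{\binom{k-j}{2}}$, the defining relation \eqref{fcirctof} becomes
$$
f_{k,i,n}=\sum_{p=1}^{k} f^\circ_{p,i,n}\binom{m-p}{k-p}_{Q},
$$
while the asserted formula becomes
$$
f^\circ_{k,i,n}=\sum_{j=1}^{k} f_{j,i,n}\,(-1)^{k-j}Q^{\binom{k-j}{2}}\binom{m-j}{k-j}_{Q}.
$$
Both sums are triangular in their indices with unit diagonal (for $p=k$ the coefficient $\binom{m-k}{0}_{Q}$ equals $1$), so the two coefficient matrices are invertible, and it suffices to check that they are mutual inverses. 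First I would substitute the first relation into the second and collect the coefficient of $f^\circ_{p,i,n}$, reducing the lemma to a single orthogonality identity.

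Concretely, setting $a=k-p$, $t=j-p$ and $M=m-p$, the required identity is
$$
\sum_{t=0}^{a}(-1)^{a-t}Q^{\binom{a-t}{2}}\binom{M}{t}_{Q}\binom{M-t}{a-t}_{Q}=\delta_{a,0}.
$$
The key step is the $q$-analogue of the ``subset of a subset'' identity $\binom{M}{t}_{Q}\binom{M-t}{a-t}_{Q}=\binom{M}{a}_{Q}\binom{a}{t}_{Q}$, which factors $\binom{M}{a}_{Q}$ out of the sum and leaves a sum depending on $a$ alone. After reindexing by $s=a-t$ and using $\binom{a}{s}_{Q}=\binom{a}{a-s}_{Q}$, the remaining factor is
$$
\sum_{s=0}^{a}(-1)^{s}Q^{\binom{s}{2}}\binom{a}{s}_{Q}=\prod_{i=0}^{a-1}(1-Q^{i}),
$$
which is the terminating $q$-binomial theorem (see \cite{GR}) evaluated at argument $1$. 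For $a\geq 1$ the factor $i=0$ contributes $1-Q^{0}=0$, so the product, hence the sum, vanishes, while for $a=0$ the empty product equals $1$; together with $\binom{M}{0}_{Q}=1$ this yields $\delta_{a,0}$ and establishes the inversion.

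The hard part is bookkeeping rather than a genuine difficulty: one must keep careful track of the shifted Gaussian binomials $\binom{m-p}{k-p}_{Q}$ and of which index the sign and the power of $Q$ attach to when composing the two triangular matrices. The only substantive inputs are the two standard $q$-identities above, both available in \cite{GR}, so no new combinatorial result is needed. I would finish by translating $Q$ and $m$ back into $q$ and $n$ to recover the displayed form of Lemma~\ref{ff}.
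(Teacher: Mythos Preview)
Your proposal is correct and follows essentially the same route as the paper: substitute the defining triangular relation \eqref{fcirctof} into the right-hand side of the claimed inverse, factor the resulting inner sum via the ``subset-of-a-subset'' $q$-binomial identity, and evaluate the remaining sum by the terminating $q$-binomial theorem at argument $1$ to obtain $(1;q^2)_{k-p}=\delta_k^p$. The only differences are cosmetic---you write $Q=q^2$ and reindex by $t=j-p$, whereas the paper reindexes by $s=k-j$ and cites \cite[Exercise~1.2(vi)]{GR} directly.
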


\begin{proof}
This follows from \eqref{fcirctof} and the standard summation formula for Gaussian binomial coefficients
known as $q$-binomial theorem.
Specifically, by  \eqref{fcirctof}   we have
\begin{align*}
& \sum_{1\leq j \leq k} f_{j,i,n}(-1)^{k-j}q^{(k-j)(k-j-1)}
\left(
\begin{array}{c}
\frac 12(n-1)-j \\ k-j
\end{array}
\right)_{q^2}\\
&
=
\sum_{1\leq j \leq k} 
\sum_{1\leq p\leq j}
 f^\circ_{p,i,n} 
  (-1)^{k-j}q^{(k-j)(k-j-1)}
\left(
\begin{array}{c}
\frac 12(n-1)-j \\ k-j
\end{array}
\right)_{q^2}
\left(
\begin{array}{c}
\frac 12(n-1)-p \\ j-p
\end{array}
\right)_{q^2}
\\
&
=
\sum_{1\leq p \leq k} 
 f^\circ_{p,i,n} 
\sum_{s=0}^{k-p}
  (-1)^{s}q^{s(s-1)}
\left(
\begin{array}{c}
\frac 12(n-1)-k+s\\ s
\end{array}
\right)_{q^2}
\left(
\begin{array}{c}
\frac 12(n-1)-p \\ k-p-s
\end{array}
\right)_{q^2}
\\
&
=
\sum_{1\leq p \leq k} 
 f^\circ_{p,i,n} 
 \left(
\begin{array}{c}
\frac 12(n-1)-p \\ k-p
\end{array}
\right)_{q^2}
\sum_{s=0}^{k-p}
  (-1)^{s}q^{s(s-1)}
\left(
\begin{array}{c}
k-p\\ s
\end{array}
\right)_{q^2}
\\
&
=
\sum_{1\leq p \leq k} 
 f^\circ_{p,i,n} 
 \left(
\begin{array}{c}
\frac 12(n-1)-p \\ k-p
\end{array}
\right)_{q^2}
(1;q^2)_{k-p} 
=f^\circ_{k,i,n}.
\end{align*}
At the end of the calculation we used \cite[Exercise 1.2(vi)]{GR} and $(1;q^2)_{k-p}=\delta_k^p$ for $k\geq p$.

\end{proof}

The following proposition contains the key geometric idea behind the calculation of this section.
\begin{proposition}\label{newrec}
For each triple of positive integers $(k,i,n)$ with $n$ odd and $k,i\leq \frac 12(n-1)$
there holds 
$$
\sum_{p=1}^k gr_{n-2k,n-2p}f^\circ_{p,i,n} = \frac {q^{2k^2-k-1}-1}{q-1}gr_{2k,n} +q^{2k^2-k-1} l_{k,i,n}.
$$
\end{proposition}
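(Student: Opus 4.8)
The plan is to realize the left-hand side as the $E$-polynomial of a single incidence variety and then to recompute that same $E$-polynomial by fibering in the opposite direction.

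First I would introduce the incidence variety
$$
\mathcal Z = \{(\CC w, U) : \CC w \in \PP\Lambda^2 V,\ \langle w,\alpha\rangle = 0,\ U\subseteq {\rm Ker}(w)\subset V^\dual,\ \dim U = n-2k\},
$$
where $w$ is a skew form on $V^\dual$ (so $w\in\Lambda^2 V$), its kernel ${\rm Ker}(w)\subset V^\dual$ is scale invariant and hence well defined, and $\alpha$ is the fixed form of rank $2i$ on $V$. Projecting $\mathcal Z$ onto $\CC w$ gives, over the locus $f^\circ_{p,i,n}$ of forms of rank exactly $2p$, a fibration with fiber $G(n-2k,{\rm Ker}(w))\cong G(n-2k,n-2p)$. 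This is the restriction of a tautological Grassmannian bundle, hence Zariski locally trivial, so by multiplicativity of $E$ we obtain $E(\mathcal Z)=\sum_{p=1}^{k} gr_{n-2k,n-2p}\,f^\circ_{p,i,n}$, which is exactly the left-hand side. (Since a nonzero $w$ has rank between $2$ and $2k$, the sum ranges precisely over $p=1,\ldots,k$.)

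Next I would recompute $E(\mathcal Z)$ by projecting onto $U$, landing in $G(n-2k,V^\dual)\cong G(2k,V)$ via the duality $U\mapsto U^\perp$, where $U^\perp\subset V$ is the $2k$-dimensional annihilator of $U$. The condition $U\subseteq {\rm Ker}(w)$ is equivalent to $w\in\Lambda^2 U^\perp$, so for fixed $U$ the form $w$ ranges over $\PP\Lambda^2 U^\perp\cong\PP^{2k^2-k-1}$ (the rank bound being automatic, as $\dim\Lambda^2 U^\perp = 2k^2-k$), subject only to $\langle w,\alpha\rangle=0$. For $w\in\Lambda^2 U^\perp$ this pairing equals $\langle w,\alpha|_{U^\perp}\rangle$ and depends only on the restriction $\alpha|_{U^\perp}\in\Lambda^2(U^\perp)^\dual$. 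I would then stratify the base $G(2k,V)$ into the locus where $U^\perp$ is isotropic for $\alpha$, i.e. $\alpha|_{U^\perp}=0$, over which the linear condition is vacuous and the fiber is the full $\PP^{2k^2-k-1}$, and its complement, over which $\alpha|_{U^\perp}$ is a nonzero covector cutting out a hyperplane, so the fiber is $\PP^{2k^2-k-2}$. By definition the isotropic locus is the variety whose $E$-polynomial is $l_{k,i,n}$, while the whole base contributes $gr_{2k,n}$, so the complement contributes $gr_{2k,n}-l_{k,i,n}$. All of these are projectivizations of (sub)bundles of $\Lambda^2\mathcal U^\perp$ over the Grassmannian, hence Zariski locally trivial, and multiplicativity of $E$ gives
$$
E(\mathcal Z) = (gr_{2k,n}-l_{k,i,n})\frac{q^{2k^2-k-1}-1}{q-1} + l_{k,i,n}\,\frac{q^{2k^2-k}-1}{q-1}.
$$
A one-line simplification using $\frac{q^{2k^2-k}-1}{q-1}-\frac{q^{2k^2-k-1}-1}{q-1}=q^{2k^2-k-1}$ rewrites the right-hand side as $\frac{q^{2k^2-k-1}-1}{q-1}gr_{2k,n}+q^{2k^2-k-1}l_{k,i,n}$, matching the claim.

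The dimension counts and the final simplification are routine; the one genuinely delicate point is the Zariski local triviality of the fibrations appearing in both directions. For the projection to $\CC w$ this is the tautological Grassmannian-bundle structure over each rank stratum, which can be trivialized by the symplectic-frame-bundle argument already used in the proofs of Propositions \ref{oddeven} and in Section \ref{proof}. For the projection to $U$, the key observation is that over the non-isotropic stratum $\alpha$ determines a nowhere-vanishing section of $(\Lambda^2\mathcal U^\perp)^\dual$, so the fiberwise hyperplane is the projectivization of the kernel subbundle of a surjection $\Lambda^2\mathcal U^\perp\to\mathcal O$, which is locally trivial; over the isotropic stratum one simply has the full projective bundle. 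I expect verifying these trivializations to be the main work, but no new ideas beyond those already present in Sections \ref{sec4} and \ref{proof} should be required.
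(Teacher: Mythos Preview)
Your proposal is correct and is essentially the same argument as the paper's: the incidence variety $\mathcal Z$ is exactly the hypersurface $H_\alpha$ in the projective bundle $\PP\Lambda^2 Q^\dual\to G(n-2k,V^\dual)$, and your two projections (to $\CC w$ and to $U\leftrightarrow U^\perp$) are precisely the paper's two projections to $Pf(2k,V^\dual)$ and to the Grassmannian. Your treatment of Zariski local triviality is in fact more detailed than the paper's, which simply asserts it.
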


\begin{proof}
Let $\alpha$ be a skew form of rank $2i$ on $V$.
Consider the space of pairs $(w,V_1)$ where $w$ is a form of rank at most $2k$ on $V^\dual$ and
$V_1$ is a $(n-2k)$-dimensional subspace in the kernel of $w$. This is the projective bundle $\PP\Lambda^2 Q^\dual$ over $G(n-2k,V^\dual)$
of relative skew forms on the universal quotient bundle of $G(n-2k,V^\dual)$. This is
 a (non-log) resolution of $Pf(2k,V^\dual)$ with the map defined by forgetting the space $V_1$. Consider the hypersurface $H_\alpha\in\PP\Lambda^2 Q^\dual$  defined by 
$\langle w,\alpha\rangle=0$. We can calculate the $E$-function of $H_\alpha$ in two ways which will lead to the statement of the proposition.

\smallskip
On the one hand, consider the projection of $H_\alpha$ to the Pfaffian $Pf(2k,V^\dual)$. The fiber over the locus $Pf^\circ(2p,V^\dual)$ is given by 
$G(n-2k,n-2p)$. It is a Zariski locally trivial fibration over the locus of forms in $Pf^\circ(2p,V^\dual)$ that are orthogonal to $\alpha$.
Thus, we get 
$$
E(H_\alpha)=\sum_{p=1}^k gr_{n-2k,n-2p}f^\circ_{p,i,n}.
$$
On the other hand, consider the projection of $H_\alpha$ to the Grassmannian $G(n-2k,n)$. The fiber over a point $V_1$ is either the projective space 
$\PP^{2k^2-k-1}$ or a hyperplane in it, depending on whether or not $Ann(V_1)$ is an isotropic space for $\alpha$. Thus we get
$$
E(H_\alpha)=(gr_{n-2k,n}-l_{k,i,n})E(\PP^{2k^2-k-2}) + l_{k,i,n}E(\PP^{2k^2-k-1})
$$
which equals to the right hand side of the equation of the proposition.
\end{proof}

We are now able to exhibit a recursive formula for $f_{k,i,n}$.
\begin{proposition}\label{newcor}
For every triple of positive integers $(k,i,n)$ with odd $n$ and $k,i\leq \frac 12(n-1)$ there holds
$$
\sum_{j=1}^k f_{j,i,n} q^{2(k-j)^2-(k-j)}
\frac
{(1-q^{n+1-2k})}
{(1-q^{n+1-2j})}
\frac {(q^{n+3-4k+2j};q^2)_{2k-2j}}{(q;q)_{2k-2j}}
$$
$$
= \frac {q^{2k^2-k-1}-1}{q-1}gr_{2k,n} +q^{2k^2-k-1} l_{k,i,n}.
$$
This equation determines $f_{k,i,n}$ uniquely. 
\end{proposition}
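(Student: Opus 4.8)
The plan is to derive Proposition \ref{newcor} directly from the geometric recursion of Proposition \ref{newrec} together with the inversion formula of Lemma \ref{ff}. The left-hand side of Proposition \ref{newrec} is a sum of $gr_{n-2k,n-2p}\,f^\circ_{p,i,n}$, whereas Proposition \ref{newcor} is a statement about the $f_{j,i,n}$; the two are linked because $f^\circ$ and $f$ are mutually inverse Gaussian-binomial transforms in base $q^2$ (compare \eqref{fcirctof} with Lemma \ref{ff}). Concretely, I would substitute the formula of Lemma \ref{ff} for each $f^\circ_{p,i,n}$ into the left-hand side of Proposition \ref{newrec}. This produces a double sum over the triangular range $1\le j\le p\le k$, which I would then reorganize by fixing $j$ and summing over $p$. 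The right-hand side of Proposition \ref{newrec} is left untouched, so the whole statement reduces to checking, for each fixed $j$, that the coefficient of $f_{j,i,n}$ produced by the interchange equals
$$
q^{2(k-j)^2-(k-j)}\,\frac{1-q^{n+1-2k}}{1-q^{n+1-2j}}\,\frac{(q^{n+3-4k+2j};q^2)_{2k-2j}}{(q;q)_{2k-2j}}.
$$

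Setting $d=k-j$ and $m=p-j$, and using Proposition \ref{g} to write $gr_{n-2k,n-2p}$ as the base-$q$ Gaussian binomial $\binom{n-2j-2m}{2d-2m}_{q}$, this coefficient is the terminating sum
$$
\sum_{m=0}^{d}\binom{n-2j-2m}{2d-2m}_{q}\,(-1)^{m}\,q^{m(m-1)}\,\binom{\tfrac12(n-1)-j}{m}_{q^2}.
$$
I would evaluate it by recasting it as a basic hypergeometric series via Remark \ref{tohyper}. The defining feature of the sum, and the reason it is the crux of the argument, is that one Gaussian binomial is taken in base $q$ and the other in base $q^2$, so after forming the ratio of consecutive terms the series is of \emph{quadratic} type rather than an ordinary ${}_r\phi_s$ in a single base. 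I expect this to be the main obstacle, and I would resolve it by appealing to Jain's quadratic summation/transformation identity \cite{Jain}, supplemented by the $q$-Chu--Vandermonde summation of \cite{GR} to dispose of the degenerate end factors; after simplifying the resulting $q$-Pochhammer products and re-expressing $\tfrac12(n-1)-j$ in terms of $n$, $j$, $k$, this collapses to the displayed coefficient.

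Finally, for the uniqueness assertion I would isolate the $j=k$ term. There $d=0$, so $q^{2(k-j)^2-(k-j)}=1$, the ratio $\frac{1-q^{n+1-2k}}{1-q^{n+1-2j}}$ equals $1$, and both $q$-Pochhammer symbols $(q^{n+3-4k+2j};q^2)_{2k-2j}$ and $(q;q)_{2k-2j}$ reduce to empty products equal to $1$; hence the coefficient of $f_{k,i,n}$ is exactly $1$. Since all remaining coefficients involve only $f_{j,i,n}$ with $j<k$, the identity expresses $f_{k,i,n}$ as the right-hand side minus a known combination of lower terms. Together with the base case $k=1$, where the sum has the single term $f_{1,i,n}$ with coefficient $1$, this determines $f_{k,i,n}$ uniquely and recursively. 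The inputs $gr_{2k,n}$ and $l_{k,i,n}$ on the right-hand side are given explicitly by Propositions \ref{g} and \ref{l}, so the recursion is fully effective.
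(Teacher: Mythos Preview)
Your overall strategy coincides with the paper's: substitute Lemma \ref{ff} into Proposition \ref{newrec}, interchange the sums, and identify the coefficient of each $f_{j,i,n}$. The uniqueness argument is also identical. The difference lies in how you propose to evaluate the inner sum
\[
\sum_{m=0}^{d}\binom{2b+1-2m}{2d-2m}_{q}\,(-1)^{m}\,q^{m(m-1)}\,\binom{b}{m}_{q^2},
\qquad d=k-j,\ b=\tfrac12(n-1)-j.
\]
You diagnose this as a \emph{quadratic} hypergeometric series because the two binomials live in different bases, and you plan to invoke Jain's identity. This misreads the structure. When you form the ratio $c_{m+1}/c_m$ and write everything in terms of $x=q^{2m}$, the base-$q$ binomial $\binom{2b+1-2m}{2d-2m}_q$ contributes four linear factors, but one of them, $(1-q^{2b}x^{-1})$, cancels exactly against the numerator of the base-$q^2$ binomial $\binom{b}{m}_{q^2}$. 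What remains is
\[
\frac{c_{m+1}}{c_m}=q^{4d-2b-2}\,\frac{(1-q^{-2d}x)(1-q^{1-2d}x)}{(1-q^2x)(1-q^{-2b-1}x)},
\]
a rational function of $x=q^{2m}$ alone. So the sum is an honest terminating ${}_2\phi_1$ in base $q^2$, not a quadratic series, and it is dispatched by Heine's summation \cite[eq.\ 1.5.2]{GR} (equivalently $q$-Gauss), which the paper uses. Jain's identity is not needed here; it enters later, in the proof of Proposition \ref{app.main}.

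This is not a fatal gap---your plan would presumably still converge on the answer once you actually compute the term ratio---but the anticipated ``main obstacle'' dissolves, and the resolution you outline (Jain plus $q$-Chu--Vandermonde) is both unnecessary and not obviously applicable in the form you sketch.
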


\begin{proof}
We combine  Proposition \ref{newrec} and Lemma \ref{ff}
to get
$$
\sum_{j=1}^k f_{j,i,n}\left(\sum_{j\leq p\leq k}
 (-1)^{p-j}q^{(p-j)(p-j-1)}
\left(
\begin{array}{c}
n-2p \\ n-2k
\end{array}
\right)_{q}
\left(
\begin{array}{c}
\frac 12(n-1)-j \\ p-j
\end{array}
\right)_{q^2}
\right)
$$
$$
= \frac {q^{2k^2-k-1}-1}{q-1}gr_{2k,n} +q^{2k^2-k-1} l_{k,i,n}.
$$
Then the equation of the proposition follows from
\begin{equation}\label{hj}
\sum_{s=0}^a(-1)^sq^{s^2-s}
\left(
\begin{array}{c}
2b+1-2s\\2a-2s
\end{array}
\right)_q
\left(
\begin{array}{c}
b\\s
\end{array}
\right)_{q^2}
=
q^{2a^2-a}\frac{(1-q^{2b-2a+2})}{(1-q^{2b+2})}\frac{(q^{2b-4a+4};q^2)_{2a}}{(q;q)_{2a}}
\end{equation}
for $a=k-j$ and $b=\frac 12(n-1)-j$. 
The formula \eqref{hj} is proved by rewriting the left hand side as the basic hypergeometric function
along the lines of Remark \ref{tohyper} by observing that the ratio of consecutive terms  
can be written in the form
$$
\frac {c_{s+1}}{c_s} 
=q^{4a-2b-2}\frac{(1-q^{2s-2a})(1-q^{2s-2a+1})}{(1-q^{2s+2})(1-q^{2s-2b-1})}.
$$ 
Thus, we can simplify the left hand side of \eqref{hj} to
$$
\left(
\begin{array}{c}
2b+1\\2a
\end{array}
\right)_q
{~}_2\phi_1
\left(
\begin{array}{cc}
q^{-2a},q^{-2a+1}\\
q^{-2b-1}
\end{array};q^2,q^{4a-2b-2}
\right).
$$
We then employ  \cite[equation 1.5.2]{GR}, which is a particular case of Heine's analog 
of Gauss'  summation formula.
We thank Hjalmar Rosengren for pointing out this 
simplification.

\smallskip
Uniqueness follows from the fact that the coefficient by $f_{k,i,n}$ 
is $1$,  so the equations allow us to solve for $f_{1,i,n}$, then $f_{2,i,n}$, and so on.
\end{proof}

The most important result of this section is the formula for $f_{k,i,n}$ used in the proof of Theorem \ref{main.main}.
We thank Hjalmar Rosengren who observed that the identity we needed to prove follows from the Jain identity with
a particular choice of parameters.
\begin{proposition}\label{app.main}
There holds 
$$f_{k,i,n} =
\frac {q^{nk-1}-1}{q-1}\prod_{j=k+1}^{\frac 12(n-1)} \frac {q^{2j}-1}{q^{2j-2k}-1}
+q^{nk-1}\prod_{j=\frac 12(n-1)-k-i+1}^{\frac 12(n-1)-i} \frac{q^{2j}-1}{q^{2j-n+1+2k+2i}-1}.
$$
\end{proposition}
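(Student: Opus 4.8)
The plan is to invoke the uniqueness clause of Proposition \ref{newcor}: the recursion stated there has leading coefficient $1$ (the $j=k$ term) and therefore determines $f_{k,i,n}$ uniquely, so it suffices to check that the claimed closed formula satisfies that recursion. Write the proposed expression as $\hat f_{k,i,n}=A_k+q^{nk-1}B_{k,i}$, where $A_k=\frac{q^{nk-1}-1}{q-1}\prod_{j=k+1}^{\frac12(n-1)}\frac{q^{2j}-1}{q^{2j-2k}-1}$ is independent of $i$ and $B_{k,i}=\prod_{j=\frac12(n-1)-k-i+1}^{\frac12(n-1)-i}\frac{q^{2j}-1}{q^{2j-n+1+2k+2i}-1}$ carries all of the $i$-dependence. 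Substituting $\hat f_{j,i,n}$ into the left-hand side of Proposition \ref{newcor} and recalling that the recursion kernel
$$c_{j,k}=q^{2(k-j)^2-(k-j)}\frac{1-q^{n+1-2k}}{1-q^{n+1-2j}}\frac{(q^{n+3-4k+2j};q^2)_{2k-2j}}{(q;q)_{2k-2j}}$$
does not depend on $i$, the identity to be checked splits, along the $i$-free versus $i$-dependent summands of the right-hand side of Proposition \ref{newcor}, into two separate identities whose sum is the full recursion.

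First I would verify the $i$-free identity $\sum_{j=1}^k A_j\,c_{j,k}=\frac{q^{2k^2-k-1}-1}{q-1}gr_{2k,n}$. After clearing the products of $q$-Pochhammer symbols this is a terminating balanced $q$-sum, and it should succumb to the same $q$-binomial summation techniques used in Lemma \ref{ff} (a $q$-Chu--Vandermonde type evaluation), or to the degenerate case of the Jain identity invoked below. The fact that $A_k$ is, up to the shift $nk\mapsto nk-1$, exactly the stringy $E$-function of $Pf(2k,V)$ from Theorem \ref{pfst2k} makes this the expected ``uncut'' contribution and provides a useful consistency check on the bookkeeping.

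The substance of the argument is the $i$-dependent identity $\sum_{j=1}^k q^{nj-1}B_{j,i}\,c_{j,k}=q^{2k^2-k-1}\,l_{k,i,n}$, with $l_{k,i,n}$ given explicitly by Proposition \ref{l}. I would form the ratio of consecutive terms of the left-hand sum (following Remark \ref{tohyper}) and expand $l_{k,i,n}$ in $q$-Pochhammer and Gaussian binomial symbols, so that the identity becomes an equality of terminating basic hypergeometric series in the mixed bases $q$ and $q^2$. As Rosengren observed, the resulting sum is precisely an instance of Jain's identity \cite{Jain} once the parameters and argument are specialized in the manner already illustrated by the reduction of \eqref{hj} to \cite[equation 1.5.2]{GR} in the proof of Proposition \ref{newcor}, with $a=k-j$, $b=\frac12(n-1)-j$ and the corresponding shifts induced by $i$.

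The hard part will be the purely $q$-hypergeometric bookkeeping of this last step: massaging $l_{k,i,n}$ together with the kernel $c_{j,k}$ into a single standard ${}_r\phi_s$ whose base, argument, and numerator/denominator parameters match Jain's identity \emph{exactly}, while tracking the half-integer shifts and the interplay of the $q$- and $q^2$-bases so that the specialization reproduces the product $q^{nk-1}B_{k,i}$ rather than a transformed but unequal expression. Once the parameter matching is confirmed, Jain's identity evaluates the sum in closed form, the two pieces reassemble into $\hat f_{k,i,n}=f_{k,i,n}$, and the proposition follows.
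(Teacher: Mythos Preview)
Your overall strategy matches the paper's exactly: invoke the uniqueness in Proposition \ref{newcor}, split the candidate $\hat f_{k,i,n}$ into its $i$-independent and $i$-dependent pieces, and verify the two resulting identities separately against $\frac{q^{2k^2-k-1}-1}{q-1}gr_{2k,n}$ and $q^{2k^2-k-1}l_{k,i,n}$ respectively. The paper carries out precisely this plan, converting each side to a basic hypergeometric series via Remark \ref{tohyper}; the $i$-free piece is settled by Heine's summation \cite[1.5.2--1.5.3]{GR} (your guess of $q$-Chu--Vandermonde is in the right ballpark).

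One point where you are too optimistic: Jain's identity alone does not finish the $i$-dependent part. In the paper, Jain's equation is applied to the $l_{k,i,n}$ side (which is a ${}_3\phi_1$) to transform it into a ${}_3\phi_2$, while the $\sum_j q^{nj-1}B_{j,i}\,c_{j,k}$ side is already a ${}_3\phi_2$ but with different parameters; a further ${}_3\phi_2$ transformation \cite[III.13]{GR} is needed on that side before the two expressions can be compared. So the ``parameter matching'' you flag as the hard part genuinely requires a second nontrivial transformation formula in addition to Jain, and Jain does not produce a closed-form product but rather another hypergeometric series. This is a real missing step in your sketch, though once you know to look for it the bookkeeping is straightforward.
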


\begin{proof}
In view of Proposition \ref{newcor} it is enough to show that the above formula for $f_{k,i,n}$ fits into the 
the equation of the proposition. In view of Propositions \ref{g} and \ref{l}, this amounts to
a certain $q$-hypergeometric identity. We observe that the above formula for $f_{k,i,n}$ can be 
extended to $f_{0,i,n}$ to give $0$. Then the identity of Proposition \ref{newcor} is
$$
A_{k,n}+B_{k,i,n}=C_{k,n}+D_{k,i,n}
$$
where
\begin{align*}
&A_{k,n}=\sum_{j=0}^k 
\frac {q^{nj-1}-1}{q-1}
\left(\begin{array}{c}
\frac 12(n-1)\\ j
\end{array}
\right)_{q^2}
q^{2(k-j)^2-(k-j)}
\frac
{(1-q^{n+1-2k})}
{(1-q^{n+1-2j})}
\frac {(q^{n+3-4k+2j};q^2)_{2k-2j}}{(q;q)_{2k-2j}}
\\
&
B_{k,i,n}
=\sum_{j=0}^k
\left(
\begin{array}{c}
\frac 12(n-1)-i\\ j
\end{array}
\right)_{q^2}
q^{2(k-j)^2-(k-j)+nj-1}
\frac
{(1-q^{n+1-2k})}
{(1-q^{n+1-2j})}
\frac {(q^{n+3-4k+2j};q^2)_{2k-2j}}{(q;q)_{2k-2j}}\\
\\
&
C_{k,n}
= \frac {q^{2k^2-k-1}-1}{q-1}
\left(
\begin{array}{c}
 n\\ 2k
\end{array}
\right)_{q}
\\
&
D_{k,i,n}=q^{2k^2-k-1} 
\sum_{0\leq r\leq 2k}
\left(
\begin{array}{c}
n-2i\\
r
\end{array}
\right)_{q}
q^{(2k-r)(n-2i-r)}
\frac{ \prod_{j=i+r+1-2k}^{i} (1-q^{2j})}{\prod_{j=1}^{2k-r} (1-q^{j})}.
\end{align*}

We claim that 
$$A_{k,n}=C_{k,n},~~B_{k,i,n}=D_{k,i,n}.$$
To prove the first identity, we switch to the summation over $s=k-j$ in $A_{k,n}$ and put it into
$q$-hypergeometric form with base $q^2$ along the lines of Remark \ref{tohyper} to get
\begin{align*}
\begin{split}
&(1-q)A_{k,n}
=
{\left(
\begin{array}{c}
\frac 12(n-1)\\
k
\end{array}
\right)_{q^2}}
\left(
{~}_2\phi_1
\left(
\begin{array}{c}
q^{-2k},~~q^{-n-1+2k}\\
\hskip -30pt q
\end{array}
;q^2,q^{n+2}
\right)
-
q^{nk-1}\cdot
\right.
\\
&\left.
\cdot
{~}_2\phi_1
\left(
\begin{array}{c}
q^{-2k},~~q^{-n-1+2k}\\
\hskip -30pt q
\end{array}
;q^2,q^{2}
\right)
\right)
=(1-q^{2k^2-k-1})
{\left(
\begin{array}{c}
\frac 12(n-1)\\
k
\end{array}
\right)_{q^2}}
\frac{(q^{n+2-2k};q^2)_k}{(q;q^2)_k}
\end{split}
\end{align*}
by using Heine's formulas \cite[equations 1.5.2-1.5.3]{GR}. This then implies $A_{k,n}=C_{k,n}$ by a straightforward calculation which we
leave to the reader.

\smallskip
Since $B_{k,i,n}$ and $D_{k,i,n}$ are rational functions of $q^n$, it suffices to verify that they are equal for 
all sufficiently large $n$ for fixed $k$ and $i$. We have 
$$
B_{k,i,n} = q^{2k^2-k-1}
\frac{(1-q^{n+1-2k})(q^{n+3-4k};q^2)_{2k}}{(1-q^{n+1})(q;q)_{2k}}
{~}_3\phi_2
\left(
\begin{array}{c}
q^{-2k},q^{1-n+2i},q^{1-2k}\\
q^{1-n},  q^{n+3-4k}
\end{array}
;q^2,q^{n+2-2i}
\right).
$$
Again, the easiest way to prove this is by comparing the ratio of the terms for $j+1$ and $j$  to 
the corresponding ratio of the $q$-hypergeometric series in Remark \ref{tohyper}.
We then use the transformation formula for ${~}_3\phi_2$ \cite[Appendix, equation III.13]{GR}
with parameters $(b,c,d,e,n,q)\to (q^{-n+1+2i},q^{1-2k}, q^{1-n}, q^{n+3-4k},k, q^2)$
to  rewrite $B_{k,i,n}$ as
$$
B_{k,i,n} =  q^{2k^2-k-1}
\frac{(1-q^{n+1-2k})(q^{n+3-4k};q^2)_{2k}}{(1-q^{n+1})(q;q)_{2k}}\cdot
$$
$$
\cdot \frac{(q^{n+2-2k};q^2)_k}{(q^{n+3-4k};q^2)_k}
{~}_3\phi_2
\left(
\begin{array}{c}
q^{-2k},q^{1-2k},q^{-2i}\\
q^{1-n},q^{-n}
\end{array}
;q^2,q^2
\right).
$$

\smallskip
Similarly, for large enough $n$ we have  (by switching from $r$ to $(2k-r)$ in the summation)
$$
D_{k,i,n}=
q^{2k^2-k-1}
\left(
\begin{array}{c}
n-2i\\
2k
\end{array}
\right)_{q}
{~}_3\phi_1
\left(
\begin{array}{c}
q^{-2k},~~~q^{-i},~~-q^{-i}\\
 q^{n+1-2i-2k}
\end{array}
;q,-q^{n+1}
\right)
.
$$
We use Jain's equation
 \cite{Jain}, \cite[Exercise 3.4]{GR} 
with the parameters 
$
(a,b,d,n,q) \to (q^{-2i},q^{-i},q^{-n},2k,q)
$
to rewrite
$$
D_{k,i,n}=q^{2k^2-k-1}
\left(
\begin{array}{c}
n-2i\\
2k
\end{array}
\right)_{q}
\frac
{(q^{-n};q)_{2k}}
{(q^{-n+2i};q)_{2k}}
q^{4ki}
{~}_3\phi_2
\left(
\begin{array}{c}
q^{-2k},~~~q^{1-2k},~~q^{-2i}\\
 q^{-n},~~~q^{1-n}
\end{array}
;q^2,q^2
\right)
$$
$$
=q^{2k^2-k-1}
\left(
\begin{array}{c}
n\\
2k
\end{array}
\right)_{q}
{~}_3\phi_2
\left(
\begin{array}{c}
q^{-2k},~~~q^{1-2k},~~q^{-2i}\\
 q^{-n},~~~q^{1-n}
\end{array}
;q^2,q^2
\right)
.
$$
Thus, it remains to verify that 
$$
\frac{(1-q^{n+1-2k})(q^{n+3-4k};q^2)_{2k}}{(1-q^{n+1})(q;q)_{2k}}
 \frac{(q^{n+2-2k};q^2)_k}{(q^{n+3-4k};q^2)_k}
=
\left(
\begin{array}{c}
n\\
2k
\end{array}
\right)_{q}
$$
which is a straightforward calculation left to the reader.
\end{proof}

\end{document}